\theoremstyle{plain}
\newtheorem{theorem}{Theorem}[section]
\newtheorem{propo}[theorem]{Proposition}
\newtheorem{lemma}[theorem]{Lemma}
\newtheorem{assumption}{Assumption}
\theoremstyle{definition}
\crefname{assumption}{assumption}{assumptions}
\theoremstyle{remark}
\newtheorem{example}[theorem]{Example}
\global\long\def\esp{\mathbb{E}}%
\global\long\def\F{\mathcal{F}}%
\global\long\def\R{\mathbb{R}}%
\global\long\def\P{\mathbb{P}}%
\newcommand{\ind}{\mathds{1}}
\newcommand{\imp}{\rm{imp}}
\definecolor{OliveGreen}{rgb}{0.24, 0.71, 0.54}
\definecolor{RoyalBlue}{rgb}{0.0, 0.47, 0.75}
\definecolor{BrickRed}{rgb}{0.77, 0.12, 0.23}
\definecolor{Vert}{RGB}{0,128,0}
\title{Random features models: a way to study the success of naive imputation}
\author[1]{Alexis Ayme}
\author[1,2]{Claire Boyer}
\author[3]{Aymeric Dieuleveut}
\author[1]{Erwan Scornet}
\affil[1]{Sorbonne Université and Université Paris Cité, CNRS, Laboratoire de Probabilités, Statistique et Modélisation, F-75005 Paris, France}
\affil[2]{Institut Universitaire de France (IUF)}
\affil[3]{CMAP, Ecole Polytechnique}
\date{}    
\begin{document}

  \maketitle

\begin{abstract}
Constant (naive) imputation is still widely used in practice as this is a first easy-to-use technique to deal with missing data. 
Yet, this simple method could be expected to induce a large bias for prediction purposes, as the imputed input may strongly differ from the true underlying data. 
However, recent works suggest that this bias is low in the context of high-dimensional linear predictors when data is supposed to be missing completely at random (MCAR). 
This paper completes the picture for linear predictors by confirming the intuition that the bias is negligible  and  that surprisingly naive imputation also remains relevant in very low dimension.
To this aim, we consider a unique underlying random features model, which offers a rigorous framework for studying predictive performances, whilst the dimension of the observed features varies.
Building on these theoretical results, we establish finite-sample bounds on stochastic gradient (SGD) predictors applied to zero-imputed data, a strategy particularly well suited for large-scale learning.
If the MCAR assumption appears to be strong, we show that similar favorable behaviors occur for more complex missing data scenarios.
\end{abstract}

\section{Introduction}

Missing data appear in most real-world datasets as they arise from merging different data sources, data collecting issues, self-censorship in surveys, just to name a few. Specific handling techniques are required, as most machine learning algorithms do not natively handle missing values. A common practice consists in imputing missing entries. The resulting complete dataset can then be analyzed using any machine learning algorithm. 

While there exists a variety of imputation strategies \citep[single, multiple, conditional, marginal imputation~; see, e.g.,][for an overview]{bertsimas2018predictive}, mean imputation is definitely one of the most common practices. Such a procedure has been largely criticized in the past as (single) mean imputation distorts data distributions by lowering variances, which can lead to inconsistent parameter estimation. Indeed, a large part of the literature on missing values focuses on inference in parametric models, such as linear \citep[][]{little1992regression, jones1996indicator} or logistic models \citep[][]{consentino2011missing, jiang2020logistic}. 
From an empirical perspective, benchmarks of imputation techniques \citep{woznica2020does} indicate that simple imputation, such as the mean, induces reasonable predictive performances, compared to more complex imputation techniques such as MICE \citep[][]{perez2022benchmarking}.  

On the contrary, a recent line of work \citep[][]{josse2019consistency} aims at studying the predictive performances of impute-then-regress strategies that work by first imputing data (possibly with a very simple procedure) and then fitting a learning algorithm on the imputed dataset. Whereas mean imputation leads to inconsistent \textit{parameter estimation}, \citet{josse2019consistency} and \citet{bertsimas2021beyond} show that impute-then-regress procedures happen to be consistent if the learning algorithm is universally consistent. 
\citet{le2021sa} generalize the consistency results of mean-imputation by \citet{josse2019consistency, bertsimas2021beyond} and prove that for any universally consistent regression model, almost all single imputation strategies can lead to consistent predictors. Therefore, the impact of a specific imputation strategy has to be analyzed for specific regression models.

Without dispute, linear models are the most classic regression framework. However, their study becomes challenging in presence of missing values as they can require to build $2^d$ non-linear regression models (one for each missing data pattern), where $d$ is the number of input variables \citep[][]{le2020linear,ayme2022near}. In the context of linear models with missing inputs, \citet{le2020linear} establish finite-sample generalization bounds for zero-imputation, showing that this strategy is generally inconsistent. However, assuming a low-rank structure on the input variables, \citet{ayme2023naive} prove that zero-imputation prior to learning is consistent in a high-dimensional setting. Note that the impact of zero-imputation with low-rank inputs has also been analyzed by \citet{agarwal2019robustness} in the context of Principal Components Regression, where the same type of generalization bounds were established. In this paper, we want to go beyond the low-rank structure by considering a (possibly infinite) latent space, and using the random feature framework.

\paragraph{Related work - Random features}
First introduced by \citet{rahimi2007random}, random features are used in nonparametric regression to approximate, with a few features, a kernel method where the final predictor belongs to an infinite-dimensional RKHS. \citet{rudi2017generalization,carratino2018learning} obtain generalization upper bounds for kernel regression learned with a small number of features, leading to computational efficiency. Random features are also used to describe a one-hidden-layer neural network \citep{bach2017breaking}. 

\paragraph{Related work - high-dimensional linear models}
Linear models have been widely studied in a fixed design, considering the input variables are fixed \citep[see, e.g., ][for an analysis in the high-dimensional case]{hastie2015statistical}. Quite notably, few works analyze (high-dimensional) linear models in the random design setting, a necessary framework to assess the predictive performance of linear models on unseen data \citep{caponnetto2007optimal,hsu2012random,mourtada2022elementary}.
These works mainly focus on the statistical properties of the Empirical Risk Minimizer (ERM) with a ridge regularization using uniform concentration bounds.
On the other hand,
\cite{bach2013non,raskutti2014early,dieuleveut2017harder} 
directly control the  generalization error of  predictors resulting of stochastic gradient strategies, while performing a single pass on the dataset. The obtained bounds have therefore the advantage of being dependent on the training algorithm involved. 


        

\paragraph{Contributions} In this paper, we analyze the impact of the classic imputation-by-zero procedure on predictive performances, as a function of the input dimension. To this aim, we consider a latent space from which an arbitrary number of input variables are built. The output variable is assumed to result from a linear transformation of the latent variable. Such a framework allows us to analyze how predictive performances vary with the number of input variables, inside a common fixed model. Under this setting, we assume that all entries of input variables are observed with probability $\rho \in (0,1)$, within a MCAR scenario, and study the performance of a linear model trained on imputed-by-zero data. 
\begin{itemize}
    \item We prove that when the input dimension $d$ is negligible compared to that of the latent space $p$, the Bayes risk of the zero-imputation strategy is negligible compared to that induced by missing data themselves. Therefore, naive imputation is on par with best strategies. 
    \item When $d \gg p$, both above errors vanish, which highlights that neither the presence of missing data or the naive imputation procedure hinders the predictive performances.
    \item From a learning perspective, we use Stochastic Gradient Descent to learn parameters on zero-imputed data. We provide finite-sample generalization bounds in different regimes, highlighting that the excess risk vanishes at $1/\sqrt{n}$ for very low dimensions ($d \ll p$) and high dimensions ($d > (1 - \rho) \sqrt{n}/ \rho$). 
    \item Two different regimes arise from the finite dimension of the latent space. To move beyond this disjunctive scenario, we consider a latent space of infinite dimension and analyze predictors built on $d$ zero-imputed input variables. We prove that the corresponding Bayes excess risk is controlled via the excess risk of a kernel ridge procedure, with a penalization constant depending on $\rho$ and $d$. A finite-sample generalization bound on the SGD strategy applied on zero-imputed data is established and shows that zero-imputation is consistent in high-dimensional regimes ($d \gg \sqrt{n}$).  
    \item The MCAR assumption considered throughout the paper, and often in the literature, can be attenuated at the cost of weaker theoretical results but which allows to show that naive imputation is relevant in high dimension even for non-trivial Missing Not At Random (MNAR) scenarios.
\end{itemize}

\textbf{Notations. } For $n\in \mathbb N$, we denote $[n] = \lbrace 1, \dots, n\rbrace$. We use $\lesssim$ to denote inequality up to a universal constant.



\section{Imputation is adapted for very low and high-dimensional data}
\label{sec:DD}

\subsection{Setting}

We adopt the classical regression framework in which we want to predict the value of an output random variable  $Y\in\R$ given an input random variable $X\in \mathcal{X} = \mathbb{R}^d$ of dimension $d$. 
More precisely, our goal is to build a predictor $\hat{f}: \mathcal{X} \to \R$ that accurately estimates the regression function $f^\star$  (also called Bayes predictor) defined as a minimizer of the quadratic risk 
\begin{align}
R(f):= \esp\left[\left(Y-f\left(X\right)\right)^2\right],
\end{align}
over the class of measurable functions $f: \mathcal{X} \to \R$. When $f$ is linear, we simply denote $R(\theta)$ the risk of the linear function parameterized by $\theta$, i.e., such that for all $x\in\mathbb{R}^d$, $f(x)=x^\top \theta$.

\paragraph{Random features} 
Real datasets are often characterized by high correlations between variables, or equivalently by a hidden low-rank structure \citep{johnstone2001,udell2019big}. 
The random feature framework \citep{rahimi2007random} constitutes a general and flexible approach for modeling such datasets.
We, therefore, assume that the inputs $(X_i)_{i\in [n]}$, i.i.d.\ copies of $X$, actually result from a random feature (RF) model. 
For pedagogical purposes, we start by restricting ourselves to finite-dimensional latent models.  
\begin{assumption}[Gaussian random features] \label{eq:modelRFgaussien} The input variables $(X_i)_{i\in [n]}$ are assumed to be given by
\begin{equation}
    X_{i,j}=  Z_i^\top  W_j , \qquad \text{for } i\in [n] \text{ and } j\in[d]
\end{equation}
where the $p$-dimensional latent variables $Z_1, \hdots, Z_n$ are i.i.d. copies of $Z \sim\mathcal{N}(0,I_p)$, and where the $p$-dimensional random weights $W_1, \hdots, W_d$ are i.i.d. copies of $W$ uniformly distributed on the sphere $\mathbb{S}^{p-1}$, i.e., $W\sim \mathcal{U}(\mathbb{S}^{p-1})$. 
\end{assumption}

The latent space in Assumption \ref{eq:modelRFgaussien} corresponds to $\R^p$. We have only access to $n$ observations $(X_i)_{i\in [n]}$ of dimension $d$ that can be seen as random projections using $d$ directions of the latent features $(Z_i)_{i\in [n]}$. The total amount of information contained in the latent variables $(Z_i)_{i\in [n]}$ cannot be recovered in expectation by that contained in the observations $(X_i)_{i\in [n]}$ if $d<p$. This no longer holds when $d\gg p$, and the observations $(X_i)_{i\in [n]}$ can be therefore regarded as low-rank variables of rank $p$.

\paragraph{Linear models with RF} 
In the following, we present the model governing the distribution of the output variable $Y$.
\begin{assumption}[Latent linear well specified model]\label{eq:modelYgaussien}
The target variable $Y$ is assumed to follow a linear model w.r.t.\ the latent covariate $Z$, i.e.,
    \begin{equation}
    Y=  Z ^\top \beta^\star + \epsilon,
\end{equation}
where the model parameter is denoted by $\beta^\star\in\R^p$ and the noise $\epsilon\sim \mathcal{N}(0,\sigma^2)$ is assumed to be independent of $Z$.
\end{assumption}
 
 Considering such a random feature setting is particularly convenient when studying the influence of the input dimension $d$ on learning without modifying the underlying model (indeed, the distribution of $Y$ --and $Z$-- does not depend on the input dimension $d$). Note that a similar model (with fixed weight $(W_j)_j$) has already been introduced in \citet{hastie2022surprises}.

\paragraph{Missing data} Often one does not have access to the full input vector $X$ but rather to a version of $X$ containing missing entries. On the contrary, the output $Y$  is always assumed to be observed. To encode such missing information on $X$,  we introduce, the random variable $P\in \{0,1\}^d$, referred to as the missing pattern (or actually an observation indicator), such that $P_{j}=1$ if  $X_{j}$ is observed and $P_{j}=0$ otherwise. Assuming that all variables are equally likely to be missing, we define $\rho:=\P(P_j=1)$ for any $j\in[d]$, i.e., $1-\rho$ is the expected proportion of missing values for any feature.
%
In this paper, we thoroughly analyze the classical Missing Completely At Random (MCAR) setting, where the missing pattern $P$ and the complete observation $(X,Y)$ are independent. Note that we will extend some of our theoretical findings for the MCAR case to relaxed missing scenarios in \Cref{sec:MNAR}.


\begin{assumption}[MCAR pattern with independent components]\label{ass:bernoulli_model}

The complete observation $(X,Y)$ and the missing pattern $P$ are assumed to be independent, i.e., $(X,Y) \perp\!\!\!\perp P$ and such that $P$ follows a Bernoulli distribution $\mathcal{B}(\rho)^{\otimes d}$, i.e., for any $j\in[d]$, $\rho=\P(P_j=1)$, with $0< \rho \leq 1$ denoting
the expected proportion of observed values.
\end{assumption}

\paragraph{Imputation} Most machine learning algorithms are not designed to deal directly with missing data. Therefore, we choose to impute the missing values (both in the training and test sets) by zero (or by the mean for non centered inputs). The imputed inputs in the train and test sets are thus denoted, for all $i$,  by  
\begin{align}
\tilde{X}_{i}= P_{i}\odot X_{i},
\end{align}
where $\odot$ represented the component-wise product. The impact of missing data, and their handling by naive imputation, in this supervised learning task can be scrutinized through the evolution of the following key quantities:
\begin{itemize}
 \item The Bayes risk based on complete random features (without missing entries): 
    \begin{equation*}
        R^\star(d):= \inf_{f} \esp \left[(Y-f(X)^2|W_1,\dots,W_d\right],
    \end{equation*}
    where the infimum is taken over all measurable functions.
    \item The Bayes risk given $X_{\rm miss}= (\tilde X,P)\in \R^d\times\{0,1\}^d$: 
    \begin{equation*}
        R_{\rm miss}^\star(d):= \inf_{f} \esp \left[(Y-f(X_{\rm miss}))^2|W_1,\dots,W_d\right],
    \end{equation*}
    where the infimum is taken over all measurable functions. It has been shown to be attained for a pattern-by-pattern predictor \citep{le2020linear}. The bias or deterministic error due to learning with missing inputs can be characterized as
    \begin{equation*}
    \Delta_{\rm miss}(d):=\esp \left[ R_{\rm miss}^\star(d)-R^\star(d)\right].
    \end{equation*}
    \item The risk of the best linear predictor relying on zero-imputed inputs: 
    \begin{equation*}
        R_{\rm imp}^\star(d):= \inf_{\theta\in\R^d} \esp \left[(Y-\tilde X^\top \theta )^2|W_1,\dots,W_d\right].
    \end{equation*}
    The approximation error associated with this specific class of predictors, among those handling missing inputs,  is denoted by 
    \begin{equation*}
    \Delta_{\rm imp/miss}(d):=\esp \left[ R_{\rm imp}^\star(d)-R_{\rm miss}^\star(d)\right].
\end{equation*}
   
\end{itemize}

Note that these three risks decrease with the dimension and are ordered as follows, 
\begin{equation}\label{eq:riskordre}
    R^\star(d)\leq R_{\rm miss}^\star(d)\leq R_{\rm imp}^\star(d).
\end{equation}
In what follows, we give a precise evaluation of $R^\star(d)$ and $R_{\rm mis}^\star(d)$ and provide bounds for $R_{\rm imp}^\star(d)$ as well. 

\subsection{Theoretical analysis}

Our goal is to dissect the systematic errors introduced by either the occurrence of missing inputs or their handling via naive zero imputation. To do so, we start by characterizing the optimal risk over the class of linear predictors when working with complete inputs.
\begin{propo}\label{prop:RiskcompletDD}
    Under \Cref{eq:modelRFgaussien,eq:modelYgaussien}, the Bayes risk for linear predictors based on complete random features is
    \[ 
        \esp \left[ R^\star(d) \right]=\left\{
        \begin{array}{ll}
        \sigma^2+ \frac{p-d}{p}\Vert \beta^\star \Vert_2^2, & \text{when } d< p,\\
        \sigma^2 & \text{when } d \geq p,
        \end{array}
        \right.
    \]
    where the expectation is taken over $(W_j)_{j\in[d]}$.
\end{propo}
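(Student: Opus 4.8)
The plan is to fix the weights, exploit the joint Gaussianity of $(Z,X)$, and reduce everything to a random-projection computation. Write $\mathbf W=[W_1\,|\,\cdots\,|\,W_d]\in\R^{p\times d}$, so that $X=\mathbf W^\top Z$. Conditionally on $\mathbf W$, the pair $(Z,X)$ is centered Gaussian with $\mathrm{Cov}(Z)=I_p$, $\mathrm{Cov}(Z,X)=\esp[ZZ^\top]\mathbf W=\mathbf W$ and $\mathrm{Cov}(X)=\mathbf W^\top\mathbf W$. Since $Y=Z^\top\beta^\star+\epsilon$ with $\epsilon$ independent of $(Z,\mathbf W)$, we have $\esp[Y\mid X,\mathbf W]=\beta^{\star\top}\esp[Z\mid X,\mathbf W]$, which by Gaussian conditioning is a linear function of $X$. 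Hence the Bayes predictor over all measurable functions is linear, so $R^\star(d)$ indeed coincides with the optimal risk over linear predictors.

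Next I would compute the conditional Bayes risk. Decomposing $Y-\esp[Y\mid X]=\beta^{\star\top}\bigl(Z-\esp[Z\mid X]\bigr)+\epsilon$ and using $\epsilon\perp(Z,X)$ gives
\[
R^\star(d)=\sigma^2+\beta^{\star\top}\,\mathrm{Cov}(Z\mid X)\,\beta^\star .
\]
The Gaussian conditional covariance formula yields $\mathrm{Cov}(Z\mid X)=I_p-\mathbf W(\mathbf W^\top\mathbf W)^{+}\mathbf W^\top$, and since $\mathbf W(\mathbf W^\top\mathbf W)^{+}\mathbf W^\top$ is exactly the orthogonal projector onto $\mathrm{col}(\mathbf W)$ (valid whatever the rank of $\mathbf W$), this matrix equals $\Pi^\perp$, the orthogonal projector onto $\mathrm{col}(\mathbf W)^\perp$ in $\R^p$. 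Therefore $R^\star(d)=\sigma^2+\|\Pi^\perp\beta^\star\|^2$.

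It then remains to average over the weights. Because $W_1,\dots,W_d$ are i.i.d.\ uniform on $\mathbb S^{p-1}$, they almost surely span $\R^p$ when $d\ge p$ and are almost surely linearly independent when $d<p$ (absolute continuity of the uniform law on the sphere). In the regime $d\ge p$ this gives $\Pi^\perp=0$ a.s., hence $\esp[R^\star(d)]=\sigma^2$. In the regime $d<p$, rotational invariance of the joint law of $(W_j)_j$ (apply the same orthogonal $Q$ to all of them) makes $\mathrm{col}(\mathbf W)$ a uniformly distributed $d$-dimensional subspace, so $\Pi^\perp\mapsto Q\Pi^\perp Q^\top$ in distribution; taking expectations, $\esp[\Pi^\perp]$ commutes with every orthogonal matrix and is thus a multiple of $I_p$, the multiple being $\mathrm{tr}(\Pi^\perp)/p=(p-d)/p$. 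Consequently $\esp\bigl[\|\Pi^\perp\beta^\star\|^2\bigr]=\beta^{\star\top}\esp[\Pi^\perp]\beta^\star=\frac{p-d}{p}\|\beta^\star\|_2^2$, which gives the announced formula.

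The only slightly delicate points are the use of the Moore--Penrose pseudo-inverse in the Gaussian conditioning identities when $\mathbf W^\top\mathbf W$ is singular (i.e.\ whenever $d>p$) — harmless, since one may equivalently first restrict to a maximal linearly independent subfamily of the $W_j$'s — and the almost-sure rank statements, which are immediate from absolute continuity. Neither constitutes a genuine obstacle; the computation of $\esp[\Pi^\perp]$ via invariance is the crux and is entirely elementary.
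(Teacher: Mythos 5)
Your proof is correct, and it arrives at the same core identity as the paper: conditionally on the weights, $R^\star(d)=\sigma^2+\beta^{\star\top}(I_p-\Pi)\beta^\star$ with $\Pi$ the orthogonal projector onto $\mathrm{Span}(W_1,\dots,W_d)$, after which the averaging over $(W_j)_j$ is exactly the paper's rotational-invariance lemma (replace $\mathbf{W}$ by $\mathbf{W}O^\top$, conclude $\esp[\beta^{\star\top}M\beta^\star]=\Vert\beta^\star\Vert_2^2\,\esp\mathrm{Tr}(M)/p$, and use $\mathrm{Tr}(I_p-\Pi)=p-d$ a.s.\ when $d<p$, $\Pi=I_p$ a.s.\ when $d\geq p$). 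Where you genuinely differ is in how the projector appears: the paper restricts to linear predictors $x\mapsto x^\top\theta$, uses isotropy of $Z$ to rewrite the excess risk as the least-squares distance $\Vert\beta^\star-\sum_j\theta_j W_j\Vert_2^2$, and minimizes over $\theta$; you instead exploit joint Gaussianity of $(Z,X)$ given $\mathbf{W}$ and read the risk off the conditional covariance $\mathrm{Cov}(Z\mid X)=I_p-\mathbf{W}(\mathbf{W}^\top\mathbf{W})^{+}\mathbf{W}^\top$. Your route buys something the paper leaves implicit: since $\esp[Y\mid X,\mathbf{W}]$ is linear in $X$, the infimum over all measurable functions (which is how $R^\star(d)$ is actually defined) coincides with the infimum over linear predictors, so the proposition's phrase ``Bayes risk for linear predictors'' is justified rather than assumed. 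The paper's least-squares route is marginally more elementary, avoiding Gaussian conditioning with a pseudo-inverse in the degenerate case $d>p$, but your handling of that point — restricting to a maximal linearly independent subfamily, or noting that $A(A^\top A)^{+}A^\top$ is the column-space projector whatever the rank — is sound, as are the almost-sure rank statements.
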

 Proposition \ref{prop:RiskcompletDD} highlights that learning with a number $d$ of random features larger than the latent dimension $p$ is equivalent to learning directly with the latent covariate $Z$. Besides, when $d<p$, the Bayes predictor suffers from an increased risk, as learning is flawed by a lack of information in the (fully observed) inputs. This can be compensated by increasing the number $d$ of random features, as 
the explained variance of $Y$, i.e., 
$
    \esp Y^2-\esp R^\star(d)=\frac{d}{p}\Vert \beta^\star \Vert_2^2,
$
increases with $d$ for $d\leq p$. 

\begin{propo}\label{prop:Riskmissing}
    Under \Cref{eq:modelRFgaussien,eq:modelYgaussien,ass:bernoulli_model}, the Bayes risk for  predictors working with missing data is given by 
    \[ 
        \esp \left[ R^\star_{\rm miss }(d) \right] =\left\{
        \begin{array}{ll}
        \sigma^2+ \frac{p-\rho d}{p}\Vert \beta^\star \Vert_2^2 & \text{when } d< p,\\
         \sigma^2 + \frac{\esp[(p-B)\ind_{B\leq p}]}{p}\Vert \beta^\star \Vert_2^2 & \text{when } d \geq p,
        \end{array}
        \right.
    \]
    where the expectation is taken over the random weights $(W_j)_{j\in[d]}$ and $B\sim\mathcal{B}(d,\rho)$. Therefore,
        \begin{equation*}
        \Delta_{\rm miss}(d)=
        \left\{
        \begin{array}{ll}
         (1-\rho)\frac{d}{p}\Vert \beta^\star \Vert_2^2 & \text{when } d< p,\\
        \frac{\esp[(p-B)\ind_{B\leq p}]}{p}\Vert \beta^\star \Vert_2^2, & \text{when } d \geq p.
        \end{array}
        \right.
    \end{equation*}
\end{propo}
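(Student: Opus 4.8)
The plan is to reduce the computation, pattern by pattern, to the complete-data case already settled in \Cref{prop:RiskcompletDD}. Fix the weights $W_1,\dots,W_d$ and condition on a realization of the mask $P$ with observed set $S=\{j\in[d]:P_j=1\}$. Since $(X,Y)\perp\!\!\!\perp P$ by \Cref{ass:bernoulli_model}, conditioning on $\{P=\text{this mask}\}$ does not change the law of $(X,Y)$, and on this event the imputed vector $\tilde X=P\odot X$ is in bijection with the subvector $X_S=(X_j)_{j\in S}$ (the other coordinates being $0$); hence the Bayes predictor given $X_{\rm miss}=(\tilde X,P)$ coincides, on each pattern, with $x\mapsto\esp[Y\mid X_S=x,W]$, which recovers the pattern-by-pattern optimality recalled after the definition of $R^\star_{\rm miss}$. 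This gives
\[
R^\star_{\rm miss}(d)=\esp_P\Big[\,\esp\big[(Y-\esp[Y\mid X_S,W])^2\,\big|\,W,P\big]\Big].
\]

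Next I would identify the per-pattern conditional expectation. Writing $W_S=[W_j]_{j\in S}\in\R^{p\times|S|}$ so that $X_S=W_S^\top Z$, Assumptions \ref{eq:modelRFgaussien}--\ref{eq:modelYgaussien} make $(Z,X_S)$ jointly Gaussian with $\epsilon$ independent and centered, so $\esp[Y\mid X_S,W]=\beta^{\star\top}\esp[Z\mid X_S]=\beta^{\star\top}\Pi_S Z$, where $\Pi_S$ is the orthogonal projector onto $\mathrm{col}(W_S)$ (using the Moore--Penrose pseudo-inverse when $|S|>p$). Then $Y-\esp[Y\mid X_S,W]=\beta^{\star\top}(I_p-\Pi_S)Z+\epsilon$, and since $Z\sim\mathcal N(0,I_p)$ is independent of $\epsilon$,
\[
\esp\big[(Y-\esp[Y\mid X_S,W])^2\,\big|\,W,P\big]=\sigma^2+\norm{(I_p-\Pi_S)\beta^\star}^2,
\]
exactly the complete-case formula with $d$ replaced by the number of observed features. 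Note that $\mathrm{rank}(\Pi_S)=\min(|S|,p)$ almost surely, because i.i.d.\ uniform vectors on $\mathbb S^{p-1}$ are in general position.

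It then remains to average over $W$ and $P$. By exchangeability of the $W_j$, conditionally on $|S|=b$ the subspace $\mathrm{col}(W_S)$ is uniformly distributed among the $\min(b,p)$-dimensional subspaces of $\R^p$; rotational invariance (conjugating $\Pi_S$ by a Haar-random orthogonal matrix) forces $\esp[\Pi_S\mid|S|=b]=\tfrac{\min(b,p)}{p}I_p$, whence $\esp\big[\norm{(I_p-\Pi_S)\beta^\star}^2\mid|S|=b\big]=\tfrac{p-\min(b,p)}{p}\norm{\beta^\star}^2$. Since $|S|=\sum_jP_j=:B\sim\mathcal B(d,\rho)$ and $p-\min(b,p)=(p-b)\ind_{b\le p}$, averaging over $W$ and $P$ yields $\esp[R^\star_{\rm miss}(d)]=\sigma^2+\tfrac{\esp[(p-B)\ind_{B\le p}]}{p}\norm{\beta^\star}^2$. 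When $d<p$ we have $B\le d<p$ a.s., so $\ind_{B\le p}=1$ and $\esp[(p-B)\ind_{B\le p}]=p-\rho d$, giving the first case; the case $d\ge p$ is the stated formula. Finally $\Delta_{\rm miss}(d)=\esp[R^\star_{\rm miss}(d)]-\esp[R^\star(d)]$ follows by subtracting \Cref{prop:RiskcompletDD}: for $d<p$ it is $\tfrac{(p-\rho d)-(p-d)}{p}\norm{\beta^\star}^2=(1-\rho)\tfrac dp\norm{\beta^\star}^2$, and for $d\ge p$ it equals $\tfrac{\esp[(p-B)\ind_{B\le p}]}{p}\norm{\beta^\star}^2$.

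The only genuinely delicate points are (i) the pattern-by-pattern reduction, which relies on the MCAR independence $(X,Y)\perp\!\!\!\perp P$, and (ii) the averaging identity $\esp[\Pi_S]=\tfrac{\mathrm{rank}(W_S)}{p}I_p$ together with the almost-sure rank computation. Both are standard, so the main obstacle is bookkeeping the $\min(\cdot,p)$ truncation, which enters precisely because $W_S$ saturates its rank at $p$ once $|S|\ge p$ and is responsible for the split into the two regimes.
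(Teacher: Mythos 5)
Your proposal is correct and follows essentially the same route as the paper: a pattern-by-pattern reduction justified by the MCAR independence (the paper's Lemma~\ref{lem:decompo_pat_esperance}, relying on the pattern-wise decomposition of \citet{le2020linear}), followed by averaging the complete-data risk over the binomial number of observed features $B\sim\mathcal{B}(d,\rho)$ and subtracting \Cref{prop:RiskcompletDD}. The only cosmetic difference is that you re-derive the per-pattern risk $\sigma^2+\norm{(I_p-\Pi_S)\beta^\star}_2^2$ and the identity $\esp[\Pi_S\mid |S|=b]=\tfrac{\min(b,p)}{p}I_p$ explicitly, whereas the paper invokes \Cref{prop:RiskcompletDD} (whose proof contains exactly this projection and rotational-invariance argument) as a black box.
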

To our knowledge, this result is the first one to precisely evaluate the error induced by missing inputs when learning a linear latent model. 
More specifically, two regimes are identified. 
In the first regime $d<p$, i.e., when working with random features of lower dimension than that of the latent model, $R_{\rm miss}^\star$ takes the same form as $R^\star$, where the input dimension $d$ is replaced by $\rho d$.
This can be interpreted as the cost of learning with $\rho d$ observed features in expectation instead of the $d$ initial features.

In the second regime, when $d \geq p$, the error due to missing data becomes more and more negligible as $d$ increases, as the redundancy of the random feature model is sufficient to retrieve the information contained in the latent covariate of lower dimension $p$.
Furthermore, if $d\geq (p+1)\frac{(1-\rho)e}{\rho}$, we can bound $\Delta_{\rm miss}(d)$ from above and below,
\begin{align}
    \frac{\rho}{2e}(1-\rho)^{d-1}\Vert \beta^\star \Vert_2^2 & \leq \Delta_{\rm miss}(d)  \leq p\left(\frac{d\rho}{p(1-\rho)}\right)^p(1-\rho)^{d}\Vert \beta^\star \Vert_2^2,\label{eq:delta_miss_encadrement}
\end{align}
showing that $\Delta_{\rm miss}(d)$ decays exponentially fast with $d$ in the high-dimensional regime:  the impact of missing data on learning is therefore completely mitigated in high dimension.



\begin{theorem}\label{thm:imputationDD}
  Under \Cref{eq:modelRFgaussien,eq:modelYgaussien,ass:bernoulli_model}, the Bayes risk for predictors based on zero-imputed random features satisfies
  \begin{align}
  \label{eq:RimpUpperBound}
    & \esp \left[  R^\star_{\rm imp }(d) \right] - \sigma^2  \leq \left\{
        \begin{array}{ll}
         \inf\limits_{k\leq d} \left\{\frac{p-\rho k}{p} + \frac{ (1-\rho)\rho(k-1)}{p-\rho(k-1)-2}\frac{k}{p}\right\} \Vert \beta^\star \Vert_2^2  & \text{if } d< p,\\
         \frac{p}{\rho d+(1-\rho)p} \Vert \beta^\star \Vert_2^2 & \text{if } d \geq p.
        \end{array}
        \right. 
  \end{align}
Thus, when $d< p$, 
    \begin{align}
    \label{eq:DeltaimpmissLowDim}
                \Delta_{\rm imp/miss}(d)&\leq \frac{(1-\rho) \rho(d-1)}{p-\rho(d-1)-2}\frac{d}{p}\Vert \beta^\star \Vert_2^2\\
                &= \frac{\rho(d-1)}{p-\rho(d-1)-2}\Delta_{\rm miss}(d).\label{eq:DeltaimpmissLowDim3} 
    \end{align}
And, when $d\geq p$, 
\begin{align}\label{eq:DeltaimpmissHighDim1}
                \frac{(1-\rho)p}{\rho d+(1-\rho)p}\Vert \beta^\star \Vert_2^2 & \leq \Delta_{\rm imp/miss}(d)+\Delta_{\rm miss}(d)\\
                & \leq \frac{p}{\rho d+(1-\rho)p}\Vert \beta^\star \Vert_2^2.
                \label{eq:DeltaimpmissHighDim2}
\end{align}
\end{theorem}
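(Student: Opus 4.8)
The plan is to compute $R^\star_{\rm imp}(d)$ by identifying the best linear predictor on the zero-imputed inputs $\tilde X$, which is a classical least-squares computation: the optimal $\theta$ satisfies $\esp[\tilde X \tilde X^\top]\theta = \esp[\tilde X Y]$, so the excess risk over $\sigma^2$ equals $\esp[Y^2]-\sigma^2 - \esp[\tilde X Y]^\top \esp[\tilde X \tilde X^\top]^{-1}\esp[\tilde X Y]$, all conditionally on the weights $(W_j)_j$ (or on a subset of them). Using Assumption~\ref{ass:bernoulli_model} (MCAR, independent Bernoulli masks) together with the random-feature structure, $\esp[\tilde X \tilde X^\top]$ and $\esp[\tilde X Y]$ have explicit forms in terms of $\rho$ and the Gram-type matrix $W^\top W$ of the weights; in particular the diagonal scaling $\rho$ versus off-diagonal scaling $\rho^2$ produces a matrix of the form $\rho^2 M + \rho(1-\rho)\,\mathrm{diag}$ that can be inverted (or lower-bounded in the Loewner order) after conditioning.

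For the regime $d\ge p$, I would exploit that $W^\top W$ is (almost surely) invertible of rank $p$, reduce the quadratic form to a $p\times p$ computation, and use concentration/exact moments of the relevant Wishart-type quantities; the bound $\frac{p}{\rho d + (1-\rho)p}\Vert\beta^\star\Vert_2^2$ suggests that the effective regularization $\rho(1-\rho)$ on the diagonal plays the role of a ridge penalty of size proportional to $(1-\rho)/(\rho d)$ relative to the signal, and I expect the clean closed form to come from the identity $\esp[W W^\top] = \tfrac{1}{p}I_p$ for $W\sim\mathcal U(\mathbb S^{p-1})$ plus an exchangeability argument across the $d$ coordinates. The two-sided bound \eqref{eq:DeltaimpmissHighDim1}--\eqref{eq:DeltaimpmissHighDim2} would then follow by combining this with the exact value of $\Delta_{\rm miss}(d)$ from Proposition~\ref{prop:Riskmissing} and the ordering \eqref{eq:riskordre}.

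For the regime $d<p$, the key trick is the infimum over $k\le d$: rather than using all $d$ imputed coordinates, one restricts the linear predictor to its first $k$ coordinates (zeroing out the rest), which can only increase the risk, so it suffices to bound the excess risk of the best linear predictor on $\tilde X_{1:k}$ for each $k$ and optimize. For a fixed $k$, I would again write the excess risk as $\esp[Y^2]-\sigma^2$ minus a quadratic form, split it as $R^\star(k)-\sigma^2$ (which Proposition~\ref{prop:RiskcompletDD} gives as $\frac{p-k}{p}\Vert\beta^\star\Vert_2^2$, matching the first term after replacing $k$ by $\rho k$ at the level of $R^\star_{\rm miss}$) plus a correction term coming from the mismatch between $\esp[\tilde X_{1:k}\tilde X_{1:k}^\top]$ and $\rho^2\esp[X_{1:k}X_{1:k}^\top]$. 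The correction is where the factor $\frac{(1-\rho)\rho(k-1)}{p-\rho(k-1)-2}$ appears: it should come from bounding the operator norm of $\esp[X_{1:k}X_{1:k}^\top]^{-1}$ — equivalently the smallest eigenvalue of a $k$-dimensional Wishart-like matrix built from the $W_j$'s on the sphere — whose inverse first moment produces a denominator of the form $p-\rho(k-1)-2$ (an inverse-Wishart-type expectation). Then \eqref{eq:DeltaimpmissLowDim} is read off by subtracting $\esp[R^\star_{\rm miss}(d)]$, and \eqref{eq:DeltaimpmissLowDim3} is just algebra with the expression for $\Delta_{\rm miss}(d)$.

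The main obstacle I anticipate is controlling the inverse second-moment matrix $\esp[\tilde X_{1:k}\tilde X_{1:k}^\top]^{-1}$ (and its interaction with $\esp[\tilde X_{1:k} Y]$) precisely enough to get the stated rational functions of $p,\rho,k$ rather than just order-of-magnitude bounds: this requires exact expectations of quadratic forms in the uniform-on-sphere weights, i.e., moments of an inverse-Wishart-type distribution, and careful handling of the $\rho(1-\rho)$ diagonal perturbation via a Sherman–Morrison / Woodbury-style identity or a Loewner-order sandwich. Getting the constants $-1$ and $-2$ exactly right in $p-\rho(k-1)-2$ will be the delicate bookkeeping step.
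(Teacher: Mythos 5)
Your sketch gets several structural ingredients right: the identity $\esp[\tilde X\tilde X^\top\mid\mathbf W]=\rho^2\Sigma+\rho(1-\rho)\,\mathrm{diag}(\Sigma)$ (this is exactly what underlies \Cref{lem_previous_work1} and the ridge reformulation \eqref{eq:BiasRidgeForm}), the restriction-to-$k$-coordinates argument for the infimum over $k\le d$, and, in the regime $d\ge p$, the exchangeability computation $\esp[W_1W_1^\top]=\tfrac1p I_p$. However, the two places where the stated constants actually come from are missing or misattributed. For $d<p$, you locate the denominator $p-\rho(k-1)-2$ in an inverse-Wishart-type moment obtained by controlling the operator norm (smallest eigenvalue) of $\esp[X_{1:k}X_{1:k}^\top]^{-1}$; that mechanism cannot produce a $\rho$-dependent denominator, and smallest-eigenvalue control is in any case too lossy. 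The inverse-chi-squared moment only yields $\esp\Vert\mathbf W^\dagger\Vert_{\mathrm{Fr}}^2=d\bigl(1+\tfrac{d-1}{p-d-1}\bigr)$ (\Cref{lem:RMfrobenius}), i.e.\ a denominator $p-d-1$ with no $\rho$. The paper obtains the theorem by a variational shortcut you do not have: plug the rescaled complete-data optimum $\theta=x\theta^\star$ into $R_{\rm imp}(\theta)=R(\rho\theta)+\rho(1-\rho)\Vert\theta\Vert_2^2$, use the exact values $\esp\Vert\theta^\star\Vert_\Sigma^2=\tfrac dp\Vert\beta^\star\Vert_2^2$ and $\esp\Vert\theta^\star\Vert_2^2=\tfrac dp\tfrac{p-2}{p-d-1}\Vert\beta^\star\Vert_2^2$, and minimize the scalar function $(1-\rho x)^2a+\rho(1-\rho)x^2b$; the mixing of $\rho$ with the Wishart constant in the minimization is precisely what yields $p-\rho(d-1)-2$. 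If instead you bound the exact quadratic form by dropping the diagonal ridge or by $\lambda_{\min}$ arguments, you end up with a factor of order $\tfrac{1-\rho}{\rho}\tfrac{d(p-2)}{p(p-d-1)}$, which does not match \eqref{eq:RimpUpperBound}.

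For $d\ge p$, your proposed derivation of the two-sided bound \eqref{eq:DeltaimpmissHighDim1}--\eqref{eq:DeltaimpmissHighDim2} ``by combining with the exact value of $\Delta_{\rm miss}(d)$ and the ordering \eqref{eq:riskordre}'' does not work: for $d\gg p$, $\Delta_{\rm miss}(d)$ is exponentially small (see \eqref{eq:delta_miss_encadrement}), so neither it nor the one-sided ordering can force the lower bound $\tfrac{(1-\rho)p}{\rho d+(1-\rho)p}\Vert\beta^\star\Vert_2^2$ on $\Delta_{\rm imp/miss}+\Delta_{\rm miss}$. The paper's route is to show (\Cref{lem:DeltaImpFormTrace}) that $\Delta_{\rm imp/miss}+\Delta_{\rm miss}=\tfrac{\lambda\Vert\beta^\star\Vert_2^2}{p}\,\esp\,\mathrm{Tr}\bigl((\mathbf W^\top\mathbf W+\lambda I_p)^{-1}\bigr)$ with $\lambda=\tfrac{1-\rho}{\rho}$, and then to sandwich this trace using \Cref{prop:exchangeabilityResults}: the lower bound is the trace-Jensen direction $\esp\,\mathrm{Tr}((A+\lambda)^{-1})\ge\mathrm{Tr}((\esp A+\lambda)^{-1})$ applied with $\esp[\mathbf W^\top\mathbf W]=\tfrac dp I_p$, and the matching upper bound comes from the same lemma, not from generic Wishart concentration. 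So while your overall framing (normal equations with the perturbed covariance, ridge interpretation, exchangeability) is compatible with the paper's, the two decisive steps --- the scalar-rescaling optimization in low dimension and the Jensen-type trace sandwich in high dimension --- are absent, and the mechanisms you substitute for them would not yield the stated bounds.
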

Theorem \ref{thm:imputationDD} is the first result to provide a complete view of the impact of naive imputation on learning linear latent model.
In particular, it sheds light on the following low-dimensional behavior.
When $\rho d\ll p$, the error due to naive imputation appears to be negligible in comparison to the error $\Delta_{\rm miss}(d)$ due to missing data. Low-dimensional (missing) random features are unlikely to be strongly correlated, thus making imputation before training competitive (compared to the best predictor based on missing values). This is all the more true as the expected number of observed entries $\rho d$ is negligible compared to $p$.

In high dimensions where $d\gg p$, both errors $\Delta_{\rm imp/miss}(d)$ and $\Delta_{\rm miss}(d)$ vanish: neither the occurrence of missing data, nor their naive handling through imputation, hinder the learning task. 
This provides a refined and more rigorous analysis of this favorable behavior already identified in \citet{ayme2023naive}.
Remark that, contrary to $\Delta_{\rm miss}(d)$ (Equation~\eqref{eq:delta_miss_encadrement}), $\Delta_{\rm imp/miss}(d)$ does not seem to decrease exponentially as $d$ increases, but only as $1/d$. Not that, according to \eqref{eq:DeltaimpmissHighDim1}, this rate is optimal.



\paragraph{Illustration} We illustrate the bounds obtained in \Cref{prop:RiskcompletDD}, \Cref{prop:Riskmissing} and \Cref{thm:imputationDD} in \Cref{fig:dd}. 
In particular, we remark that the upper bounds \eqref{eq:RimpUpperBound} represented in (a) decrease with the number of features $d$ but is loose for $d$ close to and smaller than $p$. Indeed, for this regime, features are not enough isotropic to say that imputation by the mean (here by $0$) is relevant, and not enough correlated to exploit shared information between features. 
Figure (b) illustrates the shift point in $p$ for $R_{\rm mis}$ and $R_{\rm imp/mis}$ and the difficulties to learn with missing values (imputed or not) around $d=p$. 





\begin{figure*}[h]
    \centering
    \begin{tabular}{cc}   \includegraphics[width=0.49\textwidth]{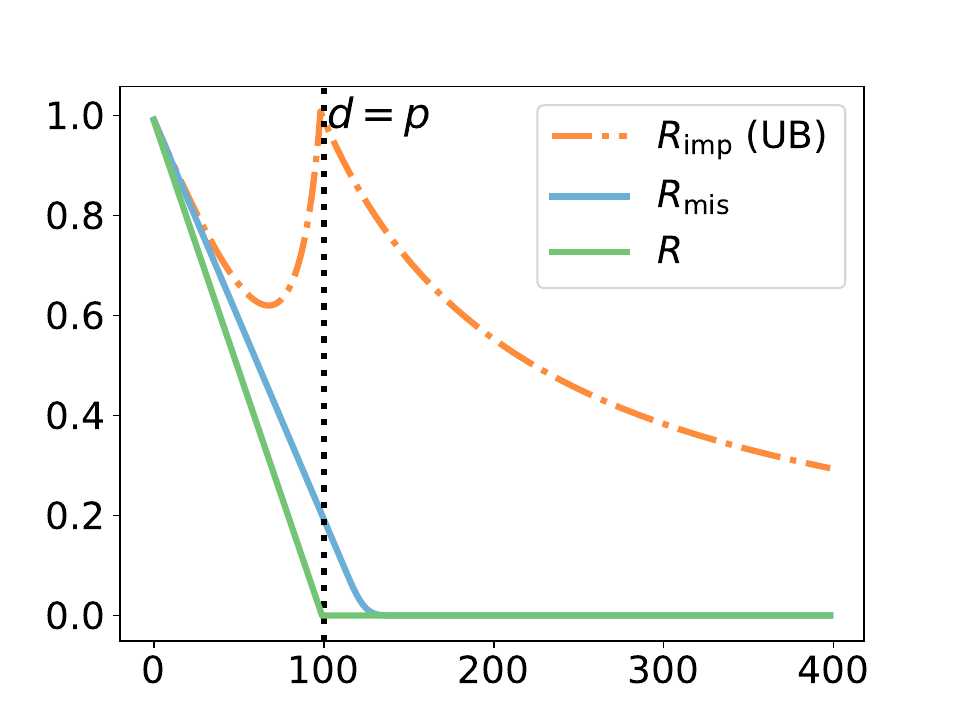} &     \includegraphics[width=0.49\textwidth]{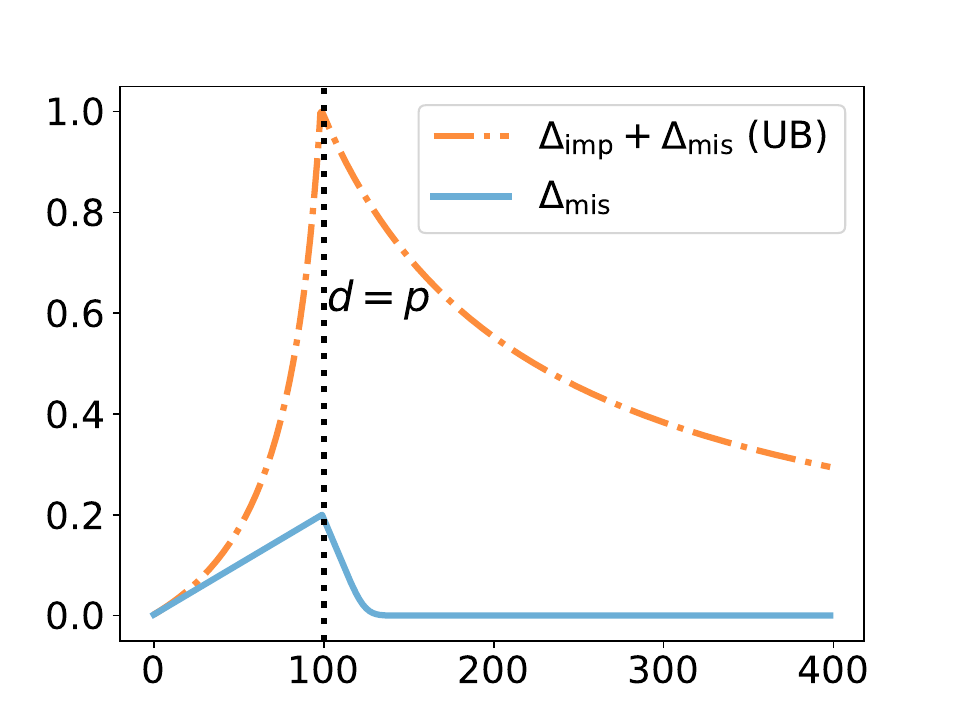} \\
    (a) & (b)
    \end{tabular}
    \caption{Evolution of different risks w.r.t.\ the number $d$ of random features, with $p=100$, $\Vert \beta^\star\Vert=1$, $\rho=0,8$ and $\sigma=0$. \label{fig:dd}}
\end{figure*}

\subsection{Learning from imputed data with SGD}\label{sec:SGDGaussian}
We leverage on our analysis of the Bayes risk of impute-then-regress strategy to propose a learning algorithm based on an SGD strategy. Our algorithm is computationally efficient, as it requires only one pass over the dataset, and shown to have theoretical guarantees.
Note that due to missing data, the model becomes miss-specified \citep[see][]{ayme2023naive}, a challenging study case that can still be handled by SGD procedures. 

\paragraph{SGD estimator.}
An averaged stochastic gradient descent (SGD) on the imputed dataset $(\tilde X_i)_{i\in [n]}$ is performed to directly minimize the theoretical risk $\theta\longmapsto R_{\rm imp}(\theta)$ over $\R^d$. The algorithm starts from  $\theta_0=0$ with a step-size $\gamma>0$, then follows the recursion 
\begin{equation}\label{eq:SGDiteration}
 \theta_{t}=\left[I-\gamma \tilde{X}_{t}\tilde{X}_{t}^{\top}\right]\theta_{t-1}+\gamma Y_{t}\tilde{X}_{t},   
\end{equation}
and outputs after $n$ iterations
the Polyak-Ruppert averaging $\bar\theta=\frac{1}{n+1}\sum_{t=1}^n \theta_{t}$, 
used to  estimate $\theta_{\imp}^\star$.
This algorithm (performing one pass over the training data) is optimal in terms of computational complexity. 
Note that the choice of the step size should depend intimately on the input dimension, with a slight variation, according to whether the setting is low or high-dimensional \citep[see,][for example]{dieuleveut2016nonparametric}. 

\begin{theorem}
\label{thm:sgd_rf_gaussian}
    Under \Cref{eq:modelRFgaussien,eq:modelYgaussien,ass:bernoulli_model}, for $d<p-1$, the SGD recursion with Polyak-Ruppert averaging and step size $\gamma=\frac{1}{d}$ satisfy
    \begin{align}
        &\esp [R_{\imp}(\bar\theta)-R_{\rm miss}^\star(d)]  
        \lesssim \frac{\rho(1-\rho)d(d-1)}{p(p-\rho (d-1)-2)}\Vert \beta^\star \Vert_2^2\notag\\
        &\quad \quad +\frac{d}{\rho n}\frac{d}{(p-\rho(d-1)-2)}\Vert \beta^\star \Vert_2^2
        +\rho\frac{d}{n}(\sigma^2+\Vert \beta^\star \Vert_2^2).\label{eq:GenGaussianLD}
    \end{align}
    For $d\geq p$, the choice $\gamma=\frac{1}{d\sqrt{n}}$ leads to
    \begin{align}\label{eq:GenGaussianHD}
        \esp[R_{\imp}(\bar\theta)-R^\star(d)] &\lesssim \frac{1}{\rho }\frac{p}{d} \left\Vert \beta^{\star}\right\Vert _{2}^{2} +\frac{p}{\rho^2(1-\rho)\sqrt{n}}\left\Vert \beta^{\star}\right\Vert _{2}^{2}+ \frac{\sigma^2+\Vert \beta^\star \Vert_2^2}{\sqrt{n}}.
    \end{align}
    
\end{theorem}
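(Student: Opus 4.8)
The plan is to decompose the excess risk into a purely statistical term --- the excess risk of the averaged SGD iterate over the best linear predictor on zero-imputed data --- and a deterministic approximation term already handled by the preceding results. For $d<p-1$, write
\[
\esp\left[R_{\imp}(\bar\theta)-R_{\rm miss}^\star(d)\right]=\Big(\esp\left[R_{\imp}(\bar\theta)\right]-\esp\left[R_{\imp}^\star(d)\right]\Big)+\Delta_{\rm imp/miss}(d),
\]
and bound $\Delta_{\rm imp/miss}(d)$ by \eqref{eq:DeltaimpmissLowDim} of \Cref{thm:imputationDD}, which is exactly the first summand of \eqref{eq:GenGaussianLD}. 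For $d\geq p$, write instead $\esp[R_{\imp}(\bar\theta)-R^\star(d)]=(\esp[R_{\imp}(\bar\theta)]-\esp[R_{\imp}^\star(d)])+\Delta_{\rm imp/miss}(d)+\Delta_{\rm miss}(d)$ and use \eqref{eq:DeltaimpmissHighDim2} together with $\frac{p}{\rho d+(1-\rho)p}\le\frac{1}{\rho}\frac{p}{d}$, which produces the first summand of \eqref{eq:GenGaussianHD}. It then remains to bound the statistical term $\esp[R_{\imp}(\bar\theta)]-\esp[R_{\imp}^\star(d)]$.

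Conditionally on the weights $W=(W_1,\dots,W_d)$, the recursion \eqref{eq:SGDiteration} is averaged constant-step-size SGD for the misspecified least-squares objective $\theta\mapsto R_{\imp}(\theta)$ with i.i.d.\ data $(\tilde X_i,Y_i)$, so I would invoke an off-the-shelf finite-sample generalization bound for this situation \citep{bach2013non,dieuleveut2017harder}. Its hypotheses involve the population Gram matrix $H(W):=\esp[\tilde X\tilde X^\top\mid W]$, a fourth-moment self-bounding constant $R^2$ with $\esp[\Vert\tilde X\Vert^2\tilde X\tilde X^\top\mid W]\preceq R^2 H(W)$ and $\gamma R^2\lesssim 1$, and the covariance of the stochastic gradient at the optimum; its conclusion is the sum of a bias term (vanishing with $n$, built from $\theta_{\imp}^\star$, $\gamma$, $n$ and the conditioning of $H(W)$) and a variance term (built from the gradient-noise covariance, $\gamma$, $n$). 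The substance of the proof is then a sequence of explicit Gaussian computations, carried out conditionally on $W$ and afterwards integrated over $W$. First, since $\tilde X_j=P_j W_j^\top Z$, one has $H(W)=\rho^2 W^\top W+\rho(1-\rho)I_d$, hence $\mathrm{tr}(H(W))=\rho d$ (using $\Vert W_j\Vert=1$), $\lambda_{\min}(H(W))\ge\rho(1-\rho)$, and $\lambda_{\max}(H(W))=\rho^2\lambda_{\max}(W^\top W)+\rho(1-\rho)\le d$; the last inequality makes $\gamma R^2\lesssim 1$ for $\gamma=1/d$, while for $d\geq p$ the near-isometry $WW^\top\approx\frac dp I_p$ sharpens it to $\lambda_{\max}(H(W))\lesssim\rho^2 d/p+\rho$. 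Second, conditionally on $W$ and the mask $P$ the vector $\tilde X$ is centered Gaussian, so the identity $\esp[\Vert g\Vert^2 gg^\top]=2\Sigma^2+\mathrm{tr}(\Sigma)\Sigma$ yields $\esp[\Vert\tilde X\Vert^2\tilde X\tilde X^\top\mid W]\preceq(2\lambda_{\max}(H)+\mathrm{tr}(H))H\lesssim d\,H(W)$, so $R^2\lesssim d$. Third, from $\esp[\tilde X Z^\top\mid W]=\rho W^\top$ and $\esp[\tilde X\epsilon\mid W]=0$ one gets $\theta_{\imp}^\star(W)=\rho\,H(W)^{-1}W^\top\beta^\star$, and the push-through identity $W(\rho^2 W^\top W+cI_d)^{-1}=(\rho^2 WW^\top+cI_p)^{-1}W$ rewrites the relevant quadratic forms in $\theta_{\imp}^\star$ (both $\esp_W\Vert\theta_{\imp}^\star\Vert^2$ and its $H(W)^{-1}$-weighted version) as traces of rational functions of $W^\top W$ --- i.e.\ inverse-Wishart-type moments of the Gram matrix of the $W_j$'s; this is where the denominator $p-\rho(d-1)-2$ appears and where the hypothesis $d<p-1$ (finiteness of that moment) is used, whereas for $d\geq p$ the same identity lands on $WW^\top\approx\frac dp I_p$ and produces the $\frac{p}{\rho^2(1-\rho)d}\Vert\beta^\star\Vert_2^2$-type control. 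Fourth, writing the residual at the optimum as $Y-\tilde X^\top\theta_{\imp}^\star=\epsilon+\eta$ with $\eta=Z^\top\beta^\star-\tilde X^\top\theta_{\imp}^\star$, the $\epsilon$-part contributes exactly $\sigma^2 H(W)$ to the gradient-noise covariance, and the $\eta$-part is controlled by Gaussian fourth-moment bounds in terms of $\esp[\eta^2]\le\Vert\beta^\star\Vert_2^2$ and $\mathrm{tr}(H(W))$, giving an effective noise level $\lesssim\sigma^2+\Vert\beta^\star\Vert_2^2$.

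Assembling these, I would substitute $\gamma=1/d$ (resp.\ $\gamma=1/(d\sqrt n)$), feed the bounds above into the SGD inequality, simplify the bias term to $\frac{d}{\rho n}\frac{d}{p-\rho(d-1)-2}\Vert\beta^\star\Vert_2^2$ (resp.\ $\frac{p}{\rho^2(1-\rho)\sqrt n}\Vert\beta^\star\Vert_2^2$) and the variance term to $\rho\frac dn(\sigma^2+\Vert\beta^\star\Vert_2^2)$ (resp.\ $\frac{\sigma^2+\Vert\beta^\star\Vert_2^2}{\sqrt n}$), take the expectation over $W$, and add the deterministic $\Delta$-terms from the first paragraph. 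I expect the main obstacle to be the control of $\esp_W\Vert\theta_{\imp}^\star\Vert^2$ and of the related $H(W)^{-1}$-weighted quantity through moments of the random matrix $W^\top W$, a sum of $d$ rank-one projections in $\R^p$: obtaining the sharp denominator $p-\rho(d-1)-2$ (rather than a crude $p-d$), which is what forces the restriction $d<p-1$, requires the precise inverse-Wishart-type moment identities. A secondary, more routine difficulty is bookkeeping the $\rho$- and $(1-\rho)$-factors so that the bias and variance terms come out exactly as in \eqref{eq:GenGaussianLD}--\eqref{eq:GenGaussianHD}, together with invoking (in the regime $d\geq p$) the appropriate spectral concentration for $WW^\top$, or falling back on the crude deterministic bound $\lambda_{\max}(W^\top W)\le d$ at the cost of universal constants.
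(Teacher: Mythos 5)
Your skeleton coincides with the paper's: the same split into a deterministic part (bounded by \Cref{thm:imputationDD}, exactly as you propose, including the relaxation $\frac{p}{\rho d+(1-\rho)p}\le\frac{1}{\rho}\frac{p}{d}$) plus a statistical part handled by an averaged constant-step SGD bound of Bach--Moulines type applied conditionally on the weights, with the same Gaussian verifications ($\mathrm{Tr}(\Sigma_{\rm imp})=\rho d$, $\lambda_{\min}(\Sigma_{\rm imp})\ge\rho(1-\rho)$, fourth-moment constant $R^2\lesssim d$ via the Hadamard-product monotonicity, and a Cauchy--Schwarz/kurtosis argument for the gradient noise giving an effective level $\lesssim\sigma^2+\Vert\beta^\star\Vert_2^2$ up to $\rho$-factors). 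The one place where you genuinely diverge is the control of $\esp\Vert\theta^\star_{\rm imp}\Vert_2^2$ (and its weighted analogue): you plan to compute it directly from $\theta^\star_{\rm imp}=\rho\,\Sigma_{\rm imp}^{-1}\mathbf{W}\beta^\star$ via moments of the regularized resolvent of the Gram matrix, expecting the denominator $p-\rho(d-1)-2$ to emerge from inverse-Wishart-type identities. The paper does not do this, and for good reason: that denominator is not a resolvent moment but comes from the scalar test-point optimization ($x\theta^\star$, optimized over $x$) in the proof of \eqref{eq:RimpUpperBound}; closed-form moments of $(\rho^2\mathbf{W}\mathbf{W}^\top+\rho(1-\rho)I)^{-1}$ for sphere-uniform rows are not available in the clean form you would need. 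Instead the paper short-circuits the whole computation with the elementary consequence of \Cref{lem_previous_work1}, namely $\rho(1-\rho)\Vert\theta^\star_{\rm imp}\Vert_2^2\le R_{\rm imp}^\star(d)-R^\star(d)$, so that $\esp\Vert\theta^\star_{\rm imp}\Vert_2^2$ is bounded by $(\rho(1-\rho))^{-1}(\Delta_{\rm imp/miss}+\Delta_{\rm miss})$ and the already-proved bounds of \Cref{thm:imputationDD} are recycled in both regimes $d<p-1$ and $d\ge p$ (no spectral concentration for $\mathbf{W}\mathbf{W}^\top$ is needed in the high-dimensional case either). Your route is workable in principle, but to make the hard step rigorous you would most likely end up re-deriving exactly this inequality; the paper's indirect bound buys you the sharp constants for free, whereas your direct computation, if it could be pushed through, would at best reproduce them at the cost of substantially heavier random-matrix analysis.
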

Regardless of the regime,  these generalization upper bounds are composed of three terms. The first one encapsulates the (deterministic) error due to the imputation of missing values. 
The last two are stochastic errors inherent to learning, corresponding respectively to the initial condition and the variance of the SGD recursion.

When $d<p-1$, the learning error 
\begin{align*}
\frac{d}{\rho n}\frac{d}{(p-\rho(d-1)-2)}\Vert \beta^\star \Vert_2^2
        +\rho\frac{d}{n}(\sigma^2+\Vert \beta^\star \Vert_2^2)
\end{align*}
decreases fast with the number $n$ of observation. However, the error due to the imputation of missing data  $\rho(1-\rho)\frac{d(d-1)}{p(p-(d-1)-2)}\Vert \beta^\star \Vert_2^2$ becomes negligible only for extremely low-dimensional regimes ($d\ll p$) and remains significant when  $d\lesssim p$. Therefore, for such a regime, even with a lot of observations, the imputation produces a large bias, and we recommend using other methods natively capable of handling missing values  \citep[e.g., dedicated tree-based methods, see][]{stekhoven2012missforest}.

In the regime $d\gg p$, the error $\frac{1}{\rho }\frac{p}{d} \left\Vert \beta^{\star}\right\Vert _{2}^{2}$ due to missing data and the zero-imputation procedure is low. Besides, the learning error $\frac{p}{\rho^2(1-\rho)\sqrt{n}}\left\Vert \beta^{\star}\right\Vert _{2}^{2}+ \frac{\sigma^2+\Vert \beta^\star \Vert_2^2}{\sqrt{n}}$ decreases at a (slow) rate $1/\sqrt{n}$. This slow rate of learning error is due to the fact that the covariance matrix of the imputed data $\Sigma_{\imp}=\esp[\tilde X\tilde X^\top]$ has a rank equal to $d$ and eigenvalues lower bounded by $\rho(1-\rho)$. Hence imputed data are not of low rank, even for $d\gg p$. However, the upper bound \eqref{eq:GenGaussianHD} becomes dimension-free for the regime $d>\rho(1-\rho)\sqrt{n}$. In this case, the bias due to missing data and zero imputation is negligible compared to the learning error. This gives a clear practical recommendation: if the observed rate is such that $\rho(1-\rho) < d/\sqrt{n} $, then zero-imputation does not deteriorate the learning procedure for $d$ large enough.

Overall, we have fully characterized the inherent error due to missing values in learning linear latent models, and proposed an efficient predictor based on SGD strategies. This section outlines that naive imputation thus remains a competitive and relevant technique not only for high-dimensional models but also for extremely low-dimensional ones, a favorable regime that was not identified so far in the literature. Note in passing that for the latter, the error bound \eqref{eq:GenGaussianLD} of the SGD predictor enjoys both a fast rate and a negligible approximation error, which can only be marginally improved.
However, the analysis conducted so far relies on strong assumptions (finite-dimensional latent model with Gaussian random features and uniform weights, linear model). 
In the next section, we propose an extension of our high-dimensional results to a more general framework.

\section{Extension to infinite-dimensional latent space}\label{sec:ExtensionRF}

In this section, we analyze the influence of missing data in learning, when the random feature model involves  an infinite-dimensional latent space.

\subsection{The extended random feature framework}


Consider a latent space $\mathcal{Z}$ (taking the place of $\R^p$), possibly of infinite dimension. We denote by $(Z_i)_{i\in[n]}$ i.i.d.\ latent variables, distributed as a generic random variable $Z\in\mathcal{Z}$.
We only observe the variables $(X_i)_{i\in[n]}$, i.i.d.\ copies of $X\in \mathcal{X} = \R^d$, resulting from the following transformation of the latent variables. 
\begin{assumption}[General random features]\label{ass:nongaussianRF}
The input variables are assumed to be given by
\begin{equation}\label{eq:RFmodel}
    X_{ij}=\psi (Z_i,W_j), \quad \text{for all } i\in[n] \text{ and } j\in [d],
\end{equation}
where the  weights $W_1, \hdots, W_d \in  \mathcal{W}$  are i.i.d.\ drawn according to a distribution $\nu$, and where $\psi : \mathcal{Z}\times \mathcal{W} \to \mathbb{R}$. Furthermore, we assume that $\psi(Z_i,.)\in L^2(\nu)$, and there exists $L>0$ such that 
$\esp[\psi(Z,W)^2|W]\leq L^2$ almost surely.
\end{assumption}

\Cref{ass:nongaussianRF} is an extension of \Cref{eq:modelRFgaussien} considering $\mathcal{Z}=\mathcal{W}= \R^p$, $\psi(Z_i,W_j)=Z_i^\top W_j$, $\nu$ the uniform distribution on $\mathbb{S}^{p-1}$ and $L^2=1$. But, such a setting of {general random features} encompasses many more scenarios, and has been extensively studied \citep{rahimi2007random}. 


In this framework, we aim to study the linear prediction of an output $Y$ given $X$, i.e., to build a prediction function of the form $g(X)= X^\top\theta$ with $\theta\in\mathbb{R}^d$. 
Note that this type of prediction can be also obtained as a function of 
 the (latent) variable $Z$, indeed,
\begin{equation}
    g(X) =  \sum_{j=1}^d \theta_j\psi(Z,W_j) =: f(Z).
\end{equation}
We can therefore define the corresponding class of functions with input space $\mathcal{Z}$ as
\begin{align*}\label{eq:defFnu}
\F^{(d)}_{\nu}:= \Big\lbrace f:\mathcal{Z} \to \R,  f(z) = \sum_{j=1}^d \theta_j\psi(z,W_j) , \theta\in\R^d \Big\rbrace
\end{align*}
Note that the class $\F^{(d)}_{\nu}$ is random because the weights $(W_j)_{j\in [m]}$ themselves are random. 
When the number $d$ of random features tends to infinity, we can define the set $\F^{(\infty)}_\nu$ of functions which take the form:
\begin{equation}
    f(Z) = \int \alpha_f(w)\psi(Z,w)d\nu(w), 
\end{equation}
for any $\alpha_f \in L^2(\nu)$. The associated norm is given by  
\begin{align}\label{def:nu-norm}
    \left\Vert f\right\Vert _{\nu}^{2}:=\inf_{\alpha\in\mathbb{L}_{2}(\nu)} \int\left|\alpha(w)\right|^{2}d\nu(w)\quad \ensuremath{\quad}{\rm s.t.}\quad\forall z\in\mathcal{Z},f(z)=\int\alpha(w)\psi(z,w)d\nu(w).
\end{align}
This norm corresponds to an RKHS norm, we refer the interested reader to \citet{bach2017breaking} for further details. 
We denote by $R^\star(\infty)$, the risk of the best predictor belonging to the class $\F^{(\infty)}_\nu$, i.e., $R^\star(\infty)= \inf_{f\in\F^{(\infty)}_\nu} \esp \left[(Y-f(Z))^2\right]$.

The setting considered in this section includes, for instance, Fourier random features \citep{rahimi2007random,rudi2017generalization}.
\begin{example}[Fourier random
features]\label{ex:Fourierfeatures} Consider $Z\in\mathcal{Z}$ where $\mathcal{Z}$ is a compact subset of $\R^p$, $W=(A,B,C)\in\mathcal{W}= \R^p\times \R\times \{-1,1\}$ and fix
\begin{equation*}
    \psi(Z,W)=\cos(A^\top Z+B)+2C,
\end{equation*}
with $A\sim\mathcal{N}(0,I)$, $B\sim\mathcal{U}([0,2\pi])$ and $C\sim\mathcal{U}(\{-1,1\})$. 
Note that \Cref{ass:nongaussianRF} holds here with $L^2=3$. 
The resulting function class $\F_{\nu}^{(\infty)}$ described by these random features is dense for $\Vert\cdot \Vert_{\infty}$ in the space of continuous functions.
\end{example}
 As shown in this example, with a proper choice of $\nu$ and $\psi$, the class $\F_{\nu}^{(\infty)}$ can approximate any function that makes the following assumption feasible. 
\begin{assumption}\label{ass:bayes_predictor}
    The Bayes predictor $f^\star (z) = \esp{[Y|Z=z]}$ belongs to $\mathcal{F}_{\nu}^{(\infty)}$. 
\end{assumption}
Under Assumption \ref{ass:bayes_predictor},
the model $Y=f^\star(Z)+\epsilon$, is well defined, i.e., $\esp[\epsilon|Z]=0$. Remark that \Cref{ass:bayes_predictor} can be seen as a natural extension of linear model \Cref{eq:modelYgaussien}.  

\subsection{Impact of missing data and imputation in the RF framework}
The general random features $(X_i)_{ i \in [n]}$ are assumed to be corrupted by MCAR entries, whether during training and test phases. 
Our goal is to study the quantity $R^\star_{\rm imp}(d)$. Note that \eqref{eq:riskordre} can be rewritten here as 
\begin{equation*}
    R^\star(\infty)\leq R^\star(d)\leq R_{\rm miss}^\star(d)\leq R_{\rm imp}^\star(d).
\end{equation*}
Thus, introducing the quantity
\begin{align}
    &\Delta_{\rm imp}^{(\infty)}(d):= \esp R_{\rm imp}^\star(d)-R^\star(\infty)\notag\\
    &= \Delta_{\rm miss}(d)+\Delta_{\rm imp/miss}(d)+\esp R^\star(d)-R^\star(\infty),\label{eq:delta_imp_general_RF}
\end{align}
we encapsulate (i) the error $\esp R^\star(d)-R^\star(\infty)$  due to learning from a finite number of random features, (ii)  the error $\Delta_{\rm miss}(d)$ due to learning with missing inputs and, (iii)   the approximation error $\Delta_{\rm imp/miss}(d)$ due to the imputation by zero. 
\begin{theorem} \label{thm:DeltaExtension}
Under \Cref{ass:bernoulli_model,ass:nongaussianRF},
\[
\Delta_{\rm imp}^{(\infty)}(d)\leq\inf_{f\in\F_\nu^{(\infty)}}\left\{ R(f)-R^\star(\infty)+\frac{\lambda_{\rm imp}}{d}\left\Vert f\right\Vert _{\nu}^{2} \right\},
\]
with $\lambda_{\rm imp} = \frac{L^2}{\rho}$. In particular, under \Cref{ass:bayes_predictor}, 
\[
\Delta_{\rm imp}^{(\infty)}(d)\leq\frac{\lambda_{\rm imp}}{d}\left\Vert f^\star\right\Vert _{\nu}^{2} .
\]
\end{theorem}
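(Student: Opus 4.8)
The plan is to bound $R^\star_{\rm imp}(d)$ — which is an infimum over linear predictors $\tilde X^\top\theta$ on zero-imputed data — by plugging in a well-chosen (random) competitor $\theta$ built from an arbitrary target $f\in\F^{(\infty)}_\nu$, and then to take expectations over the weights $(W_j)_{j\in[d]}$ and the pattern $P$. Concretely, write $f(z)=\int\alpha(w)\psi(z,w)\,d\nu(w)$ for some $\alpha\in L^2(\nu)$ achieving (up to $\varepsilon$) the norm $\|f\|_\nu$, and consider the estimator $\theta_j := \frac{1}{d\rho}\alpha(W_j)$. The associated imputed-data predictor is $\tilde X^\top\theta = \frac{1}{d\rho}\sum_{j=1}^d P_j\,\alpha(W_j)\psi(Z,W_j)$. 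Because $\esp[P_j\mid W_j,Z]=\rho$ and the $W_j$ are i.i.d. $\nu$, this is an unbiased Monte Carlo estimate of $f(Z)=\esp[\,\alpha(W)\psi(Z,W)\mid Z\,]$: indeed $\esp[\tilde X^\top\theta \mid Z] = \frac{1}{d\rho}\cdot d\cdot \rho\cdot \esp[\alpha(W)\psi(Z,W)\mid Z] = f(Z)$. So the randomized predictor is unbiased for $f(Z)$, and the only cost relative to $R(f)$ is its variance.

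The core computation is therefore the variance of $\tilde X^\top\theta$ given $Z$ (and then integrated over $Z$). Since the summands are conditionally i.i.d. given $Z$,
\[
\esp\bigl[(\tilde X^\top\theta - f(Z))^2\bigr] = \frac{1}{d\rho^2}\,\esp\bigl[\,(P_1\alpha(W_1)\psi(Z,W_1))^2\,\bigr] - \frac{1}{d}\esp[f(Z)^2] \leq \frac{1}{d\rho^2}\,\esp\bigl[P_1^2\,\alpha(W_1)^2\,\psi(Z,W_1)^2\bigr].
\]
Now use $P_1^2=P_1$, independence of $P$ from $(Z,W)$ so that $\esp[P_1]=\rho$, then condition on $W_1$ and invoke the boundedness hypothesis $\esp[\psi(Z,W)^2\mid W]\le L^2$ from \Cref{ass:nongaussianRF}:
\[
\esp\bigl[P_1\alpha(W_1)^2\psi(Z,W_1)^2\bigr] = \rho\,\esp\bigl[\alpha(W_1)^2\,\esp[\psi(Z,W_1)^2\mid W_1]\bigr] \leq \rho L^2 \esp[\alpha(W_1)^2] = \rho L^2 \|\alpha\|_{L^2(\nu)}^2.
\]
This gives variance at most $\frac{L^2}{d\rho}\|\alpha\|_{L^2(\nu)}^2$, i.e. exactly $\frac{\lambda_{\rm imp}}{d}\|\alpha\|_{L^2(\nu)}^2$ with $\lambda_{\rm imp}=L^2/\rho$. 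Since the cross term in $R(\tilde X^\top\theta)$ expanding $\esp[(Y-\tilde X^\top\theta)^2]$ against $f$ works out — using $\esp[Y\mid Z]=f^\star(Z)$ and unbiasedness — to the bias–variance split $R(\tilde X^\top\theta) = R(f) + \esp[(\tilde X^\top\theta - f(Z))^2]$ after taking all expectations (the noise $\epsilon$ and the approximation gap $f\neq f^\star$ are absorbed into $R(f)-R^\star(\infty)$), we obtain, taking expectations over $(W_j)_j$,
\[
\esp R^\star_{\rm imp}(d) \leq \esp R\bigl(\tilde X^\top\theta\bigr) \leq R(f) + \frac{\lambda_{\rm imp}}{d}\,\|\alpha\|_{L^2(\nu)}^2.
\]
Subtracting $R^\star(\infty)$, letting $\varepsilon\to 0$ so that $\|\alpha\|^2_{L^2(\nu)}\to\|f\|_\nu^2$, and taking the infimum over $f\in\F^{(\infty)}_\nu$ yields the first claim. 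The second claim is immediate: under \Cref{ass:bayes_predictor}, $f^\star\in\F^{(\infty)}_\nu$ is admissible in the infimum and $R(f^\star)-R^\star(\infty)=0$, so $\Delta^{(\infty)}_{\rm imp}(d)\le \frac{\lambda_{\rm imp}}{d}\|f^\star\|_\nu^2$.

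The main obstacle is the bookkeeping in the bias–variance decomposition: one must be careful that $R^\star_{\rm imp}(d)$ is conditional on $(W_j)_{j\in[d]}$, that $\theta$ is a legitimate (measurable, finite) element of $\R^d$ almost surely, and that taking $\esp$ over the weights commutes with the infimum defining $R^\star_{\rm imp}$ in the right direction (we only need $\esp R^\star_{\rm imp}(d)\le \esp R(\tilde X^\top\theta)$, which holds pointwise in $(W_j)_j$). A secondary subtlety is handling the $\varepsilon$-approximation in the representation of $\|f\|_\nu$ via the infimum in \eqref{def:nu-norm}; this is routine. Everything else is the short second-moment calculation above.
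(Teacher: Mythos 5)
Your proposal is correct, and its core is the same as the paper's: plug in the Monte--Carlo competitor built from a representative $\alpha$ of $f$ and bound the resulting variance by $\frac{L^2}{d}\esp_\nu[\alpha(W)^2]$ using $\esp[\psi(Z,W)^2\mid W]\le L^2$. Where you differ is in how the mask is handled. The paper first converts the imputed-risk problem into a ridge-penalized complete-data problem via the exact identity $R_{\rm imp}(\theta)=R(\rho\theta)+\rho(1-\rho)\Vert\theta\Vert^2_{\mathrm{diag}(\Sigma)}$ (Lemma~\ref{lem_previous_work1}, from \citealp{ayme2023naive}), bounds $\mathrm{diag}(\Sigma)\preceq L^2 I_d$, and only then plugs $\theta_j=\alpha(W_j)/d$ into the penalized complete-data functional, collecting $\frac{\lambda+L^2}{d}=\frac{L^2}{\rho d}$. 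You instead work directly on $R_{\rm imp}$ with the inverse-probability-weighted competitor $\theta_j=\alpha(W_j)/(d\rho)$ and absorb the mask into a single second-moment computation using $P_j^2=P_j$ and $\esp P_j=\rho$; the two routes produce the same constant $\lambda_{\rm imp}/d=L^2/(\rho d)$, and your version is self-contained (no appeal to the prior-work identity). One point to make explicit: your unbiasedness and the vanishing of the cross term require the mask to satisfy $\esp[P_j\mid Z,Y,W_j]=\rho$, i.e.\ $P$ independent of $(Z,Y,\mathbf{W})$ and not merely of $(X,Y)$ as literally written in \Cref{ass:bernoulli_model}; this is the intended MCAR meaning (the paper's route hides the analogous requirement inside Lemma~\ref{lem_previous_work1}), so it is a matter of stating the independence you use rather than a gap. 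Your treatment of the $\varepsilon$-approximation of $\Vert f\Vert_\nu$ and of the $\mathbf{W}$-measurability of the competitor (so that the bound holds pointwise in $\mathbf{W}$ before taking expectations) is the right bookkeeping.
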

\Cref{thm:DeltaExtension} provides an upper bound on $\Delta_{\rm imp}^{(\infty)}(d)$ for general random feature models.  
In particular, the latter can be compared to a ridge bias when performing a kernel ridge regression in $\F^{(\infty)}_\nu$, and choosing the penalization strength of the order of $\lambda_{\rm imp}/d$. 
Furthermore, under \Cref{ass:bayes_predictor} (well-specified model), this bias converges to zero with a rate of $\left\Vert f^\star\right\Vert _{\nu}^{2}/(\rho d)$. By applying this result to a finite-dimensional latent model under \Cref{eq:modelRFgaussien,eq:modelYgaussien}, and remarking that $\left\Vert f^\star\right\Vert _{\nu}^{2}=p\left\Vert \beta^\star\right\Vert _{2}^{2}$, we recover the same rate $p/(\rho d)\left\Vert \beta^\star\right\Vert _{2}^{2}$ exhibited in \Cref{thm:imputationDD}. According to \Cref{thm:imputationDD}, this rate cannot be improved in general. 
More globally, missing data in RF models become harmless when learning with a large number of random features. 
It should be noted that when \Cref{ass:bayes_predictor} does not hold anymore, one can still conclude that the  bias $\Delta_{\rm imp}^{(\infty)}$
tends to zero but at an arbitrarily slow rate. 
Regarding \Cref{ass:nongaussianRF}, it remains a mild requirement; in particular, it does not require centered inputs. This underlines that there is no need to impute by the mean to obtain  $\Delta_{\rm imp}^{(\infty)}$ converging to $0$ in high-dimensional regimes. 

\subsection{SGD generalization upper bound}

In this section, we assess the generalization performance of the SGD iterates when working with an underlying general random feature model. 

\begin{assumption}\label{ass:momentinf}
    There exists $\ell>0$ such that, almost surely
    \begin{equation*}
        \esp[\psi(Z,W)^2|W]\geq \ell^2.
    \end{equation*}
\end{assumption}
This assumption holds when features are renormalized (i.e., when $\ell=L=1$) or in the case of random Fourier features (see \Cref{ex:Fourierfeatures}) with $\ell=1$.

\begin{assumption}\label{ass:kurtosis}
Assume that almost surely,
    \begin{equation*}
        \vert\psi(Z,W)\vert^2\leq \kappa L^2.
    \end{equation*}
    
\end{assumption}
This assumption is satisfied in \Cref{ex:Fourierfeatures} with $\kappa=2$ and $L^2=3$.

\begin{theorem}
\label{thm:genExtension}
Under the general framework covered by \Cref{ass:nongaussianRF,ass:bayes_predictor,ass:momentinf,ass:kurtosis} with MCAR data (Assumption \ref{ass:bernoulli_model}), the SGD recursion with Polyak-Ruppert averaging and step size $\gamma=1/(\kappa d \sqrt{n})$  satisfy
\begin{align}
    \label{eq:GenNonGaussianHD}
        \esp[R_{\imp}(\bar\theta)-R^\star(\infty)] \lesssim \frac{L^2}{\rho d} \left\Vert f^{\star}\right\Vert _{\nu}^{2} +\frac{L^2}{\ell^2}\frac{L^2}{\rho^2(1-\rho)\sqrt{n}}\left\Vert f^{\star}\right\Vert _{\nu}^{2}+ \frac{\kappa L^2\esp Y^2}{\sqrt{n}}.
\end{align}
\end{theorem}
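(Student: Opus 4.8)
The plan is to decompose the excess risk $\esp[R_{\imp}(\bar\theta) - R^\star(\infty)]$ into a bias term, already controlled by \Cref{thm:DeltaExtension}, and an optimization/statistical term coming from the averaged SGD recursion run on the zero-imputed inputs $\tilde X$. Write
\[
\esp[R_{\imp}(\bar\theta) - R^\star(\infty)] = \esp[R_{\imp}(\bar\theta) - R_{\imp}^\star(d)] + \esp[R_{\imp}^\star(d) - R^\star(\infty)],
\]
where the second term is exactly $\Delta_{\rm imp}^{(\infty)}(d)$ and is bounded by $\frac{L^2}{\rho d}\Vert f^\star\Vert_\nu^2$ under \Cref{ass:bayes_predictor} by \Cref{thm:DeltaExtension}. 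This immediately yields the first term on the right-hand side of \eqref{eq:GenNonGaussianHD}. It remains to bound the SGD excess risk $\esp[R_{\imp}(\bar\theta) - R_{\imp}^\star(d)]$ relative to the best linear predictor $\theta_{\imp}^\star$ on imputed data.

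The next step is to apply a standard finite-sample bound for constant-step-size averaged SGD on a least-squares objective with a (generally) misspecified model — this is the Bach–Moulines / Dieuleveut–Bach type analysis. The key structural facts about the imputed covariance $\Sigma_{\imp} = \esp[\tilde X \tilde X^\top]$ are: (i) boundedness of the per-step second moment, $\norm{\tilde X}^2 \le \norm{X}^2$ and $\esp[\norm{\tilde X}^2] \le \rho d L^2$, together with the almost-sure bound $\tilde X_j^2 \le \kappa L^2$ from \Cref{ass:kurtosis}, which is what makes the step size $\gamma = 1/(\kappa d\sqrt n)$ admissible ($\gamma \le 1/(2R^2)$-type condition with $R^2 \asymp \kappa L^2 d$); (ii) a lower bound on the smallest eigenvalue, $\lambda_{\min}(\Sigma_{\imp}) \ge \rho(1-\rho)\ell^2$ — the diagonal of $\Sigma_{\imp}$ carries a $\rho(1-\rho)\esp[\psi^2|W]$ term (Bernoulli variance of $P_j$) while off-diagonal correlations are damped by $\rho^2$ — so $\Sigma_{\imp}$ is well-conditioned and in particular invertible. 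The generic averaged-SGD bound then reads, schematically,
\[
\esp[R_{\imp}(\bar\theta) - R_{\imp}^\star(d)] \lesssim \frac{\norm{\theta_{\imp}^\star}_{\Sigma_{\imp}^{-1}\text{-weighted init}}^2}{\gamma n} + \frac{\gamma \,\mathrm{tr}(\Sigma_{\imp})\,\sigma_{\imp}^2}{1} + (\text{misspecification cross term}),
\]
where $\sigma_{\imp}^2$ is the residual variance of the best imputed-linear predictor, bounded by $\esp Y^2$ up to constants, and $\mathrm{tr}(\Sigma_{\imp}) \le \rho d L^2$. Plugging $\gamma = 1/(\kappa d\sqrt n)$ makes the variance term $\lesssim \kappa L^2 \esp Y^2/\sqrt n$ (third term of \eqref{eq:GenNonGaussianHD}), and the initial-condition term becomes $\lesssim \kappa d\sqrt n \, \norm{\theta_{\imp}^\star}^2 / n$; one then converts $\norm{\theta_{\imp}^\star}^2$ into $\norm{f^\star}_\nu^2$ using $\norm{\theta_{\imp}^\star}^2 \lesssim \frac{1}{\rho^2(1-\rho)^2 \ell^2}\cdot(\text{something}) \cdot \norm{f^\star}_\nu^2/d$ via the eigenvalue lower bound and the characterization of $\theta_{\imp}^\star$ from the proof of \Cref{thm:DeltaExtension}, producing the middle term $\frac{L^2}{\ell^2}\cdot\frac{L^2}{\rho^2(1-\rho)\sqrt n}\norm{f^\star}_\nu^2$ after tracking the $L,\ell,\rho$ dependencies.

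The main obstacle I expect is twofold. First, controlling $\norm{\theta_{\imp}^\star}$ (equivalently the initial-condition term) tightly enough to get the stated $L^2/\ell^2$ and $\rho^2(1-\rho)$ dependence: this requires relating the minimizer over zero-imputed linear predictors to $f^\star$ through $\Sigma_{\imp}^{-1}$, using both $\lambda_{\min}(\Sigma_{\imp}) \gtrsim \rho(1-\rho)\ell^2$ and an upper bound on $\norm{\esp[\tilde X Y]}$ in terms of $\norm{f^\star}_\nu$, and carefully not losing factors of $d$. Second, the model is genuinely misspecified (the best imputed-linear predictor does not match $Y$, and moreover $\esp[\tilde X(Y - \tilde X^\top\theta_{\imp}^\star)] = 0$ but the fourth-moment/noise-covariance structure is not the "well-specified" one), so one must invoke the version of the averaged-SGD theorem that tolerates misspecification and bound the extra cross term — controlled via \Cref{ass:kurtosis} (a kurtosis-type condition on the coordinates of $\tilde X$) — without it dominating. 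Once those two points are handled, assembling the three contributions gives \eqref{eq:GenNonGaussianHD}.
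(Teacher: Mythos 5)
Your overall skeleton matches the paper's: decompose into the imputation bias $\Delta_{\rm imp}^{(\infty)}(d)\lesssim L^2\Vert f^\star\Vert_\nu^2/(\rho d)$ from \Cref{thm:DeltaExtension} plus an averaged-SGD excess-risk bound with $R^2\asymp \kappa L^2 d$ (so that $\gamma=1/(\kappa d\sqrt n)$ is admissible), with the variance term bounded via $\esp Y^2$. The paper indeed proceeds this way, invoking a black-box constant-step-size averaged SGD bound (\Cref{thm:SGD_bound}) that tolerates the misspecification term $R_\imp^\star(d)-R^\star(d)$ additively.

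However, there is a genuine gap at precisely the point you flag as your "main obstacle": the control of $\esp\Vert\theta_\imp^\star\Vert_2^2$. The route you sketch --- invert $\Sigma_\imp$ using $\lambda_{\min}(\Sigma_\imp)\gtrsim\rho(1-\rho)\ell^2$ together with a bound on $\Vert\esp[\tilde XY]\Vert$ --- cannot deliver the required $1/d$ decay. Coordinate-wise, $|\esp[\tilde X_jY]|\le\rho L\sqrt{\esp Y^2}$, so $\Vert\esp[\tilde XY]\Vert\lesssim\rho L\sqrt{d\,\esp Y^2}$ and the eigenvalue bound only gives $\Vert\theta_\imp^\star\Vert_2^2\lesssim d\,L^2\esp Y^2/((1-\rho)^2\ell^4)$, which is off by a factor of order $d^2$; even the sharper bound $\Vert\theta_\imp^\star\Vert_2^2\le\lambda_{\min}(\Sigma_\imp)^{-1}\Vert\theta_\imp^\star\Vert_{\Sigma_\imp}^2\le\lambda_{\min}(\Sigma_\imp)^{-1}(\esp Y^2-R_\imp^\star(d))$ has no decay in $d$, so the initial-condition term $\kappa L^2 d\,\Vert\theta_\imp^\star\Vert_2^2/\sqrt n$ would blow up with $d$. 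The paper's resolving idea is different and is the crux of the proof: the exact decomposition $R_{\rm imp}(\theta)=R(\rho\theta)+\rho(1-\rho)\Vert\theta\Vert^2_{\mathrm{diag}(\Sigma)}$ (\Cref{lem_previous_work1}), evaluated at $\theta=\theta_\imp^\star$ and combined with $R(\rho\theta_\imp^\star)\ge R^\star(d)$, yields $\rho(1-\rho)\esp\Vert\theta_\imp^\star\Vert^2_{\mathrm{diag}(\Sigma)}\le\Delta_{\rm imp/miss}+\Delta_{\rm miss}\le\Delta_{\rm imp}^{(\infty)}(d)$; then $\mathrm{diag}(\Sigma)\succeq\ell^2 I$ (\Cref{ass:momentinf}) and \Cref{thm:DeltaExtension} give $\esp\Vert\theta_\imp^\star\Vert_2^2\lesssim L^2\Vert f^\star\Vert_\nu^2/(\ell^2\rho^2(1-\rho)d)$, whose $1/d$ factor exactly cancels the $d$ in the SGD initial-condition term and produces the middle term of \eqref{eq:GenNonGaussianHD}. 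In short, the norm of the imputed optimum must be bounded by the imputation bias itself, not through the conditioning of $\Sigma_\imp$; without this idea (or an equivalent one, e.g.\ exploiting the ridge representation \eqref{eq:BiasRidgeForm} with the comparator $\theta_j=\alpha(W_j)/d$), your plan does not go through.
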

Theorem~\ref{thm:genExtension}  outlines that, even for very general random features model (with possibly a latent space of infinite dimension),
the impact of (i) the finite number of features, (ii) the missing data, and  (iii) the imputation by $0$, represented by the quantity $\frac{L^2}{\rho d} \left\Vert f^{\star}\right\Vert _{\nu}^{2}$, remains negligible in high dimension. Similarly to \eqref{eq:GenGaussianHD}, the learning error $\frac{L^2}{\ell^2}\frac{L^2}{\rho^2(1-\rho)\sqrt{n}}\left\Vert f^{\star}\right\Vert _{\nu}^{2}+ \frac{\kappa L^2 \esp Y^2}{\sqrt{n}}$ decreases with a slow rate. More precisely, when $d\gg \frac{L^2}{\ell^2}\rho(1-\rho)\sqrt{n}$, the upper bound is dimension free and the bias $\Delta_{\rm imp}^{(\infty)}$ due to imputation becomes completely negligible. Note that, for renormalized features ($L^2=l^2=1$), the transition from a low-dimensional regime to a high-dimensional one is given by $d= \rho(1-\rho)\sqrt{n}$ (as for \Cref{thm:sgd_rf_gaussian}), which is very easy to evaluate. 

\section{Beyond the MCAR assumption}\label{sec:MNAR}
To go beyond the MCAR missing data framework used in the previous section, we now consider missing not at random (MNAR) data, in which the missingness indicator of any variable can depend on the (possibly missing) value of the variable. 
In particular, we assume that the missing patterns $(P_i)_i$ depend on the latent features $(Z_i)_i$, which results in a MNAR scenario. 

\begin{assumption}\label{ass:RFmodelNAR}
    Suppose that $P$ and $Y$ are independent. Furthermore, consider that there exists  an i.i.d. sequence $(\uline{W}_j')_{j\in[d]}$ i.i.d.\ drawn according to some distribution $\mu$ supported on a set $\mathcal{W'}$ and assume that $P_1,\dots,P_d|Z,W_1',\dots,W_d'$ are independent. We assume that the sequences $(\uline{W}_j')_{j\in[d]}$ and $(W_j)_{j\in[d]}$ are independent and that
\begin{equation*}
    \mathbb{P}\left(P_j|Z,\uline{W}_j'\right)=\phi (Z,\uline{W}_j'), \quad \text{for all } j\in [d],
\end{equation*}
 where $\phi : \mathcal{Z}\times \mathcal{W'} \to (0,1]$ is a continuous function. 
\end{assumption}

The following result shows that the asymptotic property of   $R^\star_{\rm imp}$ in a MCAR setting (\Cref{thm:imputationDD}) remains valid in the MNAR setting of \Cref{ass:RFmodelNAR}.
\begin{theorem}\label{thm:convergenceMNAR}
Under \Cref{ass:RFmodelNAR}, consider one of the following settings:
    \begin{enumerate}
        \item[$(i)$] (finite-dimensional latent space) Under \Cref{eq:modelRFgaussien,eq:modelYgaussien}, assume the distribution of the missing mechanism to be given for $\uline{W}'=(W_0',W')\in\R\times\R^d$, by $\phi (Z,\uline{W}')=\Phi(Z^\top W'+W_0')$ with $\Phi$ a Lipschitz function. Additionally, $0$ is required to belong to the support of $W'$.
        \item[$(ii)$] (general latent space) Under \Cref{ass:nongaussianRF,ass:bayes_predictor}, assume in addition that $\mathcal{Z}$ is compact, $f^\star$  continuous and $\mathcal{F}^{(\nu)}$ dense in the space of continuous functions equipped with the norm $\Vert \cdot \Vert_{\infty}$. 
    \end{enumerate}
Then, almost surely,
\begin{equation*}
    \lim_{d\to +\infty} R^\star_{\rm imp}(d)= R^\star(\infty).
\end{equation*}  
As a consequence, $$\lim_{d}\Delta_{\rm imp}^{(\infty)}(d)=\lim_{d}\Delta_{\rm miss}(d)=\lim_{d}\Delta_{\rm miss/imp}(d)=0.$$
\end{theorem}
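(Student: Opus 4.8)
The plan is to reduce the almost-sure statement to a statement about \emph{expected} risks, and then, separately in the two settings, to exhibit explicit admissible linear predictors on the $d$ imputed features whose risk tends to $R^\star(\infty)$. Viewing the weights as an infinite i.i.d.\ sequence $W_1,W_2,\dots$ (and $\uline{W}'_1,\uline{W}'_2,\dots$) with $R^\star_{\rm imp}(d)$ built from the first $d$, the map $d\mapsto R^\star_{\rm imp}(d)$ is non-increasing (in $\inf_{\theta\in\R^{d+1}}$ one can always set the last coordinate of $\theta$ to $0$) and takes values in $[R^\star(\infty),\esp Y^2]$ with $\esp Y^2<\infty$; hence it converges a.s.\ to some $L_\infty\ge R^\star(\infty)$, and by monotone convergence $\esp L_\infty=\lim_d\esp R^\star_{\rm imp}(d)$. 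Since $R^\star(\infty)\le R^\star(d)\le R^\star_{\rm miss}(d)\le R^\star_{\rm imp}(d)$, it suffices to prove $\limsup_d\esp R^\star_{\rm imp}(d)\le R^\star(\infty)$: this forces $L_\infty=R^\star(\infty)$ a.s.\ (the claimed limit), and since by \eqref{eq:delta_imp_general_RF} the quantity $\Delta_{\rm imp}^{(\infty)}(d)=\esp R^\star_{\rm imp}(d)-R^\star(\infty)$ is a sum of the three nonnegative terms $\esp R^\star(d)-R^\star(\infty)$, $\Delta_{\rm miss}(d)$ and $\Delta_{\rm imp/miss}(d)$, each of them vanishes, giving the stated consequences.

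For the general latent space (case $(ii)$) I would use a ``pre-compensation'' predictor. Set $\bar\phi(z):=\int\phi(z,w')\,d\mu(w')$, which is continuous and bounded away from $0$ on $\mathcal{Z}$ by continuity and positivity of $\phi$ together with compactness of $\mathcal{Z}$. For $g\in L^2(\nu)$ take the purely $W$-measurable predictor $\theta^{(d)}_j=g(W_j)/d$. Writing $P_j=\ind\{U_j\le\phi(Z,\uline{W}'_j)\}$ with $U_j$ uniform and independent, $\tilde X^\top\theta^{(d)}=\frac1d\sum_{j\le d}g(W_j)\psi(Z,W_j)\ind\{U_j\le\phi(Z,\uline{W}'_j)\}$ is an empirical mean whose conditional expectation given $Z$ equals $\bar\phi(Z)e_g(Z)$, with $e_g(z):=\int g(w)\psi(z,w)\,d\nu(w)$, and whose conditional variance is $O\!\big(L^2\|g\|_{L^2(\nu)}^2/d\big)$ by \Cref{ass:nongaussianRF}; hence $\esp[(Y-\tilde X^\top\theta^{(d)})^2]\to\esp[(Y-\bar\phi(Z)e_g(Z))^2]$ as $d\to\infty$. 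It then remains to choose $g$ so that $\bar\phi\,e_g$ is close to $f^\star$ in $L^2(P_Z)$: as $f^\star/\bar\phi$ is continuous on the compact $\mathcal{Z}$ and $\F_\nu^{(\infty)}=\{e_g:g\in L^2(\nu)\}$ is dense in $C(\mathcal{Z})$ for $\|\cdot\|_\infty$, one can make $e_g$ arbitrarily $\|\cdot\|_\infty$-close to $f^\star/\bar\phi$, hence $\bar\phi\,e_g$ arbitrarily $L^2(P_Z)$-close to $f^\star$, driving the limit to $\esp[(Y-f^\star(Z))^2]=R^\star(\infty)$.

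The finite-dimensional case $(i)$ needs a different device, because $\F_\nu^{(\infty)}$ then contains only linear maps and pre-compensation is unavailable; instead I would use that $\phi(z,\uline{w}')=\Phi(z^\top w'+w'_0)$ becomes $z$-free when $w'=0$. Fix $\eta>0$ and keep only the features $S_\eta(d)=\{\,j\le d:\|W'_j\|\le\eta,\ |W'_{0,j}|\le\eta\,\}$; since $0$ lies in the support of the missingness weights, $m:=|S_\eta(d)|\to\infty$ a.s.\ by the law of large numbers, and on $S_\eta(d)$ the surrogate probabilities $\rho_j:=\Phi(W'_{0,j})$ satisfy $\rho_j\ge\inf_{|t|\le\eta}\Phi(t)=:\rho_\eta>0$. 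Using $\esp[WW^\top]=p^{-1}I_p$ for $W\sim\mathcal{U}(\mathbb{S}^{p-1})$, take $\theta^{(d)}_j=\frac{p}{m}\frac{W_j^\top\beta^\star}{\rho_j}\ind\{j\in S_\eta(d)\}$. Against the \emph{MCAR surrogate} masks $P^{\rm surr}_j\sim\mathcal{B}(\rho_j)$, independent of $Z$ and coupled to $P_j$ through $U_j$, a direct law-of-large-numbers computation (of the kind behind \Cref{prop:Riskmissing} and \Cref{thm:imputationDD}) gives $\esp[(Y-\sum_j\theta^{(d)}_jP^{\rm surr}_jX_j)^2]\to\sigma^2=R^\star(\infty)$. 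Passing to the true masks, $|P_j-P^{\rm surr}_j|\le\ind\{|U_j-\rho_j|\le L_\Phi\eta\|Z\|\}$ (with $L_\Phi$ the Lipschitz constant of $\Phi$) and $X_j^2\le\|Z\|^2$, so the excess satisfies $\esp[(\sum_j\theta^{(d)}_j(P_j-P^{\rm surr}_j)X_j)^2]\lesssim(m\|\theta^{(d)}\|_2^2)\,L_\Phi\,\eta\,\esp\|Z\|^3$; since $m\|\theta^{(d)}\|_2^2=O(p\|\beta^\star\|_2^2/\rho_\eta^2)$ stays bounded in $d$, this is $O(\eta)$, and letting $d\to\infty$ and then $\eta\to 0$ yields $\limsup_d\esp R^\star_{\rm imp}(d)\le R^\star(\infty)$.

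I expect the real obstacle to be this last coupling in case $(i)$: controlling, uniformly in $d$, the discrepancy between the MNAR mechanism $\Phi(Z^\top W'+W'_0)$ and its MCAR surrogate $\Phi(W'_0)$ --- which is exactly where Lipschitzness of $\Phi$, the support condition on the missingness weights, and the light tails of $\|Z\|$ all enter --- and ensuring that the resulting errors-in-variables bias does not grow with the number of selected features. A secondary, recurring subtlety is that the imputation predictor never observes the latent randomness driving the missingness, so one cannot simply ``keep the safely observed features''; this is precisely why case $(ii)$ goes through pre-compensation rather than exact recovery, and why case $(i)$ needs the surrogate-coupling estimate above.
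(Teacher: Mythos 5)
Your plan is correct, and it splits naturally into "same as the paper" and "genuinely different". For setting $(ii)$ you essentially rediscover the paper's argument: the paper also plugs in coefficients $\theta_j=\alpha(W_j,W'_j)/d$, does a bias--variance split conditional on $(Z,\mathbf{W},\mathbf{W}')$, and in case $(ii)$ takes the $w'$-weight $\beta\equiv1$, so its compensator is exactly your $\bar\phi(z)=\int\phi(z,w')d\mu(w')$, concluding by density of $\F^{(\infty)}_\nu$ and continuity of the risk; your only deviation is to reduce the almost-sure claim to expected risks via monotonicity of $d\mapsto R^\star_{\rm imp}(d)$ and the fact that $R^\star(\infty)$ is deterministic, instead of the paper's almost-sure law of large numbers applied to the conditional risk --- a legitimate and clean alternative. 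For setting $(i)$ you depart from the paper: the paper stays inside the same product-class bound, replacing $\bar\phi$ by the normalized average $h_\epsilon(z)=\int_{\Vert w'\Vert\le\epsilon}\phi(z,(w',w_0'))d\mu\big/\int_{\Vert w'\Vert\le\epsilon}\Phi(w_0')d\mu$, using Lipschitzness to get $|h_\epsilon(Z)-I|\le C\epsilon\Vert Z\Vert$ and taking the \emph{linear} compensator $f=f^\star/I$, whereas you select the nearly-MCAR features, reweight them by $1/\Phi(W'_{0,j})$, and couple the true masks with Bernoulli surrogates. Both routes rest on the same two ingredients ($0$ in the support of $W'$, Lipschitzness of $\Phi$), and your coupling estimate $\esp[(\sum_j\theta^{(d)}_j(P_j-P^{\rm surr}_j)X_j)^2]\lesssim m\Vert\theta^{(d)}\Vert_2^2\,\eta\,\esp\Vert Z\Vert^3$ is sound (Cauchy--Schwarz over the $m$ selected terms, then integrate the uniform $U_j$); the paper's version is shorter because the compensation is an integral over $w'$ rather than feature-by-feature inverse weighting.

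One step in your case $(i)$ does not follow from the stated hypotheses: you select on $\{\Vert W'_j\Vert\le\eta,\ |W'_{0,j}|\le\eta\}$ and invoke the support condition to get $|S_\eta(d)|\to\infty$, but the assumption only places $0$ in the support of the slope $W'$, not of the intercept $W'_0$ (e.g.\ $W'_0\equiv3$ is allowed), so your selection event may have probability zero and $m$ would stay at $0$. The fix is immediate: truncate the intercept at a fixed level $M$ with $\P(\Vert W'\Vert\le\eta,\,|W'_0|\le M)>0$ (such an $M$ exists because $\P(\Vert W'\Vert\le\eta)>0$), and replace $\rho_\eta$ by $\rho_M:=\inf_{|t|\le M}\Phi(t)>0$, which is positive since $\phi$ takes values in $(0,1]$ and $\Phi$ is continuous; then $m\Vert\theta^{(d)}\Vert_2^2$ remains bounded and the per-feature discrepancy is still $O(\eta\Vert Z\Vert)$, so the rest of your argument (surrogate risk $\to\sigma^2$ by the law of large numbers over the selected $W_j$, which stay i.i.d.\ uniform by independence of $(W_j)$ and $(W'_j)$, then $\eta\to0$) goes through unchanged.
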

This result shows that the benign impact of missing data and  imputation on predictive performances in high dimension holds true outside the MCAR assumption, even for missing scenarios (MNAR) often considered as more challenging. Let us consider two non-trivial examples.
\begin{example}[Gaussian random features with logistic model]\label{ex:logisticNAR}
Consider the finite-dimensional latent model of \Cref{eq:modelRFgaussien,eq:modelYgaussien}, where $\mathcal{Z}= \R^p$ and $\uline{W}_j'=(W_{0j}',W_j')\in\R\times\R^d$, and assume that the conditional distribution of the missing patterns $P_j$ 
is given by
\begin{equation*}
    \mathbb{P}\left(P_j|Z,\uline{W}_j'\right)=\Phi(W_{0j}'+W_j'^\top Z)= \frac{1}{1+e^{W_{0j}'+W_j'^\top Z}}.
\end{equation*}
In this example, the features $X_j$ are assumed to be missing according to a logistic model on the latent variables $Z$. In this setting, we can show that \Cref{thm:convergenceMNAR} $(i)$ applies, since in particular, $0$ belongs the support of $W'_j$. Note that, if $W_j'=0$ almost surely then this model corresponds to a MCAR scenarios but with different proportion of missing values for each feature. The model is no longer MCAR as soon as random variable $W_j'$ is not exactly equal to $0$. 
\end{example}

\begin{example}[Fourier random features for any function $\phi$]
Let us consider the framework of \Cref{ex:Fourierfeatures} with a continuous function $f^\star$. Then \Cref{thm:convergenceMNAR} $(ii)$ applies for any continuous function $\phi$ (in particular, we can consider the logistic model of \Cref{ex:logisticNAR} without any condition on $W'$).
\end{example}
For these two MNAR examples, $$\lim_{d}\Delta_{\rm imp}^{(\infty)}(d)=\lim_{d}\Delta_{\rm miss}(d)=\lim_{d}\Delta_{\rm miss/imp}(d)=0,$$
which means that missing values and imputations vanishes with the dimension.

\section{Conclusion}
Thanks to the rigorous framework of random features models, we prove that naive imputation is relevant both in high- and low-dimensional regimes. 
In particular, the bias induced by imputation is negligible compared to the one induced by missing data, therefore showing that zero-imputation strategies may lead to near-optimal predictors. 
Following this analysis, we prove that an SGD procedure trained on zero-imputed data reaches near-optimal rate of consistency in low-dimensional regimes, but still suffer from slow rates in high-dimensional ones. Obtaining fast rates for the latter setting is still an open and interesting question.
Whilst our analysis extends beyond the MCAR scenario, rates of consistency for SGD procedures remain to be derived for such settings.

\bibliography{references}
\bibliographystyle{plainnat}

\newpage
\appendix
\onecolumn

\section{Notations}\label{app:notations}

For two vectors (or matrices) $a,b$, we denote by $a\odot b$ the Hadamard product (or component-wise product). $[n]=\left\{1,2,...,n\right\}$. For two symmetric matrices $A$ and $B$, $A\preceq B$ means that $B-A$ is positive semi-definite. The symbol $\lesssim$ denotes the inequality up to a universal constant.  Table~\ref{tab:notations} summarizes the  notations used throughout the paper and appendix.

\begin{table}[h!]
    \centering
        \caption{Notations}
            \label{tab:notations}

    \begin{tabular}{l l} \toprule
         $\Vert u\Vert_M^2$ & $u^\top M u$ the semi-norm induced by a positive matrix $M$ \\
         $\Vert M\Vert_{\mathrm{Fr}}^2$ & The Frobenius norm of $M$ \\
         $\mathrm{Tr}(M)$ & The sum of diagonal elements of $M$ \\
         $M+\lambda$ & Abuse of notation for $M+\lambda I_p$ \\
         $M^\dagger$ & The Moore-Penrose pseudoinverse of $M$ \\
         $\mathbb{S}_p$ & The unit sphere of $\R^p$\\
         $\mathrm{Span}(u_j,j\in[k])$& The linear span induced by $(u_j)_{j\in[k]}$ \\
         $P$ & The mask\\ 
         $\mathbf{W}$ & $(W_1,\dots,W_d)^\top$ the matrix of weights\\
         $R(\theta)$ & the risk of linear predictor $\theta$ on complete data\\
         $R_{\rm imp}(\theta)$ & the risk of linear predictor $\theta$ on imputed data\\
         $\theta^\star$ &Best linear predictor on complete data \\
         $\theta_{\imp}^\star$&Best linear predictor on imputed data\\

          $\Sigma$ & $\esp [XX^\top|\mathbf{W}]$  \\
          $\lambda_j$ & eigenvalues of $\Sigma$  \\
          $u_j$ & eigendirections of $\Sigma$  \\

         $\rho$ & Theoretical proportion of observed entries \\
         $L^2(\nu)$ & The set of two square $\nu$ integrable functions  \\
         \bottomrule
    \end{tabular}
    \end{table}

\section{Preliminary results - random matrices}
\label{app:RandomMatrix}
We provide here a reminder on singular values decomposition and Moore-Penrose pseudoinverse. We can found these results and more on linear algebra in \citet[][appendix]{giraud2021introduction}. 
\begin{theorem}\label{thm:svd}
   Any $n\times p$ real-valued matrix of rank $r$ can be decomposed as 
   \begin{equation*}
       A=\sum_{j=1}^r \sigma_j u_j v_j^\top,
   \end{equation*}
   where 
   \begin{itemize}
       \item $\sigma_1\geq\dots\geq\sigma_r>0$,
       \item $(\sigma_1,\dots,\sigma_r)$ are the nonzero eigenvalues of $A^\top A$ and $AA^\top$, and 
       \item $(u_1,\dots,u_r)$ and $(v_1,\dots,v_r)$ are two orthonormal families of $\R^n$ and $\R^p$, such that $AA^\top u_j=\sigma_j^2u_j$ and $A^\top Av_j=\sigma_j^2v_j$. 
   \end{itemize}
   Furthermore, the Moore-Penrose pseudo inverse defined as 
   \begin{equation*}
       A^\dagger=\sum_{j=1}^r \sigma_j^{-1} v_j u_j^\top,
   \end{equation*}
   satisfied
   \begin{enumerate}
       \item $A^\dagger A$ is the orthogonal projector on lines of $A$,
       \item $A A^\dagger $ is the orthogonal projector on columns of $A$,
       \item $(AO)^\dagger=O^\top A^\dagger$ for any orthogonal matrix $O$.
   \end{enumerate}
\end{theorem}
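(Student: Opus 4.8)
The plan is to build the decomposition from the spectral theorem applied to the symmetric positive semi-definite matrix $A^\top A$, then read off all the pseudoinverse properties as short algebraic consequences. First I would diagonalize $A^\top A \in \R^{p\times p}$: by the spectral theorem there is an orthonormal basis $v_1,\dots,v_p$ of $\R^p$ with $A^\top A v_j = \lambda_j v_j$ and $\lambda_1 \geq \dots \geq \lambda_p \geq 0$ (nonnegativity follows from $\lambda_j = v_j^\top A^\top A v_j = \|A v_j\|^2 \geq 0$). Since $\mathrm{rank}(A^\top A) = \mathrm{rank}(A) = r$, exactly the first $r$ eigenvalues are strictly positive; set $\sigma_j := \sqrt{\lambda_j} > 0$ for $j \in [r]$, which gives the ordering $\sigma_1 \geq \dots \geq \sigma_r > 0$ and identifies the $\sigma_j^2$ as the nonzero eigenvalues of $A^\top A$ with eigenvectors $v_j$.

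Next I would define $u_j := \sigma_j^{-1} A v_j$ for $j \in [r]$ and check the three remaining claims of the first part. Orthonormality follows from
\[
u_i^\top u_j = \frac{1}{\sigma_i \sigma_j} v_i^\top A^\top A v_j = \frac{\sigma_j^2}{\sigma_i \sigma_j} v_i^\top v_j = \delta_{ij}.
\]
For the reconstruction, note that $\|A v_j\|^2 = \lambda_j = 0$ for $j > r$, so $A v_j = 0$ there; expanding $A$ in the orthonormal basis via $I_p = \sum_{j=1}^p v_j v_j^\top$ gives $A = \sum_{j=1}^p (A v_j) v_j^\top = \sum_{j=1}^r \sigma_j u_j v_j^\top$. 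Finally the left eigenvector relation is
\[
A A^\top u_j = \frac{1}{\sigma_j} A A^\top A v_j = \frac{1}{\sigma_j} A (\sigma_j^2 v_j) = \sigma_j^2 u_j,
\]
which also shows that the $\sigma_j^2$ are the nonzero eigenvalues of $A A^\top$.

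For the pseudoinverse part I would compute the two products directly from the orthonormality relations. Using $u_i^\top u_j = v_i^\top v_j = \delta_{ij}$,
\[
A^\dagger A = \sum_{i,j=1}^r \sigma_i^{-1}\sigma_j\, v_i (u_i^\top u_j) v_j^\top = \sum_{j=1}^r v_j v_j^\top, \qquad A A^\dagger = \sum_{j=1}^r u_j u_j^\top,
\]
each a sum of orthogonal rank-one projectors onto an orthonormal family, hence the orthogonal projector onto $\mathrm{Span}(v_j, j\in[r])$ and $\mathrm{Span}(u_j, j\in[r])$ respectively. It then remains to identify these spans with the row space and column space of $A$: from $A = \sum_j \sigma_j u_j v_j^\top$ every column of $A$ lies in $\mathrm{Span}(u_j)$ and, since both spaces have dimension $r = \mathrm{rank}(A)$, they coincide (symmetrically for rows and $v_j$). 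For the orthogonal-invariance property, I would observe that $A O = \sum_j \sigma_j u_j (O^\top v_j)^\top$ is itself an SVD of $AO$, because the $O^\top v_j$ remain orthonormal; reading off its pseudoinverse gives $(AO)^\dagger = \sum_j \sigma_j^{-1} (O^\top v_j) u_j^\top = O^\top A^\dagger$.

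The only nonelementary ingredient is the spectral theorem invoked at the outset; everything afterward is bookkeeping with the orthonormality relations, so I expect the main care to go into justifying that $\mathrm{Span}(u_j,j\in[r])$ and $\mathrm{Span}(v_j,j\in[r])$ are exactly the column and row spaces (the dimension-counting argument above), rather than merely contained in them, since the projector statements depend on that identification.
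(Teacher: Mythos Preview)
Your argument is a correct and fully standard derivation of the SVD from the spectral theorem for $A^\top A$, together with the usual bookkeeping for the pseudoinverse; the identification of $\mathrm{Span}(u_j)$ and $\mathrm{Span}(v_j)$ with the column and row spaces by dimension counting is the right way to close the projector claims, and the orthogonal-invariance property follows exactly as you say once one accepts that the pseudoinverse does not depend on the particular SVD chosen (equivalently, that it is characterized by the four Penrose conditions). The only implicit step worth making explicit is precisely this uniqueness, since the paper defines $A^\dagger$ by the SVD formula rather than axiomatically.

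As for comparison with the paper: there is nothing to compare. The paper does not prove this theorem; it is stated in the appendix as a background reminder and referred to the appendix of \citet{giraud2021introduction} for a proof. Your write-up is therefore more than what the paper itself supplies.
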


For any function $f:\R\longmapsto\R$, and any positive matrix $A\in\R^{d\times d}$ with the following spectral decomposition $A=\sum_{j=1}^d \lambda_j v_jv_j^\top$, we denote by $f(A)$ the matrix corresponding to the spectral decomposition 
\begin{equation*}
    f(A):=\sum_{j=1}^d f(\lambda_j) v_jv_j^\top.
\end{equation*}
\begin{theorem}[Jensen inequality for random positive matrix, see Theorem 2.10 in \cite{carlen2010trace}]\label{thm:JensenTrace}
    Let $A$ be a random positive matrix. For all convex functions $f$, we have  
    \begin{equation*}
        \mathrm{Tr}(f(\esp A))\leq\esp\mathrm{Tr}(f(A)).
    \end{equation*}
\end{theorem}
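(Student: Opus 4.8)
The plan is to reduce the matrix statement to the scalar Jensen inequality by establishing that the functional $\Phi(A) := \mathrm{Tr}(f(A))$ is convex on the vector space $\mathcal{S}_d$ of symmetric $d\times d$ matrices. Once convexity of $\Phi$ is in hand, the claim is immediate from the finite-dimensional Jensen inequality: since $\mathcal{S}_d\cong\R^{d(d+1)/2}$ is a finite-dimensional vector space, any convex $\Phi:\mathcal{S}_d\to\R$ admits at the point $\esp A$ a supporting affine functional $\ell\leq\Phi$ with $\ell(\esp A)=\Phi(\esp A)$, whence $\esp[\Phi(A)]\geq\esp[\ell(A)]=\ell(\esp A)=\Phi(\esp A)$, i.e.\ $\mathrm{Tr}(f(\esp A))\leq\esp[\mathrm{Tr}(f(A))]$. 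Here $\esp A$ is positive because $A$ is positive almost surely, so its spectrum lies in the interval on which $f$ is assumed convex.

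The core of the argument, and the step I expect to carry the real content, is the convexity of $\Phi$, which I would obtain through a variational (Peierls-type) representation of the trace. Fix a positive symmetric $A$ with spectral decomposition $A=\sum_j\lambda_j v_jv_j^\top$ and let $\{u_i\}_{i\in[d]}$ be an arbitrary orthonormal basis. Writing $\scal{u_i}{Au_i}=\sum_j\lambda_j|\scal{u_i}{v_j}|^2$ and noting that $\sum_j|\scal{u_i}{v_j}|^2=1$, the scalar convexity of $f$ applied with the probability weights $|\scal{u_i}{v_j}|^2$ gives
\[
f\big(\scal{u_i}{Au_i}\big)\leq\sum_j f(\lambda_j)|\scal{u_i}{v_j}|^2=\scal{u_i}{f(A)u_i}.
\]
Summing over $i$ and using that the trace is basis-independent yields the Peierls inequality $\sum_i f(\scal{u_i}{Au_i})\leq\mathrm{Tr}(f(A))$, with equality when $\{u_i\}$ is an eigenbasis of $A$. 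Hence
\[
\mathrm{Tr}(f(A))=\sup_{\{u_i\}\text{ ONB}}\ \sum_{i=1}^d f\big(\scal{u_i}{Au_i}\big).
\]

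Convexity of $\Phi$ then follows structurally: for each fixed orthonormal basis the map $A\mapsto\sum_i f(\scal{u_i}{Au_i})$ is a finite sum of compositions of the convex function $f$ with the linear functionals $A\mapsto\scal{u_i}{Au_i}$, hence convex in $A$; and a pointwise supremum of a family of convex functions is convex. Therefore $\Phi=\mathrm{Tr}(f(\cdot))$ is convex on $\mathcal{S}_d$, which closes the argument. The main obstacle is precisely this convexity step: it is essential that we take the trace, since for merely convex (as opposed to operator-convex) $f$ the matrix inequality $f(\esp A)\preceq\esp f(A)$ generally fails, and the Peierls representation is exactly what converts scalar convexity of $f$ into convexity of the scalar-valued functional $\Phi$. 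Minor care is also needed regarding integrability, so that both $\esp A$ and $\esp[\mathrm{Tr}(f(A))]$ are well defined, and regarding the fact that the spectra of $A$ and of $\esp A$ remain in the domain of convexity of $f$; both hold here since $A$ is positive.
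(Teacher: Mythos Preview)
The paper does not give its own proof of this theorem; it is stated with a citation to \cite{carlen2010trace} and used as a black box. Your argument is correct and is essentially the standard proof one finds in that reference: establish the Peierls variational formula $\mathrm{Tr}(f(A))=\sup_{\{u_i\}\ \text{ONB}}\sum_i f(\scal{u_i}{Au_i})$, deduce that $A\mapsto\mathrm{Tr}(f(A))$ is convex as a supremum of convex functions, and then apply the scalar Jensen inequality on the finite-dimensional space of symmetric matrices.
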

\begin{propo}\label{prop:exchangeabilityResults}
    Let $A= \sum_{j=1}^d Z_jZ_j^\top$ with $Z_1,\dots,Z_d$ i.i.d. random vector of $\R^p$ with $\Vert Z_1\Vert_2^2\leq 1$ almost surely and $\esp ZZ^\top=\alpha I_p$, then, for all $\lambda >0$
    \begin{equation}
        \frac{p}{d\alpha+\lambda}\leq \esp\mathrm{Tr}\left(( A +\lambda I_p)^{-1}\right)\leq (1+1/\lambda)\frac{p}{d\alpha+\lambda}.
    \end{equation}
\end{propo}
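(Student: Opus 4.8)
The plan is to treat the two bounds separately: the lower one comes for free from the trace--Jensen inequality, while the upper one needs a leave-one-out computation based on the Sherman--Morrison formula and the exchangeability of $Z_1,\dots,Z_d$.

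For the lower bound I would apply \Cref{thm:JensenTrace} to the convex map $f(x)=1/(x+\lambda)$ on $[0,\infty)$ and to the random positive semi-definite matrix $A$. Since $\esp A=\sum_{j\in[d]}\esp[Z_jZ_j^\top]=d\alpha I_p$, this gives $\frac{p}{d\alpha+\lambda}=\mathrm{Tr}(f(\esp A))\le\esp\,\mathrm{Tr}(f(A))=\esp\,\mathrm{Tr}((A+\lambda I_p)^{-1})$, which is exactly the claimed left inequality.

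For the upper bound, write $T:=\esp\,\mathrm{Tr}((A+\lambda I_p)^{-1})$. The first step uses the identity $(A+\lambda)^{-1}A=I_p-\lambda(A+\lambda)^{-1}$ together with $Z_j^\top(A+\lambda)^{-1}Z_j=\mathrm{Tr}((A+\lambda)^{-1}Z_jZ_j^\top)$: summing over $j\in[d]$ and taking expectations, and noting that $(Z_j,A)$ has the same law as $(Z_1,A)$ for every $j$ because $A$ is a symmetric function of i.i.d.\ vectors, one obtains $d\,\esp[Z_1^\top(A+\lambda)^{-1}Z_1]=p-\lambda T$. The second step lower bounds $\esp[Z_1^\top(A+\lambda)^{-1}Z_1]$ by a multiple of $T$. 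Set $A_{-1}:=A-Z_1Z_1^\top$ and $u:=Z_1^\top(A_{-1}+\lambda)^{-1}Z_1$; Sherman--Morrison yields $Z_1^\top(A+\lambda)^{-1}Z_1=u/(1+u)$. Since $A_{-1}\succeq0$ forces $(A_{-1}+\lambda)^{-1}\preceq\tfrac1\lambda I_p$, we have $0\le u\le\Vert Z_1\Vert_2^2/\lambda\le1/\lambda$, and on this interval the elementary estimate $u/(1+u)\ge\tfrac{\lambda}{\lambda+1}u$ holds (the concave function $u\mapsto u/(1+u)$ lies above its chord joining $0$ and $1/\lambda$). Using independence of $Z_1$ and $A_{-1}$, $\esp[u]=\alpha\,\esp\,\mathrm{Tr}((A_{-1}+\lambda)^{-1})\ge\alpha T$, the inequality because $A\succeq A_{-1}$. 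Plugging these back, $p-\lambda T\ge\tfrac{d\alpha\lambda}{\lambda+1}T$, which rearranges into $T\le\frac{(\lambda+1)p}{\lambda(\lambda+1+d\alpha)}\le\frac{(\lambda+1)p}{\lambda(\lambda+d\alpha)}=(1+1/\lambda)\frac{p}{d\alpha+\lambda}$.

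I expect the only genuine obstacle to be the second step of the upper bound, namely producing a \emph{lower} bound on $\esp[Z_1^\top(A+\lambda)^{-1}Z_1]$ proportional to $T$: this is precisely where the almost-sure boundedness $\Vert Z_1\Vert_2^2\le1$ is used, via $u\le1/\lambda$ and the ensuing inequality $u/(1+u)\ge\tfrac{\lambda}{\lambda+1}u$; without such a bound one could not convert $u/(1+u)$ into something linear in $u$. Everything else — the trace identity, Sherman--Morrison, the $A\succeq A_{-1}$ monotonicity, and the final algebraic rearrangement — is routine.
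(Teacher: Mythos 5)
Your proof is correct, but it takes a different route from the paper: the paper does not prove this proposition from scratch at all, it simply invokes Lemma~2 of \citet{mourtada2022elementary} applied to $\hat\Sigma=\tfrac{1}{d}A$, whereas you give a complete, self-contained derivation. Your lower bound via the trace--Jensen inequality (\Cref{thm:JensenTrace}) with $\esp A=d\alpha I_p$ is exactly right, and your upper bound is sound at every step: the identity $d\,\esp[Z_1^\top(A+\lambda)^{-1}Z_1]=p-\lambda T$ from exchangeability, the Sherman--Morrison reduction to $u/(1+u)$ with $u=Z_1^\top(A_{-1}+\lambda)^{-1}Z_1$, the use of $\Vert Z_1\Vert_2^2\leq 1$ to get $u\leq 1/\lambda$ and hence $u/(1+u)\geq\tfrac{\lambda}{\lambda+1}u$, the independence of $Z_1$ and $A_{-1}$ giving $\esp[u]=\alpha\,\esp\mathrm{Tr}((A_{-1}+\lambda)^{-1})\geq\alpha T$, and the final rearrangement (which in fact yields the slightly sharper intermediate bound $T\leq\frac{(\lambda+1)p}{\lambda(\lambda+1+d\alpha)}$ before relaxing to the stated one). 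This leave-one-out argument is essentially the same mechanism that underlies the cited lemma, so in spirit the two proofs agree; what your version buys is that the appendix would become self-contained (and makes explicit where the almost-sure bound $\Vert Z_1\Vert_2^2\leq 1$ enters), while the paper's citation buys brevity and defers the bookkeeping to an external reference.
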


\begin{proof}[Proof of \Cref{prop:exchangeabilityResults}]
This result is a direct application of \citet[][Lemma 2]{mourtada2022elementary} considering $\hat\Sigma=\frac{1}{d}A$
\end{proof}

\begin{lemma}\label{lem:vector_random_matrice}
Let $M\in\R^{p\times p}$ be a random symmetric matrix, such that for all vectors  $u,v\in \mathbb{S}^{p-1}$, 
$\mathrm{Law}(u^\top M u)=\mathrm{Law}(v^\top M v)$. Then, for all $\beta\in\R^p$,
\begin{equation*}
    \esp \left[ \beta^\top M\beta \right] = \Vert\beta\Vert_2^2\frac{\esp \mathrm{Tr}(M)}{p}.
\end{equation*}
This is in particular satisfied if, for any orthogonal matrix $O$, $OMO^\top$ has the same law as $M$. 
\end{lemma}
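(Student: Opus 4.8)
The plan is to reduce everything to unit vectors and then to exploit linearity of the trace. First I would dispose of the trivial case $\beta = 0$, and otherwise write $\beta = \|\beta\|_2\, u$ with $u = \beta/\|\beta\|_2 \in \mathbb{S}^{p-1}$, so that $\beta^\top M \beta = \|\beta\|_2^2\, u^\top M u$ by bilinearity. It therefore suffices to show that $c := \esp[u^\top M u]$ does not depend on the choice of $u \in \mathbb{S}^{p-1}$ and equals $\esp\mathrm{Tr}(M)/p$. The first point is immediate from the hypothesis: since $\mathrm{Law}(u^\top M u) = \mathrm{Law}(v^\top M v)$ for all unit $u,v$, these random variables have the same expectation (whenever it exists), so $c$ is a well-defined constant.

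Next I would identify $c$. Fix any orthonormal basis $e_1,\dots,e_p$ of $\R^p$. Since $M$ is symmetric, $\mathrm{Tr}(M) = \sum_{j=1}^p e_j^\top M e_j$ pointwise, hence, taking expectations and using that each $e_j$ is a unit vector,
\[
\esp\mathrm{Tr}(M) \;=\; \sum_{j=1}^p \esp\!\left[e_j^\top M e_j\right] \;=\; \sum_{j=1}^p c \;=\; p\,c.
\]
Combining with the reduction above gives $\esp[\beta^\top M \beta] = \|\beta\|_2^2\, c = \|\beta\|_2^2\,\esp\mathrm{Tr}(M)/p$, which is the claim.

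Finally, for the ``in particular'' clause, I would invoke the transitivity of the orthogonal group on the sphere: given $u,v \in \mathbb{S}^{p-1}$ there is an orthogonal $O$ with $Ou = v$. Then $v^\top M v = (Ou)^\top M (Ou) = u^\top (O^\top M O)\, u$, and since $O^\top$ is orthogonal the assumption gives $O^\top M O \overset{d}{=} M$, whence $u^\top(O^\top M O)u \overset{d}{=} u^\top M u$. Thus $\mathrm{Law}(v^\top M v) = \mathrm{Law}(u^\top M u)$, so the rotational-invariance condition indeed implies the hypothesis of the lemma.

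There is no serious obstacle here; the only mild point of care is integrability — one should note the stated identity is an equality of expectations that presumes $\esp\mathrm{Tr}(M)$ (equivalently $\esp[u^\top M u]$) is finite, which is harmless in all our applications where $M$ is bounded — and the elementary fact that the orthogonal group acts transitively on $\mathbb{S}^{p-1}$, which is what makes the ``in particular'' implication go through.
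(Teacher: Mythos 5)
Your proof is correct and follows essentially the same route as the paper: both arguments use the hypothesis to deduce that $\esp[u^\top M u]$ is a constant $\alpha$ over $u\in\mathbb{S}^{p-1}$ and then identify $\alpha$ with $\esp\mathrm{Tr}(M)/p$, the only cosmetic difference being that you sum $e_j^\top M e_j$ over an orthonormal basis directly, whereas the paper first concludes $\esp M=\alpha I_p$ from the constancy of the quadratic form. Your explicit transitivity argument for the ``in particular'' clause (choosing $O$ with $Ou=v$ and using $O^\top M O\overset{d}{=}M$) is exactly the content the paper delegates to a citation, so nothing is missing.
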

\begin{proof}
By assumption, for all $u,v \in \mathbb{S}^{d-1}$, $\esp u^\top M u = \esp v^\top M v$. Thus, there exists $\alpha$ such that, for all $v\in \mathbb{S}^d$, $v^\top \esp M v=\esp v^\top M v = \alpha$, which entails that $\esp M=\alpha I$ by characterization of symmetric matrices. Therefore, $\esp \mathrm{Tr}(M)= \mathrm{Tr}(\esp M)=p\alpha$, and $\esp M=\frac{\esp \mathrm{Tr}(M)}{p}I$. Hence, for all $\beta\in\mathbb{R}^p$
\begin{equation*}
    \esp \left[ \beta^\top M\beta \right] =   \beta^\top \esp M\beta =  \Vert\beta\Vert_2^2\frac{\esp \mathrm{Tr}(M)}{p}.
\end{equation*}
The last point easily follows, see for example \citet[][Proposition 2.14]{page1984multivariate}  for the case of invariant distributions by orthogonal transforms. 

\end{proof}

The following result is inspired by the result of \citet{cook2011mean}, that is an adaptation of that of \citet{von1988moments}. 
\begin{lemma}\label{lem:RMfrobenius}
For all $0<d<p-1$, let $\mathbf{W}\in\R^{p\times d}$ such that columns of $\mathbf{W}$ are i.i.d. and uniform over $\mathbb{S}^{p-1}$, 
then 
\begin{equation*}
    \esp \Vert\mathbf{W}^\dag\Vert_{\rm Fr}^2=d\left(1+\frac{d-1}{p-d-1}\right). 
\end{equation*}
\end{lemma}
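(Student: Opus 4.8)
The plan is to reduce $\esp\,\Vert\mathbf{W}^{\dagger}\Vert_{\rm Fr}^{2}$ to the expected trace of an inverse Gram matrix, and then to transfer that computation to a Gaussian model where the needed moments are classical. First I would note that since $d<p$ the matrix $\mathbf{W}$ has full column rank $d$ almost surely, so $\mathbf{W}^{\dagger}=(\mathbf{W}^{\top}\mathbf{W})^{-1}\mathbf{W}^{\top}$ and therefore
\[
\Vert\mathbf{W}^{\dagger}\Vert_{\rm Fr}^{2}=\mathrm{Tr}\!\left(\mathbf{W}^{\dagger}(\mathbf{W}^{\dagger})^{\top}\right)=\mathrm{Tr}\!\left((\mathbf{W}^{\top}\mathbf{W})^{-1}\right),
\]
so it suffices to evaluate $\esp\,\mathrm{Tr}((\mathbf{W}^{\top}\mathbf{W})^{-1})$.

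Next I would introduce a coupling with a standard Gaussian matrix $G\in\R^{p\times d}$ with i.i.d.\ $\mathcal{N}(0,1)$ entries and columns $G_{1},\dots,G_{d}$. Decomposing each column as $G_{j}=\Vert G_{j}\Vert_{2}\,\widetilde W_{j}$, the direction $\widetilde W_{j}:=G_{j}/\Vert G_{j}\Vert_{2}$ is uniform on $\mathbb{S}^{p-1}$ and independent of the norm $\Vert G_{j}\Vert_{2}$, and the pairs $(\widetilde W_{j},\Vert G_{j}\Vert_{2})_{j\in[d]}$ are mutually independent; hence $G$ has the law of $\mathbf{W}D$ with $D=\mathrm{diag}(\Vert G_{1}\Vert_{2},\dots,\Vert G_{d}\Vert_{2})$ independent of $\mathbf{W}$. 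Then $(G^{\top}G)^{-1}=D^{-1}(\mathbf{W}^{\top}\mathbf{W})^{-1}D^{-1}$, whence
\[
\mathrm{Tr}\!\left((G^{\top}G)^{-1}\right)=\sum_{j=1}^{d}\Vert G_{j}\Vert_{2}^{-2}\,\big[(\mathbf{W}^{\top}\mathbf{W})^{-1}\big]_{jj}.
\]
Taking expectations, using the independence of $D$ and $\mathbf{W}$ and the exchangeability of the columns of $\mathbf{W}$ (so that $\esp[(\mathbf{W}^{\top}\mathbf{W})^{-1}]_{jj}$ does not depend on $j$), and noting that all terms are nonnegative so the interchange is licit, I obtain
\[
\esp\,\mathrm{Tr}\!\left((G^{\top}G)^{-1}\right)=\esp\big[\Vert G_{1}\Vert_{2}^{-2}\big]\;\esp\,\mathrm{Tr}\!\left((\mathbf{W}^{\top}\mathbf{W})^{-1}\right).
\]

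Finally I would plug in two classical Gaussian moments. The matrix $G^{\top}G$ is Wishart $\mathcal{W}_{d}(I_{d},p)$, so (valid precisely because $d<p-1$) $\esp[(G^{\top}G)^{-1}]=(p-d-1)^{-1}I_{d}$ and hence $\esp\,\mathrm{Tr}((G^{\top}G)^{-1})=d/(p-d-1)$; and $\Vert G_{1}\Vert_{2}^{2}\sim\chi^{2}_{p}$ gives $\esp[\Vert G_{1}\Vert_{2}^{-2}]=1/(p-2)$ (finite since $p\geq 3$). Dividing,
\[
\esp\,\Vert\mathbf{W}^{\dagger}\Vert_{\rm Fr}^{2}=\esp\,\mathrm{Tr}\!\left((\mathbf{W}^{\top}\mathbf{W})^{-1}\right)=\frac{d(p-2)}{p-d-1}=d\left(1+\frac{d-1}{p-d-1}\right).
\]
The main obstacle is setting up the Gaussian coupling rigorously — in particular verifying the joint independence of the column directions and the column norms, which is exactly what lets the scalar weights $\Vert G_{j}\Vert_{2}^{-2}$ be pulled out of the expectation; once that is in place, the rest reduces to the standard inverse-Wishart and inverse-$\chi^{2}$ mean formulas.
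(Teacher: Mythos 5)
Your proof is correct, and it takes a genuinely different route from the paper's. Both arguments start from the same polar factorization $W_j=G_j/\Vert G_j\Vert_2$ with $G_j\sim\mathcal{N}(0,I_p)$, but after that they diverge. The paper works directly with the pseudoinverse: it identifies the $j$-th column of $\mathbf{W}^{\dagger}$ (there, of $\mathbf{M}^\dagger$ with $\mathbf{M}=\mathbf{W}^\top$) as $P_j l_j/\Vert P_j l_j\Vert_2^2$, where $P_j$ projects onto the orthogonal complement of the other rows, so that $\Vert\mathbf{W}^{\dagger}\Vert_{\rm Fr}^2=\sum_j \Vert P_j l_j\Vert_2^{-2}$, and then evaluates each term by conditioning on the other columns and applying Cochran's theorem together with the inverse-$\chi^2$ mean of parameter $p-d+1$. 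You instead reduce to $\mathrm{Tr}\bigl((\mathbf{W}^\top\mathbf{W})^{-1}\bigr)$, exploit the multiplicative coupling $G=\mathbf{W}D$ with $D=\mathrm{diag}(\Vert G_1\Vert_2,\dots,\Vert G_d\Vert_2)$ independent of $\mathbf{W}$, and quote the classical inverse-Wishart mean $\esp[(G^\top G)^{-1}]=(p-d-1)^{-1}I_d$ (valid exactly when $d<p-1$) together with $\esp[\Vert G_1\Vert_2^{-2}]=1/(p-2)$; the key independence and nonnegativity points needed to factor the expectation are correctly flagged. Your version is shorter and cleanly modular, at the price of outsourcing the hard step to the inverse-Wishart formula, whose standard derivation is essentially the same projection-plus-Cochran computation the paper carries out explicitly; the paper's version is self-contained and makes the geometric mechanism visible. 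Both yield $d(p-2)/(p-d-1)=d\bigl(1+\tfrac{d-1}{p-d-1}\bigr)$, and both use the hypothesis $d<p-1$ exactly where integrability requires it.
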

\begin{proof}

Up to a polar coordinate change of variable, one can show that the distribution of the columns of $\mathbf{W}$ corresponds to that of normalized Gaussian vectors, i.e., for all $j \in [d]$, 
    \begin{equation*}
        W_j= \frac{G_j}{\Vert G_j\Vert_2},
    \end{equation*}
where $(G_j)$ are i.i.d. of law $\mathcal{N}(0,I_p)$. 
Note that the columns of $ \mathbf{W}$ are the rows of $\mathbf{M} = \mathbf{W}^\top$. As $d<p$, 
\begin{equation*}
    \mathbf{M}\mathbf{M}^\dag = I_d. 
\end{equation*}
because the rows of $\mathbf{M}$ are almost surely linearly independent. For all $j \in [d]$, we let $l_j$ be the $j$-th row
of $\mathbf{M}$, and $c_j$ the $j$-th column of $\mathbf{M}^\dag$. Therefore, for all $k\neq j$, $l_k^\top c_j=0$, then $c_j\in \mathrm{Span}(l_k, k\neq j)^\perp$. 
Note, that $\mathrm{Span}(l_j,j\in[d])= \mathrm{Span}(c_j,j\in[d])$ by property of Moore-Penrose pseudoinverse (\Cref{thm:svd}). Thus, $c_j$ as the form, 
\begin{equation*}
    c_j= \theta_j P_j l_j, 
\end{equation*}
where $P_j$ is the orthogonal projection on $\mathrm{Span}(l_k, k\neq j)^\perp$. Besides, $l_j^\top c_j=1$ gives us that 
\begin{equation*}
    \theta_j = \frac{1}{\Vert P_j l_j\Vert_2^2}. 
\end{equation*}
Thus, 
\begin{equation}\label{eq:Mfrob}
    \Vert M^\dag\Vert_{\rm Fr}^2= \sum_j \Vert c_j\Vert_2^2= \sum_j \frac{1}{\Vert P_j l_j\Vert_2^2}
\end{equation}
As $l_j= W_j= \frac{G_j}{\Vert G_j\Vert_2}$, we can write 
\begin{equation*}
    \frac{1}{\Vert P_j l_j\Vert_2^2}=\frac{\Vert G_j\Vert_2^2}{\Vert P_j G_j\Vert_2^2}
\end{equation*}
Using that $\Vert G_j\Vert_2^2= \Vert P_jG_j\Vert_2^2+\Vert (I_p-P_j)G_j\Vert_2^2$, we have 
\begin{equation*}
    \frac{1}{\Vert P_j l_j\Vert_2^2}=1+\frac{\Vert (I_p-P_j)G_j\Vert_2^2}{\Vert P_j G_j\Vert_2^2}.
\end{equation*}   
Conditioning by $(G_k)$ with $k\neq j$, and using Cochran theorem $(I_p-P_j)G_j|G_k,k\neq j$ and $P_j G_j|G_k,k\neq j$ are two independent standard normal vector of respective dimensions $p- (p-d+1)=d-1$ and $p-d+1$. Thus, 
\begin{align}
    \esp\left[ \frac{1}{\Vert P_j l_j\Vert_2^2}|G_k,k\neq j \right] &= 1+ \esp\left[ \Vert (I_p-P_j)G_j\Vert_2^2|G_k,k\neq j \right]\esp\left[ \frac{1}{\Vert P_jG_j\Vert_2^2}|G_k,k\neq j \right]\\
    &= 1+ \frac{d-1}{p-d-1},
\end{align}
because $\esp\left[ \Vert (I_p-P_j)G_j\Vert_2^2|G_k,k\neq j \right]= d-1$ and $\esp\left[ \frac{1}{\Vert P_jG_j\Vert_2^2}|G_k,k\neq j \right]= \frac{1}{p-d-1}$ as the expectation of an inverse-chi-squared of parameter $p-d+1 > 2$  (with $d<p-1$).
Then, taking the expectation of \eqref{eq:Mfrob} leads to the result,
\begin{equation*}
    \esp \Vert M^\dag\Vert_{\rm Fr}^2=d+\frac{d(d-1)}{p-d-1}.
\end{equation*}
\end{proof}

\begin{lemma}\label{lem:hadamard_monotonicity}
   Let $A,B,V$ three symetrics non-negative matrix, if $A\preceq B$
then $A\odot V\preceq B\odot V$. 
\end{lemma}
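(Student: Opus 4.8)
The plan is to reduce the statement to the fundamental fact that the Hadamard product of two positive semi-definite matrices is positive semi-definite (the Schur product theorem). First I would observe that since $A \preceq B$, the matrix $C := B - A$ is symmetric non-negative, so the claim $A \odot V \preceq B \odot V$ is equivalent to $C \odot V \succeq 0$, using the bilinearity of the Hadamard product: $(B-A)\odot V = B\odot V - A\odot V$. Thus it suffices to prove that the Hadamard product of two symmetric positive semi-definite matrices is positive semi-definite.

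For that core fact, I would use the rank-one decomposition argument. Write $C = \sum_{k} c_k c_k^\top$ and $V = \sum_{\ell} v_\ell v_\ell^\top$ as sums of rank-one positive semi-definite matrices (spectral decomposition, with the eigenvalues absorbed into the vectors). Then by bilinearity $C \odot V = \sum_{k,\ell} (c_k c_k^\top) \odot (v_\ell v_\ell^\top)$, and the key elementary identity is that $(c_k c_k^\top) \odot (v_\ell v_\ell^\top) = (c_k \odot v_\ell)(c_k \odot v_\ell)^\top$, since the $(i,j)$ entry of the left-hand side is $c_{k,i} c_{k,j} v_{\ell,i} v_{\ell,j} = (c_{k,i} v_{\ell,i})(c_{k,j} v_{\ell,j})$. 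Each such term is manifestly positive semi-definite, and a sum of positive semi-definite matrices is positive semi-definite, so $C \odot V \succeq 0$.

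There is essentially no obstacle here: the only thing to be slightly careful about is symmetry (so that spectral decompositions with real eigenvectors exist and the Loewner order is meaningful), which is granted by hypothesis, and the bookkeeping that $A \odot V$ and $B \odot V$ are themselves symmetric so that comparing them in the Loewner order makes sense — this is immediate since the Hadamard product of symmetric matrices is symmetric. I would present the argument in three short steps: (1) reduce to $C \odot V \succeq 0$ via bilinearity; (2) decompose $C$ and $V$ into rank-one summands; (3) apply the rank-one identity and conclude by summation. The whole proof is a couple of lines once the Schur product theorem is invoked, and if the paper is willing to cite it directly, the lemma follows in one step from $B - A \succeq 0$, $V \succeq 0$, and the Schur product theorem.
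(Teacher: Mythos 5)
Your proof is correct, but it follows a genuinely different route from the paper's. You reduce the claim, via bilinearity of $\odot$, to showing $(B-A)\odot V\succeq 0$, and then prove (or cite) the Schur product theorem through the rank-one identity $(cc^\top)\odot(vv^\top)=(c\odot v)(c\odot v)^\top$ applied to spectral decompositions of $B-A$ and $V$. The paper instead argues probabilistically: it takes $X\sim\mathcal{N}(0,V)$, uses $V=\esp\left[XX^\top\right]$ and the entrywise computation $\theta_i X_i X_j A_{ij}\theta_j$ to write $\theta^\top(A\odot V)\theta=\esp\left[\Vert X\odot\theta\Vert_A^2\right]$, and then applies the pointwise inequality $\Vert X\odot\theta\Vert_A^2\le\Vert X\odot\theta\Vert_B^2$ before taking expectations. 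These are essentially the two classical proofs of Schur-product positivity: yours is purely algebraic and self-contained (and yields the Schur product theorem explicitly), while the paper's covariance-representation trick avoids any spectral decomposition and matches the probabilistic toolkit used elsewhere in the appendix; it also only needs $V$ to be a valid covariance matrix, exactly as your argument only needs $V\succeq 0$. Either proof is complete and acceptable.
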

 \begin{proof}
     Let $X\sim\mathcal{N}(0,V)$ and $\theta\in\R^d$,

\begin{align*}
\left\Vert \theta\right\Vert _{A\odot V}^{2} & =\theta^{\top}A\odot V\theta\\
 & =\theta^{\top}\left(\left(\esp XX^{\top}\right)\odot A\right)\theta\\
 & =\esp\left[\theta^{\top}\left(\left(XX^{\top}\right)\odot A\right)\theta\right]\\
 & =\esp\left[\sum_{i,j}\theta_{i}\left(\left(XX^{\top}\right)\odot A\right)_{i,j}\theta_{j}\right]\\
 & =\esp\left[\sum_{i,j}\theta_{i}X_{i}X_{j}A_{i,j}\theta_{j}\right]\\
 & =\esp\left[\sum_{i,j}\left(\theta_{i}X_{i}\right)\left(\theta_{j}X_{j}\right)A_{i,j}\right]\\
 & =\esp\left[\left\Vert X\odot\theta\right\Vert _{A}^{2}\right]\\
 & \leq\esp\left[\left\Vert X\odot\theta\right\Vert _{B}^{2}\right]\\
 & =\left\Vert \theta\right\Vert _{B\odot V}^{2}
\end{align*}
 \end{proof}

\section{Proof of \Cref{sec:DD}}\label{app:RFfinite}

The following result, established by \citet{ayme2023naive}, is used to derive an expression of  $\Delta_{\rm missing}+\Delta_{\rm imp/miss}$.
\begin{lemma}[Proposition 3.1 of \citep{ayme2023naive}]
\label{lem_previous_work1}
    For all $\theta\in\R^d$, 
    \begin{equation}\label{eq:BimpDecompoBV}
    R_{\rm imp}(\theta)= R(\rho\theta)+\rho(1-\rho)\Vert\theta\Vert_{{\rm diag}(\Sigma)}^2.    \end{equation}
\end{lemma}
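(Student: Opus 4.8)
## Proof Plan for Lemma \ref{lem_previous_work1}

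The plan is to compute $R_{\rm imp}(\theta) = \esp[(Y - \tilde X^\top\theta)^2 \mid \mathbf{W}]$ directly by expanding the square and exploiting the independence between the mask $P$ and the complete data $(X,Y)$ granted by \Cref{ass:bernoulli_model}, together with the product structure $\tilde X = P\odot X$. First I would write $\tilde X^\top\theta = \sum_j P_j X_j \theta_j$ and expand $R_{\rm imp}(\theta) = \esp[Y^2\mid\mathbf{W}] - 2\esp[Y\,\tilde X^\top\theta\mid\mathbf{W}] + \esp[(\tilde X^\top\theta)^2\mid\mathbf{W}]$. Conditioning on $(X,Y,\mathbf{W})$ and using $\esp[P_j]=\rho$ with $P\perp\!\!\!\perp(X,Y)$, the cross term becomes $\esp[Y\,\tilde X^\top\theta\mid\mathbf{W}] = \rho\,\esp[Y X^\top\theta\mid\mathbf{W}] = \esp[Y X^\top(\rho\theta)\mid\mathbf{W}]$.

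The main work is the quadratic term $\esp[(\tilde X^\top\theta)^2\mid\mathbf{W}] = \sum_{i,j}\theta_i\theta_j\,\esp[P_iP_j X_iX_j\mid\mathbf{W}]$. Here I would split the sum into the diagonal $i=j$ and the off-diagonal $i\neq j$ parts. Since the $P_j$ are independent Bernoulli$(\rho)$ and independent of $X$, one has $\esp[P_iP_j] = \rho^2$ for $i\neq j$ and $\esp[P_j^2] = \esp[P_j] = \rho$ for $i=j$. Thus the off-diagonal part equals $\rho^2\sum_{i\neq j}\theta_i\theta_j\esp[X_iX_j\mid\mathbf{W}]$ and the diagonal part equals $\rho\sum_j\theta_j^2\esp[X_j^2\mid\mathbf{W}]$. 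Writing $\rho = \rho^2 + \rho(1-\rho)$ on the diagonal term, I can recombine: $\rho^2\sum_{i,j}\theta_i\theta_j\esp[X_iX_j\mid\mathbf{W}] + \rho(1-\rho)\sum_j\theta_j^2\esp[X_j^2\mid\mathbf{W}]$, which is exactly $(\rho\theta)^\top\Sigma(\rho\theta) + \rho(1-\rho)\|\theta\|_{{\rm diag}(\Sigma)}^2$, recalling $\Sigma = \esp[XX^\top\mid\mathbf{W}]$ and ${\rm diag}(\Sigma)_{jj} = \esp[X_j^2\mid\mathbf{W}]$.

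Finally I would reassemble the three terms. The pieces $\esp[Y^2\mid\mathbf{W}] - 2\esp[Y X^\top(\rho\theta)\mid\mathbf{W}] + (\rho\theta)^\top\Sigma(\rho\theta)$ are precisely $\esp[(Y - X^\top(\rho\theta))^2\mid\mathbf{W}] = R(\rho\theta)$, and the leftover $\rho(1-\rho)\|\theta\|_{{\rm diag}(\Sigma)}^2$ gives the second summand, yielding \eqref{eq:BimpDecompoBV}. There is no real obstacle here — the only point requiring a little care is the bookkeeping of the $i=j$ versus $i\neq j$ terms and the algebraic identity $\rho = \rho^2 + \rho(1-\rho)$ that lets the diagonal contribution be split between the "complete risk at $\rho\theta$" part and the extra diagonal penalty; everything else is a routine second-moment computation using only the MCAR independence $(X,Y)\perp\!\!\!\perp P$ and the independence of the coordinates of $P$.
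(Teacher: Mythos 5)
Your computation is correct: expanding $\esp[(Y-\tilde X^\top\theta)^2\mid\mathbf{W}]$, using $P\perp\!\!\!\perp(X,Y)$ with $\esp[P_iP_j]=\rho^2$ for $i\neq j$ and $\esp[P_j^2]=\rho$, and splitting the diagonal via $\rho=\rho^2+\rho(1-\rho)$ yields exactly $R(\rho\theta)+\rho(1-\rho)\Vert\theta\Vert_{{\rm diag}(\Sigma)}^2$. Note that the paper does not reprove this lemma (it imports it as Proposition 3.1 of \citet{ayme2023naive}), and your direct second-moment derivation is the standard argument behind that cited result, so there is nothing to add.
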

Recalling that 
\begin{align}
 \Delta_{\rm miss}+\Delta_{\rm imp/miss} =    \esp \left[ R_{\rm imp}^\star(d)-R^\star(d)\right], 
\end{align}
we deduce from \Cref{lem_previous_work1} that 
\begin{equation}\label{eq:BiasRidgeForm}
    \Delta_{\rm miss}+\Delta_{\rm imp/miss}=\esp \inf_{\theta\in\R^d}\left\{R(\theta)-R^\star(d)+\frac{1-\rho}{\rho}\Vert\theta\Vert_{{\rm diag}(\Sigma)}^2\right\}.
\end{equation}
Additionally, when ${\rm diag}(\Sigma)=I_p$ (in particular for model \eqref{eq:modelRFgaussien}), by optimization, we obtain, 
\begin{equation}\label{eq:BiasRidgeFormClose}
    \Delta_{\rm miss}+\Delta_{\rm imp/miss}=\lambda\esp \Vert\theta^\star\Vert_{\Sigma(\lambda I+\Sigma)^{-1}}^2,
\end{equation}
with $\lambda=\frac{1-\rho}{\rho}$.

\begin{lemma}\label{lem:decompo_pat_esperance}
Under \Cref{ass:bernoulli_model},
\begin{equation*}
    \esp R_{\rm miss}^\star(d)=\sum_{k=0}^d \P(B=k)\esp R^\star(k),
\end{equation*}
where $B$ is a binomial random variable of parameters $d$ and $\rho$. 
\end{lemma}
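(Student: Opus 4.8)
The statement to prove is \Cref{lem:decompo_pat_esperance}: under the MCAR assumption, $\esp R_{\rm miss}^\star(d)=\sum_{k=0}^d \P(B=k)\,\esp R^\star(k)$ with $B\sim\mathcal{B}(d,\rho)$. The key structural fact is that, because the pattern $P$ is independent of $(X,Y)$ and of the weights $(W_j)_j$, conditioning on $P$ turns the problem of predicting $Y$ from $(\tilde X,P)$ into the problem of predicting $Y$ from the subvector of fully observed coordinates $X_{\mathrm{obs}(P)}$, which --- by the random-feature structure and the exchangeability of the weights --- is distributionally the same as a complete random-feature input of dimension $|P|:=\sum_j P_j$. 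The plan is to make this reduction precise and then integrate over $P$.

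First I would recall from \citet{le2020linear} (cited in the excerpt) that $R_{\rm miss}^\star(d)$ is attained by a pattern-by-pattern predictor, so that conditionally on $W_1,\dots,W_d$,
\[
R_{\rm miss}^\star(d)=\esp_P\Big[\,\inf_f \esp\big[(Y-f(X_{\mathrm{obs}(P)}))^2\mid P, W_1,\dots,W_d\big]\,\Big],
\]
where the inner infimum is over measurable functions of the observed coordinates only (zero-imputation carries no information beyond $P$ and $X_{\mathrm{obs}(P)}$, and the output $Y$ is always observed). Then I would fix a pattern $P=\pi$ with $|\pi|=k$; under \Cref{ass:bernoulli_model} the inner conditional law of $(Y,X_{\mathrm{obs}(\pi)})$ does not depend on $\pi$ beyond its cardinality $k$, because $(X,Y)\perp\!\!\!\perp P$. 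Moreover, by \Cref{eq:modelRFgaussien,eq:modelYgaussien}, the pair $(Y,X_{\mathrm{obs}(\pi)})$ built from the weights $(W_j)_{j\in\pi}$ has exactly the same joint law --- after also averaging over $(W_j)_j$, which are i.i.d. --- as $(Y,X')$ in a complete random-feature model with input dimension $k$. Hence $\esp_W\big[\inf_f \esp[(Y-f(X_{\mathrm{obs}(\pi)}))^2\mid W]\big]=\esp R^\star(k)$ for every $\pi$ with $|\pi|=k$.

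Finally I would integrate over $P$: using $(X,Y,W)\perp\!\!\!\perp P$ to swap the $\esp_P$ and $\esp_W$ expectations (Fubini), and using that $|P|\sim\mathcal{B}(d,\rho)$ under \Cref{ass:bernoulli_model},
\[
\esp R_{\rm miss}^\star(d)=\esp_P\,\esp_W\Big[\inf_f \esp[(Y-f(X_{\mathrm{obs}(P)}))^2\mid P,W]\Big]=\esp_P\big[\esp R^\star(|P|)\big]=\sum_{k=0}^d\P(B=k)\,\esp R^\star(k).
\]
The only genuinely delicate point is the distributional identification of the observed subvector with a lower-dimensional complete random-feature model: one must check that conditioning on a fixed support $\pi$ and then averaging over the (exchangeable, i.i.d.) weights $(W_j)_{j\in\pi}$ reproduces exactly the law used to define $\esp R^\star(k)$ --- which holds because the weights are i.i.d. and the latent pair $(Z,\epsilon)$, hence $(Z^\top\beta^\star+\epsilon, (Z^\top W_j)_{j\in\pi})$, is unchanged. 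Everything else is bookkeeping with conditional expectations and Fubini, justified by the boundedness/integrability built into the Gaussian model.
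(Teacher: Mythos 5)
Your proposal is correct and follows essentially the same route as the paper: the pattern-by-pattern decomposition of $R_{\rm miss}^\star(d)$ from \citet{le2020linear}, the observation that under MCAR and i.i.d.\ weights the local risk for a pattern of size $k$ has the same law as $R^\star(k)$, and then grouping patterns by cardinality to get the binomial sum. The only cosmetic difference is that the paper also invokes the linearity of the local Bayes predictors (via Gaussianity) to match the form of $R^\star(k)$, whereas you argue directly at the level of measurable-function Bayes risks, which is equivalent here.
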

\begin{proof}
    Using the decomposition of the Bayes predictor  from \citet{le2020linear}, we have
    \begin{equation}
        R_{\rm miss}^\star(d)= \sum_{m\in\{0,1\}^d} \P(P=m)R_{m}^\star,
    \end{equation}
where 
\begin{equation*}
    R_{m}^\star=  \inf_{f} \esp \left[(Y-f(X_{{\rm obs}(m)}))^2|P=m,W_1,\dots,W_d\right],
\end{equation*}
is the local Bayes risk given $(P=m)$. Using MCAR assumption (\Cref{ass:bernoulli_model}), and Gaussian assumption, according to \citet{le2020linear}, each local Bayes predictor are linear, thus 
\begin{equation*}
    R_{m}^\star=  \inf_{\theta } \esp \left[(Y-\theta^\top X_{{\rm obs}(m)}))^2|W_j, j\in {\rm obs}(m)\right]. 
\end{equation*}
As $(W_j)$ are i.i.d. (and independent of $Y$), $R_{m}^\star$  has the same law as $R^\star(|m|)$ 
where $|m|$ is the number of observed components of $m$. Thus, 
\begin{equation}
        \esp R_{\rm miss}^\star(d)= \sum_{m\in\{0,1\}^d} \P(P=m)\esp R^\star(|m|)). 
\end{equation}
Grouping the missing patterns of the same size, we conclude that,
\begin{equation*}
    \esp R_{\rm miss}^\star(d)=\sum_{k=0}^d \P(B=k)\esp R^\star(k),
\end{equation*}
where $B$ is a binomial law of parameters $d$ and $\rho$.

\end{proof}

\subsection{Proof of Proposition \ref{prop:RiskcompletDD}}

By definition, 
\begin{align*}
R^\star(d)&= \esp\left[(X^\top\theta^\star -Y)^2 | W_1, \hdots, W_d \right]\\ 
&=\esp\left[(X^\top\theta^\star -Z^\top \beta^\star-\epsilon)^2 | W_1, \hdots, W_d \right]& \tag{using \eqref{eq:modelYgaussien}}\\
&=\sigma^2+\esp\left[(X^\top\theta^\star -Z^\top \beta^\star)^2 | W_1, \hdots, W_d \right],  
\end{align*}
using that $\epsilon$ is an independent noise of variance $\sigma^2$. 
We have $X^\top\theta^\star= Z^\top\sum_j \theta_j^\star W_j$. Then, 
\begin{align*}
R^\star(d)&=\sigma^2+\esp\left[\left(\left(\sum_{j=1}^d\theta_j^\star W_j-\beta^\star)\right)^\top Z\right)^2 \Bigg| W_1, \hdots, W_d  \right]\\
&=\sigma^2+\left\Vert\beta^\star -\sum_{j=1}^d \theta_j^\star W_j\right\Vert_2^2, 
\end{align*}
by isotropy of $Z$ ($Z\sim \mathcal{N}(0,I_p)$ and thus $\esp ZZ^\top=I$).
Using that $\sum_{j=1}^d \theta_j^\star W_j$ belongs to $ \mathrm{Span}(W_1,\dots,W_d)$, we get that  $\sum_{j=1}^d \theta_j^\star W_j=P_d\beta^\star$ where $P_d$ is the orthogonal projection on 
$ \mathrm{Span}(W_1,\dots,W_d)$. Then, 
\begin{align*}
R^\star(d)= \sigma^2+\Vert (I-P_d)\beta^\star\Vert_2^2=\sigma^2+(\beta^\star)^\top(I-P_d)\beta^\star . 
\end{align*}
Remark that $P_d$ is a random matrix (since $W_1,\dots,W_d$ are random). Denoting by $\mathbf{W}$ the matrix admitting $W_1,\dots,W_d$ as rows, the projection matrix can be rewritten as $P_d= \mathbf{W}^\dagger \mathbf{W}$. Thus, for all orthogonal matrix $O$, $OP_dO^\top=(\mathbf{W}O^\top)^\dagger \mathbf{W}O^\top$. The matrix $\mathbf{W}O^\top$ has rows $O^\top W_1,\dots,O^\top W_d$, which is an i.i.d.\ sequence of random vectors on the unit sphere (since $O^\top$ is an orthogonal). Indeed, $\mathbf{W}O^\top$ and $\mathbf{W}$ have the same distribution, in consequence $P$ and $OP_dO$ have the same distribution too. Thus, by \Cref{lem:vector_random_matrice}, if $d<p$, 
\begin{align*}
\esp R^\star(d)&= \sigma^2+\frac{1}{p}\Vert\beta^\star\Vert_2^2\esp\mathrm{Tr}(I-P_d)\\
&= \sigma^2+\frac{p-d}{p}\Vert\beta^\star\Vert_2^2,
\end{align*}
using that $\mathrm{Tr}(I-P_d)=\mathrm{rank}(I-P_d)=p-d$. Besides, if $d\geq p$, $P_d=I_p$, and $\esp R^\star(d)=\sigma^2$.

\subsection{Proof of \Cref{prop:Riskmissing}}

    Using \Cref{lem:decompo_pat_esperance}, we have
    \begin{equation*}
    \esp R_{\rm miss}^\star(d)=\sum_{k=0}^d \P(B=k)\esp R^\star(k),
\end{equation*}
where $B$ is a binomial random variable of parameters $d$ and $\rho$. Using \Cref{prop:RiskcompletDD}, we have 
 \[ 
        \esp \left[ R^\star(k) \right]=\left\{
        \begin{array}{ll}
        \sigma^2+ \frac{p-k}{p}\Vert \beta^\star\Vert_2^2, & \text{when } k< p,\\
        \sigma^2 & \text{when } k \geq p.
        \end{array}
        \right.
    \]
Combining the two previous equalities, we obtain that 
\begin{equation*}
    \esp R_{\rm miss}^\star(d)=  \sigma^2+ \frac{\esp[(p-B)\ind_{B\leq p}]}{p}\Vert \beta^\star \Vert_2^2.
\end{equation*}
In the case where $d\leq p$, $\ind_{B\leq p}=1$ almost surely,  and we obtain
\begin{equation*}
    \esp R_{\rm miss}^\star(d)=  \sigma^2+ \frac{p-\rho d}{p}\Vert \beta^\star \Vert_2^2.
\end{equation*}

\subsection{Proof of \Cref{thm:imputationDD} } 

\subsubsection{Preliminaries}

In the rest of the proof, we denote by $\mathbf{W}=(W_1,\hdots , W_d)^\top\in \mathbb{R}^{d\times p}$ the weight matrix that admits the weight vectors $W_j\sim \mathcal{U}(\mathbb{S}^{p-1})$ for rows.
We call $\Sigma = \mathbb{E}\left[ XX^\top | \mathbf{W}\right]$ the covariance matrix of an input $X\in \mathbb{R}^d$ given the weight matrix $\mathbf{W}$. Recall that the latter, resulting from a random feature model, is such that $X= \mathbf{W}Z$, for $Z\in \mathbb{R}^p$ the corresponding latent vector.

\begin{lemma}\label{lem:DeltaImpFormTrace}
    Under assumptions of \Cref{thm:imputationDD},
    \begin{equation*}
        \Delta_{\rm imp/miss}+\Delta_{\rm miss}= \begin{cases}
\frac{\lambda\left\Vert \beta^{\star}\right\Vert _{2}^{2}}{p}\esp\mathrm{Tr}\left((\Sigma+\lambda I_{d})^{-1}\right) & \text{if }d<p\\
\frac{\lambda\left\Vert \beta^{\star}\right\Vert _{2}^{2}}{p}\esp\mathrm{Tr}\left((\mathbf{W}^{\top}\mathbf{W}+\lambda I_{p})^{-1}\right) & \text{if }d\geq p,
\end{cases}
    \end{equation*}
    with $\lambda=\frac{1-\rho}{\rho}$.
\end{lemma}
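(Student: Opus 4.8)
The plan is to start from the ridge-type representation of $\Delta_{\rm miss}+\Delta_{\rm imp/miss}$ established in \eqref{eq:BiasRidgeFormClose}, namely
\[
\Delta_{\rm miss}+\Delta_{\rm imp/miss}=\lambda\,\esp\left[\left\Vert\theta^\star\right\Vert_{\Sigma(\lambda I_d+\Sigma)^{-1}}^2\right],\qquad \lambda=\frac{1-\rho}{\rho},
\]
which is valid here since ${\rm diag}(\Sigma)=I_d$ under \Cref{eq:modelRFgaussien} (each $W_j$ lies on $\mathbb{S}^{p-1}$, so $\esp[X_j^2|\mathbf W]=W_j^\top W_j=1$). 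The next step is to relate $\theta^\star$ and $\Sigma$ to the weight matrix $\mathbf W$. Since $X=\mathbf W Z$ with $Z\sim\mathcal N(0,I_p)$, we have $\Sigma=\mathbf W\mathbf W^\top$, and from the proof of \Cref{prop:RiskcompletDD} the best linear predictor on complete data satisfies $\mathbf W^\top\theta^\star=P_d\beta^\star$ where $P_d=\mathbf W^\dagger\mathbf W$ is the orthogonal projector onto ${\rm Span}(W_1,\dots,W_d)$; equivalently $\theta^\star=(\mathbf W^\dagger)^\top\beta^\star=(\mathbf W\mathbf W^\top)^\dagger\mathbf W\beta^\star$ (a.s.\ $\mathbf W$ has full rank $\min(d,p)$).

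With these substitutions the quadratic form becomes $\lambda\,\beta^{\star\top}\mathbf W^\top(\mathbf W\mathbf W^\top)^\dagger\Sigma(\lambda I_d+\Sigma)^{-1}(\mathbf W\mathbf W^\top)^\dagger\mathbf W\beta^\star$, which I would simplify by a singular-value-decomposition bookkeeping. Write $\Sigma=\mathbf W\mathbf W^\top$ in terms of the nonzero singular values of $\mathbf W$: using $\Sigma(\lambda I_d+\Sigma)^{-1}=I_d-\lambda(\lambda I_d+\Sigma)^{-1}$ restricted to the range of $\mathbf W\mathbf W^\top$, and the identity $\mathbf W^\top(\mathbf W\mathbf W^\top)^\dagger = \mathbf W^\dagger$ together with $\mathbf W^\dagger\mathbf W=P_d$, the expression collapses to $\lambda\,\beta^{\star\top} g(\mathbf W^\top\mathbf W)\,\beta^\star$ for an appropriate scalar function $g$ of the Gram matrix. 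The cleanest route is the case split: \textbf{(i)} when $d<p$, $\mathbf W\mathbf W^\top$ is an invertible $d\times d$ matrix, $P_d=\mathbf W^\dagger\mathbf W$ projects onto a $d$-dimensional subspace of $\R^p$, and one checks $\beta^{\star\top}\mathbf W^\top(\Sigma)^\dagger\Sigma(\lambda I_d+\Sigma)^{-1}(\Sigma)^\dagger\mathbf W\beta^\star=\beta^{\star\top}P_d\mathbf W^\dagger(\lambda I_d+\Sigma)^{-1}(\mathbf W^\dagger)^\top P_d\beta^\star$, and then that $\mathbf W^\dagger(\lambda I_d+\mathbf W\mathbf W^\top)^{-1}(\mathbf W^\dagger)^\top = (\mathbf W^\top\mathbf W+\lambda P_d)^\dagger$ acting on ${\rm Range}(\mathbf W^\top)$; \textbf{(ii)} when $d\geq p$, $P_d=I_p$, $\mathbf W^\top\mathbf W$ is an invertible $p\times p$ matrix, and the analogous manipulation yields $\lambda\,\beta^{\star\top}(\mathbf W^\top\mathbf W+\lambda I_p)^{-1}\beta^\star$ directly.

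Finally I would invoke \Cref{lem:vector_random_matrice}: the random matrix $M:=(\Sigma+\lambda I_d)^{-1}$ (for $d<p$, after identifying the quadratic form with $\beta^{\star\top}$ of a $p\times p$ symmetric matrix via the projector argument) and $M:=(\mathbf W^\top\mathbf W+\lambda I_p)^{-1}$ (for $d\geq p$) are both invariant in law under $M\mapsto OMO^\top$ for orthogonal $O$, because $\mathbf W O^\top$ has the same distribution as $\mathbf W$ (rotating the columns of $\mathbf W^\top$ permutes nothing but re-rotates i.i.d.\ uniform sphere vectors). Hence $\esp[\beta^{\star\top}M\beta^\star]=\frac{\Vert\beta^\star\Vert_2^2}{p}\esp\mathrm{Tr}(M)$, giving exactly the two claimed formulas with the $\lambda\Vert\beta^\star\Vert_2^2/p$ prefactor. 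The main obstacle I anticipate is the pseudoinverse algebra in step (ii) above — carefully tracking how $\mathbf W^\dagger$, $(\mathbf W\mathbf W^\top)^{-1}$, and $(\mathbf W^\top\mathbf W+\lambda)^{-1}$ interact on the correct subspaces so that the $d<p$ quadratic form genuinely reduces to a trace of a $d\times d$ resolvent of $\Sigma$ rather than something living on $\R^p$; the rotational-invariance and trace steps are then routine given \Cref{lem:vector_random_matrice}.
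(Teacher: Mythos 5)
Your overall route is the paper's own: start from the ridge-type closed form \eqref{eq:BiasRidgeFormClose} (valid since $\mathrm{diag}(\Sigma)=I_d$), substitute $\Sigma=\mathbf{W}\mathbf{W}^\top$ and $\theta^\star=(\mathbf{W}^\top)^\dagger\beta^\star$, simplify via the SVD of $\mathbf{W}$, and use the rotational invariance of the law of $\mathbf{W}$ through \Cref{lem:vector_random_matrice} to turn the expected quadratic form in $\beta^\star$ into $\frac{\Vert\beta^\star\Vert_2^2}{p}\esp\mathrm{Tr}(\cdot)$; the paper merely applies the invariance lemma first and does the SVD bookkeeping inside the trace afterwards, which is only a difference of ordering. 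Your $d\geq p$ branch and the final invariance/trace step are correct.

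However, the two pseudoinverse identities you display in the $d<p$ branch are false as written, and followed literally they would yield the wrong formula. Writing $\mathbf{W}=\sum_{j=1}^{d}\sigma_j u_j v_j^\top$, the quadratic form $\theta^{\star\top}\Sigma(\Sigma+\lambda I_d)^{-1}\theta^\star$ with $\theta^\star=(\mathbf{W}^\top)^\dagger\beta^\star$ equals
\begin{equation*}
\beta^{\star\top}\,\mathbf{W}^\dagger\,\Sigma(\Sigma+\lambda I_d)^{-1}\,(\mathbf{W}^\dagger)^\top\,\beta^\star
=\beta^{\star\top}\Bigl(\sum_{j=1}^{d}\tfrac{1}{\lambda+\sigma_j^2}\,v_jv_j^\top\Bigr)\beta^\star
=\beta^{\star\top}\bigl(\mathbf{W}^\top\mathbf{W}+\lambda P_d\bigr)^\dagger\beta^\star ,
\end{equation*}
i.e.\ the factor $\Sigma$ must stay inside. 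In your version you cancel $\Sigma^\dagger\Sigma$ \emph{and} simultaneously replace $\mathbf{W}^\top\Sigma^\dagger$ by $\mathbf{W}^\dagger$, which double-counts the cancellation: indeed $\mathbf{W}^\dagger(\Sigma+\lambda I_d)^{-1}(\mathbf{W}^\dagger)^\top=\sum_{j}\tfrac{1}{\sigma_j^2(\lambda+\sigma_j^2)}v_jv_j^\top$, which is \emph{not} $(\mathbf{W}^\top\mathbf{W}+\lambda P_d)^\dagger$, and taking its expected trace would produce $\esp\mathrm{Tr}\bigl(\Sigma^{-1}(\Sigma+\lambda I_d)^{-1}\bigr)$ instead of $\esp\mathrm{Tr}\bigl((\Sigma+\lambda I_d)^{-1}\bigr)$. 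With the corrected identity above, the trace of the relevant matrix is $\sum_{j=1}^{d}(\lambda+\sigma_j^2)^{-1}=\mathrm{Tr}\bigl((\Sigma+\lambda I_d)^{-1}\bigr)$ since $\Sigma$ is $d\times d$ of full rank with eigenvalues $\sigma_j^2$, and the argument then goes through exactly as in the paper.
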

\begin{proof}
    One has for $\theta \in \mathbb{R}^p$,
    \[
R(\theta)=\sigma^{2}+\esp\left[(Z^{\top}\beta^{\star}-\theta^{\top}X)^{2}|\mathbf{W}\right].
\]
Using that $X=\mathbf{W}Z$, we have 

\begin{align*}
R(\theta) & =\sigma^{2}+\esp\left[\left(\left(\beta^{\star}-\mathbf{W}^{\top}\theta\right)^{\top}Z\right)^{2}|\mathbf{W}\right]\\
 & =\sigma^{2}+\left\Vert \beta^{\star}-\mathbf{W}^{\top}\theta\right\Vert _{2}^{2},
\end{align*}
by isotropy of $Z$. Since $\theta^\star$ minimizes the risk $R$, $\theta^\star$ minimizes the least-squares criterion above. 
Therefore, in the case where $p>d$ (the ``design" $\mathbf{W}^{\top}$ being long), $\theta^\star$ is unique and given by $\theta^\star = (\mathbf{W}\mathbf{W}^{\top})^{-1}\mathbf{W}\beta^\star$. In the case where $d<p$ (the design matrix $\mathbf{W}^\top$ being fat), there exists an infinite number of minimizers (all are solutions of the system $\beta^\star=\mathbf{W}^\top \theta$), but one can look at the solution of minimal $\ell^2$-norm. Then, KKT conditions provide the particular solution $\theta^\star = \mathbf{W} (\mathbf{W}^\top \mathbf{W})^{-1}\beta^\star$. In both cases, $\theta^\star$ can be written in the following unified way:
\[
\theta^{\star}=\left(\mathbf{W}^{\top}\right)^{\dagger}\beta^{\star}.
\]
Futhermore, 
\begin{align*}
\Sigma & =\esp\left[XX^{\top}|\mathbf{W}\right]=\esp\left[\mathbf{W}Z(\mathbf{W}Z)^{\top}|\mathbf{W}\right]=\esp\left[\mathbf{W}ZZ^{\top}\mathbf{W}^{\top}|\mathbf{W}\right] =\mathbf{W}\mathbf{W}^{\top}.
\end{align*}
Then, using \eqref{eq:BiasRidgeFormClose},
\begin{align*}
\Delta_{\rm imp/miss}+\Delta_{\rm miss} & =\lambda\esp\left\Vert \theta^{\star}\right\Vert _{\Sigma(\Sigma+\lambda I)^{-1}}\\
 & =\lambda\esp\left\Vert \left(\mathbf{W}^{\top}\right)^{\dagger}\beta^{\star}\right\Vert _{\mathbf{W}\mathbf{W}^{\top}(\mathbf{W}\mathbf{W}^{\top}+\lambda I)^{-1}}^{2}\\
 & =\lambda\esp\left\Vert \beta^{\star}\right\Vert _{\mathbf{W^{\dagger}W}\mathbf{W}^{\top}(\mathbf{W}\mathbf{W}^{\top}+\lambda I)^{-1}\left(\mathbf{W}^{\top}\right)^{\dagger}}^{2}\\
 & =\frac{\lambda\left\Vert \beta^{\star}\right\Vert _{2}^{2}}{p}\esp\mathrm{Tr}\left(\mathbf{W^{\dagger}W}\mathbf{W}^{\top}(\mathbf{W}\mathbf{W}^{\top}+\lambda I)^{-1}\left(\mathbf{W}^{\top}\right)^{\dagger}\right),
\end{align*}
using \Cref{lem:vector_random_matrice} remarking that, for all orthogonal matrix $O\in\R^{p\times p}$
\begin{align*}
    O\mathbf{W^{\dagger}W}\mathbf{W}^{\top}(\mathbf{W}\mathbf{W}^{\top}+\lambda I)^{-1}\left(\mathbf{W}^{\top}\right)^{\dagger}O^\top&= O\mathbf{W^{\dagger}W}O^\top O\mathbf{W}^{\top}(\mathbf{W}O^\top O\mathbf{W}^{\top}+\lambda I)^{-1}\left(\mathbf{W}^{\top}\right)^{\dagger}O^\top\\
    &= (\mathbf{W}O^\top)^{\dagger}\mathbf{W}O^\top (\mathbf{W}O^\top)^{\top}(\mathbf{W}O^\top (\mathbf{W}O^\top)^{\top}+\lambda I)^{-1}\left((\mathbf{W}O^\top)^{\top}\right)^{\dagger},
\end{align*}
by orthogonality of $O$ ($O^\top O=I_p$). 
Then, $O\mathbf{W^{\dagger}W}\mathbf{W}^{\top}(\mathbf{W}\mathbf{W}^{\top}+\lambda I)^{-1}\left(\mathbf{W}^{\top}\right)^{\dagger}O^\top$ has the same distribution as  $\mathbf{W^{\dagger}W}\mathbf{W}^{\top}(\mathbf{W}\mathbf{W}^{\top}+\lambda I)^{-1}\left(\mathbf{W}^{\top}\right)^{\dagger}$, since $\mathbf{W}O^\top \overset{dist}{=}\mathbf{W}$. 

Consider the singular value decomposition (SVD) of $\mathbf{W}$,
\[
\mathbf{W}=\sum_{j=1}^{r}\sigma_{j}u_{j}v_{j}^{\top},
\]
where $r=p\wedge d$ is the rank of $\mathbf{W^{\dagger}}$, $(u_{j})$
is an orthonormal basis of $\R^{d},$ and $(v_{j})$ is an orthonormal
basis of $\R^{p}$. The SVD of its pseudo-inverse is therefore
\[
\mathbf{W}^{\dagger}=\sum_{j=1}^{r}\sigma_{j}^{-1}v_{j}u_{j}^{\top}.
\]
Then, we obtain 
\[
\mathbf{W^{\dagger}W}\mathbf{W}^{\top}(\mathbf{W}\mathbf{W}^{\top}+\lambda I)^{-1}\left(\mathbf{W}^{\top}\right)^{\dagger}=\sum_{j=1}^{r}\frac{1}{\lambda+\sigma_{j}^{2}}u_{j}u_{j}^{\top}.
\]
Thus, $$\mathrm{Tr}\left(\mathbf{W^{\dagger}W}\mathbf{W}^{\top}(\mathbf{W}\mathbf{W}^{\top}+\lambda I)^{-1}\left(\mathbf{W}^{\top}\right)^{\dagger}\right)=\sum_{j=1}^{r}\frac{1}{\lambda+\sigma_{j}^{2}}.$$
We recognize $(\lambda+\sigma_{j}^{2})_{j\in[r]}$ as the eigenvalues of $\Sigma+\lambda I_{d}=\mathbf{W}\mathbf{W}^{\top}+\lambda I_{d}$
when $d<p$ and $\text{rank}(\mathbf{W})=d$, or as the eigenvalues of $\mathbf{W}^{\top}\mathbf{W}+\lambda I_{p}$ when
$d\geq p$ and $\text{rank}(\mathbf{W})=p$. 
Hence, 
\[
\Delta_{\rm imp/miss}+\Delta_{\rm miss}=\begin{cases}
\frac{\lambda\left\Vert \beta^{\star}\right\Vert _{2}^{2}}{p}\esp\mathrm{Tr}\left((\Sigma+\lambda I_{d})^{-1}\right) & \text{if }d<p\\
\frac{\lambda\left\Vert \beta^{\star}\right\Vert _{2}^{2}}{p}\esp\mathrm{Tr}\left((\mathbf{W}^{\top}\mathbf{W}+\lambda I_{p})^{-1}\right) & \text{if }d\geq p.
\end{cases}
\] 
\end{proof}

\subsubsection{Proof of \eqref{eq:RimpUpperBound} ($d<p-1$)} 

\textbf{(First step) Decomposition of $R_{\imp}^\star(d)$.}
Note that for $x\geq 0$ (to be chosen later), one has 
\begin{equation*}
    R_{\imp}^\star(d)\leq R_{\imp}(x\theta^\star)\leq R(x\rho\theta^\star)+\rho(1-\rho)\Vert x\theta^\star\Vert_2^2 = R(x\rho\theta^\star)+\rho(1-\rho)x^2\Vert \theta^\star\Vert_2^2,
\end{equation*}
using \Cref{lem_previous_work1}. 
Then, 
\begin{equation*}
    R_{\imp}^\star(d)-R^\star(d)\leq R(x\rho\theta^\star)-R^\star(d)+\rho(1-\rho)x^2\Vert \theta^\star\Vert_2^2.
\end{equation*}
Note that, 
\begin{align*}
    R(x\rho\theta^\star)-R^\star(d)&=\Vert x\rho\theta^\star-\theta^\star\Vert_{\Sigma}^2\\
    &= (1-x\rho)^2\Vert \theta^\star\Vert_{\Sigma}^2. 
\end{align*}
Thus, we have, 
\begin{equation}
    R_{\imp}^\star(d)-R^\star(d)\leq (1-x\rho)^2\Vert \theta^\star\Vert_{\Sigma}^2+x^2\rho(1-\rho)\Vert \theta^\star\Vert_2^2.
\end{equation}

\textbf{(Second step) Calculus of $\esp \Vert \theta^\star\Vert_2^2$.} 
Since $\theta^\star= (\mathbf{W}^\top)^\dag \beta$,
\begin{equation*}
    \esp \Vert \theta^\star\Vert_2^2= \esp \beta^\top(\mathbf{W}^\dag(\mathbf{W^\top})^\dag) \beta.  
\end{equation*}
Again, for an orthonormal matrix $O$, $O\mathbf{W}^\dag(\mathbf{W^\top})^\dag O^\top= (\mathbf{W}O^\top)^\dag((\mathbf{W}O^\top)^\top)^\dag$ has the same law as that of $\mathbf{W}^\dag(\mathbf{W^\top})^\dag$ because $\mathbf{W}O^\top$ has the same law of $\mathbf{W}$. 
Using \Cref{lem:vector_random_matrice}, 
\begin{align*}
    \esp \Vert \theta^\star\Vert_2^2 & = \Vert\beta\Vert_2^2\frac{\esp \mathrm{Tr}\left((\mathbf{W}^\dag)^\top \mathbf{W}^\dag\right)}{p}\\
    & = \Vert\beta\Vert_2^2\frac{\esp \Vert \mathbf{W}^\dag\Vert_{\rm Fr}^2}{p},
\end{align*}
by definition of the Frobenius norm. Then, by \Cref{lem:RMfrobenius}, we obtain 
\begin{equation}\label{eq:norm2boundRFgaussien}
    \esp \Vert \theta^\star\Vert_2^2= \frac{d}{p}\left(1+\frac{d}{p-d-1}\right)\Vert\beta\Vert_2^2. 
\end{equation}

\textbf{(Third step) Calculus of $\esp \Vert \theta^\star\Vert_{\Sigma}^2$.} We have by optimization result, 
\begin{equation*}
    \esp Y^2=  \Vert \theta^\star\Vert_{\Sigma}^2+ R^\star(d).
\end{equation*}
Using that, $\esp Y^2=\sigma^2+\Vert \beta\Vert_2^2$, and taking the expectation, we obtain
\begin{equation*}
    \sigma^2+\Vert \beta\Vert_2^2= \esp \Vert \theta^\star\Vert_{\Sigma}^2+ \esp R^\star(d).
\end{equation*}
Furthermore, by \Cref{prop:RiskcompletDD}, 
\begin{equation*}
        \esp R^\star(d)=\sigma^2+ \frac{p-d}{p}\Vert \beta \Vert_2^2. 
\end{equation*}
Thus, we obtain, 
\begin{equation}
    \esp \Vert \theta^\star\Vert_{\Sigma}^2= \frac{d}{p}\Vert \beta \Vert_2^2. 
\end{equation}

\textbf{(Fourth step) Conclusion.} Putting things together, one gets

\begin{align*}
\esp\left[R_{\imp}^\star(d)-R^\star(d)\right] &\leq (1-x\rho)^2\esp\Vert \theta^\star\Vert_{\Sigma}^2+x^2\rho(1-\rho)\esp\Vert \theta^\star\Vert_2^2\\
    &= \frac{d}{p}\Vert \beta \Vert_2^2\left((1-x\rho)^2 + x^2(1-\rho)\rho\left(1+\frac{d-1}{p-d-1}\right) \right). 
\end{align*}
The bound on the right hand side can be optimized with respect to $x$. 
It corresponds to a strongly convex function of the form $f:x\longmapsto (1-ax)^2+bx^2$. We have $f'(x)=-2a(1-ax)+2bx$, so that the only critical point is $x^\star=\frac{a}{a^2+b}$, leading to  $\min f= f(x^\star)=\frac{b}{a^2+b}$. Therefore,

\begin{align*}
\esp\left[R_{\imp}^\star(d)-R^\star(d)\right] &\leq \frac{d}{p}\Vert \beta \Vert_2^2\frac{(1-\rho)\rho\left(1+\frac{d-1}{p-d-1}\right)}{\rho^2+(1-\rho)\rho\left(1+\frac{d-1}{p-d-1}\right)}\\
&= \frac{d}{p}\Vert \beta \Vert_2^2\frac{(1-\rho)\left(1+\frac{d-1}{p-d-1}\right)}{\rho+(1-\rho)\left(1+\frac{d-1}{p-d-1}\right)}\\
&= \frac{d}{p}\Vert \beta \Vert_2^2\frac{(1-\rho)\left(p-2\right)}{\rho(p-d-1)+(1-\rho)\left(p-2\right)}\\
&= \frac{d}{p}\Vert \beta \Vert_2^2\frac{(1-\rho)\left(p-2\right)}{p-\rho(d-1)-2}\\
&= \frac{d}{p}\Vert \beta \Vert_2^2\left((1-\rho)+ (1-\rho)\frac{\left(p-2\right)-(p-\rho(d-1)-2)}{p-\rho(d-1)-2}\right)\\
&= (1-\rho)\frac{d}{p}\Vert \beta \Vert_2^2\left(1+ \frac{\rho(d-1)}{p-\rho(d-1)-2}\right),
\end{align*}
which leads to the desired result. We obtain also \eqref{eq:DeltaimpmissLowDim} and \eqref{eq:DeltaimpmissLowDim3} using equality obtain in \Cref{prop:Riskmissing}.

\subsubsection{Proof of upper and lower bounds \eqref{eq:DeltaimpmissHighDim1} \eqref{eq:DeltaimpmissHighDim2} ($d\geq p$) }

Using \Cref{lem:DeltaImpFormTrace}, we have 
\begin{equation*}
    \Delta_{\rm imp/miss}+\Delta_{\rm miss}=\frac{\lambda\left\Vert \beta^{\star}\right\Vert _{2}^{2}}{p}\esp\mathrm{Tr}\left((\mathbf{W}^{\top}\mathbf{W}+\lambda I_{p})^{-1}\right)
\end{equation*}
Remark that $\mathbf{W}^{\top}\mathbf{W}=\sum_{j=1}^d W_j W_j^\top$. Furthermore, note that when $W_1\sim\mathcal{U}(\mathbb{S}^{p-1})$, 
for any $1\leq k \leq p$, $W_1 = (W_{11},\hdots, W_{1k}, \hdots , W_{1p})^\top$ has the same distribution as $(W_{11},\hdots, -W_{1k}, \hdots , W_{1p})^\top$. Therefore, for all $1\leq k\neq k' \leq p$, $\mathbb{E}[W_{1k}W_{1k'}] = -\mathbb{E}[W_{1k}W_{1k'}]$, leading to
$\mathbb{E}[W_{1k}W_{1k'}]=0$.
Furthermore $\mathbb{E}[W_{11}^2+\hdots + W_{1p}^2]=\mathbb{E}[W_{11}^2] +\hdots + \mathbb{E}[W_{1p}^2]=1$, so that by exchangeability, for all $1\leq k \leq p$, $\mathbb{E}[W_{1k}^2] =1/p$ and finally $\esp W_1W_1^\top=\frac{1}{p} I_p$.

Applying \Cref{prop:exchangeabilityResults}, we obtain 
\begin{equation*}
    \frac{\lambda\left\Vert \beta^{\star}\right\Vert _{2}^{2}}{p} \frac{p}{d/p+\lambda}\leq \Delta_{\rm imp/miss}+\Delta_{\rm miss}\leq \frac{\lambda\left\Vert \beta^{\star}\right\Vert _{2}^{2}}{p}(1+1/\lambda)\frac{p}{d/p+\lambda},
\end{equation*}
and
\begin{equation*}
    \left\Vert \beta^{\star}\right\Vert _{2}^{2}(1-\rho) \frac{p}{\rho d+(1-\rho)p}\leq \Delta_{\rm imp/miss}+\Delta_{\rm miss}\leq \left\Vert \beta^{\star}\right\Vert _{2}^{2} \frac{p}{\rho d+(1-\rho)p}.
\end{equation*}

\section{Proof of \Cref{sec:ExtensionRF}}\label{app:Extension}

\subsection{Proof of \Cref{thm:DeltaExtension}}
Start by writing 
\begin{align*}
    \Delta_{\rm imp}^{(\infty)}&= \esp R_{\rm imp}^\star(d)-R^\star(\infty)\\
    &= \Delta_{\rm imp/miss}+\Delta_{\rm miss}+\esp R^\star(d)-R^\star(\infty)\\
    &= \esp \inf_{\theta\in\R^d}\left\{R(\theta)-R^\star(\infty)+\frac{1-\rho}{\rho}\Vert\theta\Vert_{{\rm diag}(\Sigma)}^2\right\},
\end{align*}
using \eqref{eq:BiasRidgeForm}. Considering \Cref{ass:nongaussianRF}, $\mathrm{diag}(\Sigma)\preceq L^2I_d$, that leads to 
\begin{equation*}
    \Delta_{\rm imp}^{(\infty)}\leq \esp \inf_{\theta\in\R^d}\left\{R(\theta)-R^\star(\infty)+L^2\frac{1-\rho}{\rho}\Vert\theta\Vert_{2}^2\right\}.  
\end{equation*}
Fixing $\lambda= L^2\frac{1-\rho}{\rho}$, we aim at providing an upper bound for 
\begin{equation*}
    \Delta_{\lambda}:= \esp \inf_{\theta\in\R^d}\left\{R(\theta)-R^\star(\infty)+\lambda\Vert\theta\Vert_{2}^2\right\}.  
\end{equation*}
Let $f\in\F_{\nu}^{(\infty)}$ with one representative $\alpha\in L^2(\nu)$.
We set $\theta^{(\alpha)}\in\R^{d}$ such that for all $j\in[d]$, $\theta_{j}^{(\alpha)}=\frac{1}{d}\alpha(W_{j})$.
We have,

\begin{align*}
\Delta_{\lambda}  +R^\star(\infty) & =\esp\inf_{\theta\in\R^{d}}\left\{ R\left(\theta\right)+\lambda\esp\left[\left\Vert \theta\right\Vert _{2}^{2}|\mathbf{W}\right]\right\} \\
 & \leq\esp\left[R\left(\theta^{(\alpha)}\right)+\lambda\esp\left[\left\Vert \text{\ensuremath{\theta}}^{(\alpha)}\right\Vert _{2}^{2}|\mathbf{W}\right]\right]\\
 & =\esp R\left(\text{\ensuremath{\theta}}^{(\alpha)}\right)+\lambda\esp\left[\left\Vert \text{\ensuremath{\theta}}^{(\alpha)}\right\Vert _{2}^{2}\right]
\end{align*}
\textbf{First term.} Remark that by definition of random features \eqref{eq:RFmodel},  $X^\top\text{\ensuremath{\theta}}^{(\alpha)}=\sum_{j=1}^{d}\theta_{j}^{(\alpha)}\psi (Z,W_j)=\frac{1}{d}\sum_{j=1}^{d}\alpha(W_{j})\psi (Z,W_j)$.
In consequence, $\esp\left[X^\top\text{\ensuremath{\theta}}^{(\alpha)}|Z\right]=\int\alpha(W)\psi (Z,W)d\nu(W)=f(Z)$.
Then 
\begin{align*}
\esp R\left(\text{\ensuremath{\theta}}^{(\alpha)}\right) & =\esp\left[\esp\text{\ensuremath{\left[\left(X^\top\text{\ensuremath{\theta}}^{(\alpha)}-Y\right)^{2}|\mathbf{W}\right]}}\right]\\
 & =\esp\left[\esp\text{\ensuremath{\left[\left(X^\top\text{\ensuremath{\theta}}^{(\alpha)}-Y\right)^{2}|Z\right]}}\right] & {\text{using Fubini's theorem}}\\
 & =\esp\left[\esp\text{\ensuremath{\left[\left(X^\top\text{\ensuremath{\theta}}^{(\alpha)}-f(Z)+f(Z)-Y\right)^{2}|Z\right]}}\right]\\
 & =\esp\left[\esp\text{\ensuremath{\left[\left(X^\top\text{\ensuremath{\theta}}^{(\alpha)}-f(Z)\right)^{2}+\left(f(Z)-Y\right)^{2}|Z\right]}}\right] & \text{using }\esp\left[X^\top\text{\ensuremath{\theta}}^{(\alpha)}|Z\right]=f(Z)\\
 & =\esp\left[\mathbb{V}\left[X^\top\text{\ensuremath{\theta}}^{(\alpha)}|Z\right]\right]+R(f).
\end{align*}
Then, 
\begin{align*}
\mathbb{V}\left[X^\top\text{\ensuremath{\theta}}^{(\alpha)}|Z\right] & =\mathbb{V}\left[\frac{1}{d}\sum_{j=1}^{d}\alpha(W_{j})\psi (Z,W_j)|Z\right]&\\
 & =\frac{1}{d}\mathbb{V}_{\nu}\left[\alpha(W)\psi (Z,W)|Z\right] & \phantom{aaaaaaaaaaaa}(W_j) \text{  being i.i.d}\\
 & \leq\frac{1}{d}\esp_{\nu}\left[\alpha(W)^{2}\psi (Z,W)^{2}|Z\right]&.
\end{align*}
Then using Fubini's theorem, 
\begin{align*}
\mathbb{V}\left[X^\top\text{\ensuremath{\theta}}^{(\alpha)}\right] & \leq  \frac{1}{d}\esp\left[\esp\left[\alpha(W)^{2}\psi (Z,W)^{2}|W\right]\right]=\frac{1}{d}\esp\left[\alpha(W)^{2}\esp\left[\psi (Z,W)^{2}|W\right]\right].
\end{align*}
Under \Cref{ass:nongaussianRF},
\begin{align*}
\mathbb{V}\left[X^\top\text{\ensuremath{\theta}}^{(\alpha)}\right] & \leq  \frac{L^{2}}{d}\esp\left[\alpha(W)^{2}\right].
\end{align*}
Thus, 
\[
\esp R\left(\text{\ensuremath{\theta}}^{(\alpha)}\right)\leq\frac{L^{2}}{d}\esp_{\nu}\left[\alpha(W)^{2}\right]+R(f).
\]
\textbf{Second term.} 
\begin{align*}
\esp\left[\left\Vert \text{\ensuremath{\theta}}^{(\alpha)}\right\Vert _{2}^{2}\right] & =\esp\left[\sum_{j=1}^{d}\theta_{j}^{2}\right]\\
 & =\frac{1}{d}\esp\left[\frac{1}{d}\sum_{j=1}^{d}\alpha(W_{j})^{2}\right]\\
 & =\frac{1}{d}\esp\left[\alpha(W)^{2}\right],
\end{align*}
using that $(W_j)_j$ are i.i.d..

\textbf{Conclusion.} Combining these two terms, we have 
\[
\Delta_{\lambda}  +R^\star(\infty)\leq R(f)+\frac{\lambda+L^{2}}{d}\esp_{\nu}\left[\alpha(W)^{2}\right].
\]
This result is valid for any $f$ and $\alpha$, thus 

\[
\Delta_{\rm imp}^{(\infty)}+R^\star(\infty)\leq \Delta_{\lambda}  +R^\star(\infty)\leq\inf_{f\in\F^{(\nu)}}\left\{ R(f)+\frac{\lambda+L^{2}}{d}\left\Vert f\right\Vert _{\nu}^{2}\right\} .
\]

\section{Proof of error bounds for SGD estimators }\label{app:SGDbound}
\subsection{Proof of equation~\eqref{eq:GenGaussianLD} of \Cref{thm:sgd_rf_gaussian}}
In this section, we apply results from the SGD literature, in particular, \citet[][Theorem 1]{bach2013non}, to our framework.
\begin{theorem} \label{thm:bachmoulines}
In the framework of \Cref{sec:DD}, for $\gamma= \frac{1}{6 d}$, and when $d<n$, we have 
\begin{equation}
    R_{\imp}(\bar \theta)-R_{\imp}^\star(d) \lesssim \frac{d}{n}\Vert\theta_{\rm imp}^\star\Vert_2^2+ \frac{d}{n}\sigma_{\rm imp}^2,
\end{equation}
with $\sigma_{\rm imp}^2:= \sigma^2+ \rho^{-1}(R_{\imp}^\star(d)-R^\star(d)+\left\Vert \theta^\star\right\Vert _{\Sigma}^{2})$.
    
\end{theorem}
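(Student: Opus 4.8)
I would derive this by specialising the averaged-SGD bound for least squares of \citet{bach2013non} (Theorem~1) to the imputed problem, working \emph{conditionally on the weight matrix $\mathbf{W}$}. Given $\mathbf{W}$, the pairs $(\tilde X_i,Y_i)_{i\in[n]}$ are i.i.d.\ and the recursion~\eqref{eq:SGDiteration} is exactly the constant-step LMS iteration for $\theta\mapsto R_{\imp}(\theta)$, whose minimiser is $\theta_{\imp}^\star$ and whose Hessian is $\Sigma_{\imp}=\esp[\tilde X\tilde X^{\top}\mid\mathbf{W}]$. Since the $W_j$ are unit vectors, $\mathrm{diag}(\Sigma)=I_d$, so $\Sigma_{\imp}=\rho^{2}\Sigma+\rho(1-\rho)I_d\succeq\rho(1-\rho)I_d\succ0$ and the problem is non-degenerate; \citet{bach2013non} then applies once one exhibits a self-bounding radius $R^{2}$ with $\esp[\|\tilde X\|_2^{2}\,\tilde X\tilde X^{\top}\mid\mathbf{W}]\preceq R^{2}\Sigma_{\imp}$ and a noise level $\sigma^{2}$ controlling the stochastic gradient at $\theta_{\imp}^\star$, with $\gamma=1/(6d)$ meeting the admissibility condition $\gamma\lesssim 1/R^{2}$; Theorem~1 there gives $\esp[R_{\imp}(\bar\theta)\mid\mathbf{W}]-R_{\imp}^\star(d)\lesssim \tfrac1n\big(\sigma\sqrt d+R\|\theta_{\imp}^\star\|_2\big)^{2}\lesssim \tfrac{d}{n}\sigma^{2}+\tfrac{R^{2}}{n}\|\theta_{\imp}^\star\|_2^{2}$.

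\textbf{Self-bounding radius.} I would first prove $R^{2}\lesssim d$. From $\|\tilde X\|_2\le\|X\|_2$, $\tilde X\tilde X^{\top}=(XX^{\top})\odot(PP^{\top})\succeq0$ and $P\perp X$ given $\mathbf{W}$, one gets $\esp[\|\tilde X\|_2^{2}\,\tilde X\tilde X^{\top}\mid\mathbf{W}]\preceq\esp[\|X\|_2^{2}XX^{\top}\mid\mathbf{W}]\odot\esp[PP^{\top}]$. Conditionally on $\mathbf{W}$, $X=\mathbf{W}Z$ is centred Gaussian with covariance $\Sigma=\mathbf{W}\mathbf{W}^{\top}$, so the Gaussian fourth-moment identity gives $\esp[\|X\|_2^{2}XX^{\top}\mid\mathbf{W}]=\mathrm{Tr}(\Sigma)\Sigma+2\Sigma^{2}\preceq\big(\mathrm{Tr}(\Sigma)+2\|\Sigma\|_{\mathrm{op}}\big)\Sigma\preceq 3d\,\Sigma$, using $\mathrm{Tr}(\Sigma)=\sum_{j}\|W_j\|_2^{2}=d$ and $\|\Sigma\|_{\mathrm{op}}\le\mathrm{Tr}(\Sigma)=d$. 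Feeding this into the previous display and applying \Cref{lem:hadamard_monotonicity} with $V=\esp[PP^{\top}]\succeq0$ yields $\esp[\|\tilde X\|_2^{2}\,\tilde X\tilde X^{\top}\mid\mathbf{W}]\preceq 3d\,(\Sigma\odot\esp[PP^{\top}])=3d\,\Sigma_{\imp}$, so $R^{2}=3d$ and the prescribed step size is admissible.

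\textbf{Noise level.} By first-order optimality of $\theta_{\imp}^\star$, the residual $\xi:=Y-\tilde X^{\top}\theta_{\imp}^\star$ satisfies $\esp[\xi\,\tilde X\mid\mathbf{W}]=0$, so $\theta_{\imp}^\star$ is the optimum in \citet{bach2013non}, and it remains to certify the noise hypothesis, for which the effective level turns out to be $\sigma_{\imp}^{2}$; this amounts to a Loewner bound on $\esp[\xi^{2}\tilde X\tilde X^{\top}\mid\mathbf{W}]$. I would split $Y=X^{\top}\theta^\star+\bar\xi$, with $\bar\xi=Z^{\top}(I-P_d)\beta^\star+\epsilon$ the part of $Y$ orthogonal to $\mathrm{Span}(W_1,\dots,W_d)$ ($P_d$ the corresponding orthogonal projector): by Cochran's theorem $\bar\xi$ is independent of $(X,P)$ given $\mathbf{W}$, so the cross terms vanish and $\esp[\xi^{2}\tilde X\tilde X^{\top}\mid\mathbf{W}]$ splits into a multiple of $\Sigma_{\imp}$ plus $\esp[\delta^{2}\tilde X\tilde X^{\top}\mid\mathbf{W}]$ with $\delta:=X^{\top}\theta^\star-\tilde X^{\top}\theta_{\imp}^\star=X^{\top}(\theta^\star-P\odot\theta_{\imp}^\star)$. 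For the $\delta$-term I would condition on $P$, invoke the Gaussian fourth-moment identity once more together with $\Sigma uu^{\top}\Sigma\preceq(u^{\top}\Sigma u)\Sigma$ and \Cref{lem:hadamard_monotonicity} to bound it by a multiple of $\mathrm{diag}(P)\Sigma\,\mathrm{diag}(P)$, then push this back onto $\Sigma_{\imp}$ via $\Sigma_{\imp}\succeq\rho(1-\rho)I_d$ and average over $P$. Tracking the constants — using \Cref{lem_previous_work1}, which gives $\esp[\delta^{2}\mid\mathbf{W}]=R_{\imp}^\star(d)-R^\star(d)$, and $\|\theta^\star\|_{\Sigma}^{2}=\esp[(X^{\top}\theta^\star)^{2}\mid\mathbf{W}]$ — then delivers the level $\sigma_{\imp}^{2}=\sigma^{2}+\rho^{-1}\big(R_{\imp}^\star(d)-R^\star(d)+\|\theta^\star\|_{\Sigma}^{2}\big)$, along the lines of the gradient-noise estimate of \citet{ayme2023naive}.

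\textbf{Conclusion and main obstacle.} Substituting $R^{2}\lesssim d$, $\theta_{0}=0$ and $\sigma^{2}=\sigma_{\imp}^{2}$ into the \citet{bach2013non} estimate recalled above (the hypothesis $d<n$ serving only to keep the bound informative) gives $\esp[R_{\imp}(\bar\theta)\mid\mathbf{W}]-R_{\imp}^\star(d)\lesssim\tfrac dn\|\theta_{\imp}^\star\|_2^{2}+\tfrac dn\sigma_{\imp}^{2}$, which is the claim; the outer expectation over $\mathbf{W}$ is taken later when deriving~\eqref{eq:GenGaussianLD}. The self-bounding step is routine, so the real obstacle is the noise-level step: since zero-imputation makes the linear model on $\tilde X$ misspecified, the gradient noise at $\theta_{\imp}^\star$ is heteroscedastic and is coupled to the features through the random mask $P$, and extracting the \emph{tight} effective level $\sigma_{\imp}^{2}$ from it — rather than a loose pattern-by-pattern maximum — is exactly where the work lies and where the factor $\rho^{-1}$ comes from.
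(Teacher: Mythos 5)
Your overall architecture is the paper's: condition on $\mathbf{W}$, apply Theorem~1 of \citet{bach2013non} to the LMS recursion \eqref{eq:SGDiteration}, and verify a self-bounding radius plus a noise-level condition of the Loewner form $\esp[\epsilon_{\imp}^{2}\tilde X\tilde X^{\top}\mid\mathbf{W}]\preceq\sigma_{\rm noise}^{2}\Sigma_{\imp}$. Your radius step is essentially the paper's argument (Gaussian fourth moments giving $\esp[\Vert X\Vert_2^2XX^\top\mid\mathbf W]\preceq 3d\,\Sigma$, independence of $P$ and $X$, and \Cref{lem:hadamard_monotonicity} to land on $3d\,\Sigma_{\imp}$), and is fine.

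The gap is the noise step, which you yourself identify as ``where the work lies'' but do not carry out, and the route you sketch does not yield the stated $\sigma_{\imp}^{2}$. First, for $\esp[\delta^{2}\tilde X\tilde X^{\top}\mid\mathbf W]$ with $\delta=X^{\top}u_P$, $u_P:=\theta^\star-P\odot\theta_{\imp}^\star$, conditioning on $P$ and using the fourth-moment identity leaves you with $3\,(u_P^{\top}\Sigma u_P)\,\mathrm{diag}(P)\Sigma\,\mathrm{diag}(P)$, where the scalar weight $u_P^{\top}\Sigma u_P$ depends on the mask; your device ``push back onto $\Sigma_{\imp}$ via $\Sigma_{\imp}\succeq\rho(1-\rho)I_d$ and average over $P$'' cannot convert this into $\rho^{-1}\bigl(R_{\imp}^\star(d)-R^\star(d)+\Vert\theta^\star\Vert_{\Sigma}^{2}\bigr)\Sigma_{\imp}$: extracting the weight needs either a pattern-wise supremum (which you rightly want to avoid) or a bound like $u_P^{\top}\Sigma u_P\le\Vert\Sigma\Vert_{\rm op}\Vert u_P\Vert_2^2$, and the $\rho(1-\rho)$ route injects a $(1-\rho)^{-1}$ (and potentially a dimension factor) absent from $\sigma_{\imp}^{2}$. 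The paper's mechanism is different and is precisely where the clean $\rho^{-1}$ comes from: it bounds $v^{\top}\esp[(u^{\top}\tilde X)^{2}\tilde X\tilde X^{\top}]v$ via Cauchy--Schwarz and Gaussian kurtosis by $3\rho\Vert u\Vert_\Sigma^2\Vert v\Vert_\Sigma^2$, then uses $\Sigma_{\imp}\succeq\rho^{2}\Sigma$ to get $\esp[(u^{\top}\tilde X)^{2}\tilde X\tilde X^{\top}]\preceq3\rho^{-1}\Vert u\Vert_\Sigma^2\Sigma_{\imp}$ for $u\in\{\theta^\star,\theta_{\imp}^\star\}$, and finally $\Vert\theta_{\imp}^\star\Vert_\Sigma^2\le2(R_{\imp}^\star(d)-R^\star(d)+\Vert\theta^\star\Vert_\Sigma^2)$. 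Second, your (otherwise cleaner) exact orthogonal split $Y=X^{\top}\theta^\star+\bar\xi$ makes the homoscedastic contribution $\esp[\bar\xi^{2}\mid\mathbf W]\,\Sigma_{\imp}=R^\star(d)\,\Sigma_{\imp}$ with $R^\star(d)=\sigma^{2}+\Vert(I-\Pi_d)\beta^\star\Vert_2^2$ ($\Pi_d$ the projector onto $\mathrm{Span}(W_1,\dots,W_d)$), not $\sigma^{2}\,\Sigma_{\imp}$; so even with the $\delta$-term repaired, your route proves the bound with $\sigma^{2}$ replaced by $R^\star(d)$, which is strictly weaker than the statement when $d<p$. (To be fair, the paper obtains $\sigma^{2}$ by writing $Y=X^{\top}\theta^\star+\epsilon$, exact only when $\beta^\star$ lies in the span of the $W_j$, e.g.\ $d\ge p$; your larger level $R^\star(d)\le\sigma^{2}+\Vert\beta^\star\Vert_2^2$ would still suffice for \eqref{eq:GenGaussianLD}, but it is not the theorem as stated.)
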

\begin{proof}
    The proof of this theorem consists of verifying that assumptions of \citet[][Theorem 1]{bach2013non} hold in our case.  Assumptions (A1-5) are easily satisfied. Let us show that $
\esp\left[\tilde X\tilde X^{\top}\left\Vert \tilde X\right\Vert _{2}^{2} |\mathbf{W}\right]\preceq R^{2}\Sigma_{\imp}
$. Indeed,
\[
\esp\left[\tilde X\tilde X^{\top}\left\Vert \tilde X\right\Vert _{2}^{2}|\mathbf{W}\right]\preceq\esp\left[\tilde X\tilde X^{\top}\left\Vert X\right\Vert _{2}^{2}|\mathbf{W}\right],
\]

using that $\left\Vert \tilde X\right\Vert _{2}^{2}\leq\left\Vert X\right\Vert _{2}^{2}$, and $0\preceq \tilde X\tilde X^{\top}$.
Then, 
\begin{align*}
\esp\left[\tilde X\tilde X^{\top}\left\Vert X\right\Vert _{2}^{2}|\mathbf{W}\right] & =\esp\esp\left[\tilde X\tilde X^{\top}\left\Vert X\right\Vert _{2}^{2}|P,\mathbf{W}\right]\\
 & =\esp\esp\left[PP^{\top}\odot XX^{\top}\left\Vert X\right\Vert _{2}^{2}|P,\mathbf{W}\right]\\
 & =\esp\left[\Sigma_{P}\odot XX^{\top}\left\Vert X\right\Vert _{2}^{2}|\mathbf{W}\right]\\
 & =\Sigma_{P}\odot\left(\esp\left[XX^{\top}\left\Vert X\right\Vert _{2}^{2}|\mathbf{W}\right]\right).
\end{align*}

$X$ is Gaussian vector, thus $\esp\left[XX^{\top}\left\Vert X\right\Vert _{2}^{2}\right]\preceq R^{2}\Sigma$ with $R^2=3 d$,
and \cref{lem:hadamard_monotonicity} lead to 
\[
\esp\left[\tilde X\tilde X^{\top}\left\Vert \tilde X\right\Vert _{2}^{2}\right]\preceq R^{2}\Sigma_{P}\odot\Sigma=R^{2}\Sigma_{\imp}.
\]

 Define $\epsilon_{\imp}=Y-\tilde X^{\top}\theta_{\imp}^{\star}=X^{\top}\theta^{\star}+\epsilon-\tilde X^{\top}\theta_{\imp}^{\star}$
. First, we have $\epsilon_{\imp}^{2}\leq 3\left(\epsilon^{2}+\left(\tilde X^{\top}\theta_{\imp}^{\star}\right)^{2}+\left(X^{\top}\theta^{\star}\right)^{2}\right)$,
then 
\begin{align}
\label{eq:3terms}
\esp\left[\epsilon_{\imp}^{2} \tilde X\tilde X^\top \right] & \preceq 3 \esp\left[\epsilon^2 \tilde X\tilde X^\top\right]+3 \esp\left[\left(\tilde X^{\top}\theta_{\imp}^{\star}\right)^{2} \tilde X\tilde X^\top\right]+3\esp\left[\left(\tilde X^{\top}\theta^{\star}\right)^{2} \tilde X\tilde X^\top\right].
\end{align}
Using that $\epsilon$ is an independent noise,  $\esp\left[\epsilon^2 \tilde X\tilde X^\top\right]=\sigma^2\Sigma_{\rm imp}$. Let $u,v$ in $\R^d$, note that 
\begin{align*}
v^{\top}\esp\left[\left(u^{\top}\tilde{X}\right)^{2}\tilde{X}\tilde{X}^{\top}\right]v & =\esp\left[\left(u^{\top}\tilde{X}\right)^{2}\left(v^{\top}\tilde{X}\right)^{2}\right]\\
 & \leq\esp\left[\left(u^{\top}X\right)^{2}\left(v^{\top}\tilde{X}\right)^{2}\right]\\
 & \leq\rho\esp\left[\left(u^{\top}X\right)^{2}\left(v^{\top}X\right)^{2}\right]\\
 & \leq\rho\sqrt{\esp\left[\left(u^{\top}X\right)^{4}\right]\esp\left[\left(v^{\top}X\right)^{4}\right]},
\end{align*}
using the Cauchy-Schwarz inequality. Then, by the kurtosis boundedness of Gaussian vectors, we have
$\esp\left[\left(u^{\top}X\right)^{4}\right]\leq3\left\Vert u\right\Vert _{\Sigma}^{4}$
and $\esp\left[\left(v^{\top}X\right)^{4}\right]\leq3\left\Vert v\right\Vert _{\Sigma}^{4}$.
Then, 

\begin{align*}
v^{\top}\esp\left[\left(u^{\top}\tilde{X}\right)^{2}\tilde{X}\tilde{X}^{\top}\right]v & =3\rho\left\Vert u\right\Vert _{\Sigma}^{2}\left\Vert v\right\Vert _{\Sigma}^{2}\\
 & \leq3\rho^{-1}\left\Vert u\right\Vert _{\Sigma}^{2}\left\Vert v\right\Vert _{\Sigma_{{\rm imp}}}^{2}.
\end{align*} 

This shows that
\begin{equation}
    \esp\left[\left(u^{\top}\tilde{X}\right)^{2}\tilde{X}\tilde{X}^{\top}\right]\preceq 3\rho^{-1}\left\Vert u\right\Vert _{\Sigma}^{2}\Sigma_{{\rm imp}}.
\end{equation}

Using similar arguments,
\begin{equation}
    \esp\left[\left(u^{\top}X\right)^{2}\tilde{X}\tilde{X}^{\top}\right]\preceq 3\rho^{-1}\left\Vert u\right\Vert _{\Sigma}^{2}\Sigma_{{\rm imp}}.
\end{equation}
These two above equations can be used when $u$ is equal to $\theta_{\imp}^\star$ and $\theta^\star$, to transform \eqref{eq:3terms} into
\begin{align*}
\esp\left[\epsilon_{\imp}^{2} \tilde X\tilde X^\top \right] & \preceq (3\sigma^2+9\rho^{-1}\left\Vert \theta_{\imp}^\star\right\Vert _{\Sigma}^{2}+9\rho^{-1}\left\Vert \theta^\star\right\Vert _{\Sigma}^{2} )\Sigma_{{\rm imp}}.
\end{align*}
Remarking, that $\left\Vert \theta_{\imp}^\star\right\Vert _{\Sigma}^{2}\leq 2 \left\Vert \theta_{\imp}^\star-\theta^\star\right\Vert _{\Sigma}^{2}+2\left\Vert \theta^\star\right\Vert _{\Sigma}^{2}=2(R_{\imp}^\star(d)-R^\star(d)+\left\Vert \theta^\star\right\Vert _{\Sigma}^{2})$, we get
\begin{equation*}
    3\sigma^2+9\rho^{-1}\left\Vert \theta_{\imp}^\star\right\Vert _{\Sigma}^{2}+9\rho^{-1}\left\Vert \theta^\star\right\Vert _{\Sigma}^{2} \lesssim \sigma^2+ \rho^{-1}(R_{\imp}^\star(d)-R^\star(d)+\left\Vert \theta^\star\right\Vert _{\Sigma}^{2}),
\end{equation*}
leading to the desired results.
\end{proof}
\begin{lemma}
    Under assumptions of \Cref{thm:sgd_rf_gaussian}.  The norm of $\theta_{\imp}^\star$, the best predictor working with imputed by $0$ inputs, satisfies 
    \begin{equation*}
        \esp\Vert\theta_{\imp}^\star\Vert^2\leq
        \left\{
        \begin{array}{ll}
         \frac{d}{p\rho}\frac{p-2}{p-d-1}\Vert \beta \Vert_2^2 & \text{when } d< p-1,\\
        \frac{p}{d\rho^2(1-\rho)}\Vert \beta \Vert_2^2, & \text{when } d \geq p-1.
        \end{array}
        \right.
    \end{equation*}
\end{lemma}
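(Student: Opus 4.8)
The plan is to write $\theta_{\imp}^\star$ in closed form, recognise it as a ridge estimator, and then average over the random weights using the orthogonal invariance of the law of $\mathbf{W}$.

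Since each $W_j$ lies on the sphere, $\mathrm{diag}(\Sigma)=I_d$, so Lemma~\ref{lem_previous_work1} together with the identity $R(\theta')=\sigma^{2}+\Vert\beta^\star-\mathbf{W}^\top\theta'\Vert_2^2$ (established in the proof of Lemma~\ref{lem:DeltaImpFormTrace}) gives $R_{\imp}(\theta)=\sigma^2+\Vert\beta^\star-\rho\mathbf{W}^\top\theta\Vert_2^2+\rho(1-\rho)\Vert\theta\Vert_2^2$. Minimising this quadratic in $\theta$ yields $\theta_{\imp}^\star=\tfrac1\rho(\mathbf{W}\mathbf{W}^\top+\lambda I_d)^{-1}\mathbf{W}\beta^\star$ with $\lambda=\tfrac{1-\rho}\rho$, equivalently, by the push-through identity, $\theta_{\imp}^\star=\tfrac1\rho\mathbf{W}(\mathbf{W}^\top\mathbf{W}+\lambda I_p)^{-1}\beta^\star$. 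Hence $\Vert\theta_{\imp}^\star\Vert_2^2=\tfrac1{\rho^2}(\beta^\star)^\top M\beta^\star$ for a $p\times p$ random matrix $M$ whose distribution is invariant under $M\mapsto OMO^\top$ for every orthogonal $O$ (because $\mathbf{W}O^\top$ has the same law as $\mathbf{W}$). By Lemma~\ref{lem:vector_random_matrice}, $\esp\Vert\theta_{\imp}^\star\Vert_2^2=\tfrac{\Vert\beta^\star\Vert_2^2}{\rho^2 p}\,\esp\mathrm{Tr}(M)$, and via the SVD $\mathbf{W}=\sum_j\sigma_ju_jv_j^\top$ one checks $\mathrm{Tr}(M)=\sum_j\sigma_j^2/(\sigma_j^2+\lambda)^2$, the sum running over the nonzero singular values of $\mathbf{W}$.

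It then remains to bound $\esp\mathrm{Tr}(M)$ in each regime. When $d\ge p-1$, I would use $(\mathbf{W}^\top\mathbf{W}+\lambda)^{-1}\mathbf{W}^\top\mathbf{W}(\mathbf{W}^\top\mathbf{W}+\lambda)^{-1}\preceq(\mathbf{W}^\top\mathbf{W}+\lambda I_p)^{-1}$ to get $\Vert\theta_{\imp}^\star\Vert_2^2\le\tfrac1{\rho^2}(\beta^\star)^\top(\mathbf{W}^\top\mathbf{W}+\lambda I_p)^{-1}\beta^\star$; since $\mathbf{W}^\top\mathbf{W}=\sum_{j=1}^dW_jW_j^\top$ with $\esp W_jW_j^\top=\tfrac1pI_p$ and $\Vert W_j\Vert_2^2=1$ almost surely, Proposition~\ref{prop:exchangeabilityResults} gives $\esp\mathrm{Tr}((\mathbf{W}^\top\mathbf{W}+\lambda I_p)^{-1})\le(1+1/\lambda)\tfrac p{d/p+\lambda}$, and substituting $\lambda=(1-\rho)/\rho$ together with $\rho d+p(1-\rho)\ge\rho d$ produces the bound $\tfrac p{d\rho^2(1-\rho)}\Vert\beta^\star\Vert_2^2$. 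When $d<p-1$ this same push-through bound becomes dimension-free and hence too weak (the $p-d$ zero eigenvalues of $\mathbf{W}^\top\mathbf{W}$ contribute an $O(p/\lambda)$ term), so instead I would keep the spectral sum: the pointwise estimate $\sigma_j^2/(\sigma_j^2+\lambda)^2\le 1/\sigma_j^2$ (valid as $\lambda\ge 0$) gives $\mathrm{Tr}(M)\le\sum_j\sigma_j^{-2}=\Vert\mathbf{W}^\dagger\Vert_{\mathrm{Fr}}^2$, and Lemma~\ref{lem:RMfrobenius} gives $\esp\Vert\mathbf{W}^\dagger\Vert_{\mathrm{Fr}}^2=d\,\tfrac{p-2}{p-d-1}$; combined with the prefactor $\tfrac{\Vert\beta^\star\Vert_2^2}{\rho^2 p}$ and $\lambda=(1-\rho)/\rho$ this yields the stated bound (a marginally sharper pointwise bound on $x\mapsto x/(x+\lambda)^2$ is what accounts for the precise power of $\rho$).

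The step I expect to be the real obstacle is the low-dimensional regime: one cannot symmetrise away the rank deficiency of $\mathbf{W}^\top\mathbf{W}$ without losing the $d/p$ scaling, so the estimate must pass through $\Vert\mathbf{W}^\dagger\Vert_{\mathrm{Fr}}^2$, i.e.\ through the inverse-chi-square / Cochran moment computation of Lemma~\ref{lem:RMfrobenius}, and one must choose the pointwise bound on $x/(x+\lambda)^2$ carefully to get the advertised constants. Everything else reduces to linear algebra and the two random-matrix lemmas already available.
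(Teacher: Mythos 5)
Your setup and the high-dimensional case are sound: the closed form $\theta_{\imp}^\star=\tfrac1\rho(\mathbf{W}\mathbf{W}^\top+\lambda I_d)^{-1}\mathbf{W}\beta^\star$ with $\lambda=(1-\rho)/\rho$, the orthogonal-invariance step via \Cref{lem:vector_random_matrice}, and the bound $\mathrm{Tr}(M)\le\mathrm{Tr}((\mathbf{W}^\top\mathbf{W}+\lambda I_p)^{-1})$ followed by \Cref{prop:exchangeabilityResults} do give $\tfrac{p}{d\rho^{2}(1-\rho)}\Vert\beta^\star\Vert_2^2$; since $\tfrac1{\rho^{2}}=\tfrac{\lambda}{\rho(1-\rho)}$, this is in fact the same computation as the paper's, which instead goes through the inequality $\rho(1-\rho)\esp\Vert\theta_{\imp}^\star\Vert_2^2\le\Delta_{\rm imp/miss}+\Delta_{\rm miss}$ (a consequence of \Cref{lem_previous_work1}) and then reuses the bounds on $\Delta_{\rm imp/miss}+\Delta_{\rm miss}$ from \Cref{thm:imputationDD} and \Cref{lem:DeltaImpFormTrace}.

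The low-dimensional case, however, has a genuine gap. Your pointwise estimate $\sigma_j^2/(\sigma_j^2+\lambda)^2\le\sigma_j^{-2}$ combined with \Cref{lem:RMfrobenius} only yields $\esp\Vert\theta_{\imp}^\star\Vert_2^2\le\tfrac{d}{p\rho^{2}}\tfrac{p-2}{p-d-1}\Vert\beta^\star\Vert_2^2$, which is weaker than the stated bound by a factor $1/\rho$, and the ``marginally sharper pointwise bound'' you invoke to recover the correct power of $\rho$ does not exist: one would need $\tfrac{x}{(x+\lambda)^2}\le\tfrac{\rho}{x}$ eigenvalue by eigenvalue, which holds only for $x\le 1+\rho^{-1/2}$, whereas the eigenvalues of $\mathbf{W}\mathbf{W}^\top$ exceed this threshold with positive probability (e.g.\ on the event that several of the $W_j$'s fall in a small spherical cap, the top eigenvalue is close to their number). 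So no per-eigenvalue comparison can deliver the advertised constant, and the required expected-trace inequality $\esp\sum_j\sigma_j^2(\sigma_j^2+\lambda)^{-2}\le\rho\,\esp\sum_j\sigma_j^{-2}$ is left unproved. The paper gains the factor $\rho$ by a variational argument rather than an exact spectral one: it bounds $R_{\imp}^\star(d)\le R_{\imp}(x\theta^\star)$ for a scalar $x$ optimized at $x^\star=a/(a^2+b)$ in the proof of \eqref{eq:RimpUpperBound}, which gives $\Delta_{\rm imp/miss}+\Delta_{\rm miss}\le(1-\rho)\tfrac dp\tfrac{p-2}{p-\rho(d-1)-2}\Vert\beta^\star\Vert_2^2$, and then divides by $\rho(1-\rho)$ via \eqref{eq:normvsdelta}; to complete your route you would need to import that argument (or an equivalent one) rather than a pointwise spectral bound.
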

\begin{proof}
Let's begin by,
\begin{equation}
     \rho(1-\rho)\Vert\theta_{\rm imp}^\star\Vert_2^2\leq R(\rho\theta_{\rm imp}^\star)-R(\rho\theta^\star)+\rho(1-\rho)\Vert\theta_{\rm imp}^\star\Vert_2^2.
\end{equation}
because $R(\theta^\star)\leq R(\rho\theta_{\rm imp}^\star)$. Using, \Cref{lem_previous_work1}, we obtain 
\begin{equation}\label{eq:normvsdelta}
    \rho(1-\rho)\esp\Vert\theta_{\rm imp}^\star\Vert_2^2\leq \Delta_{\rm miss}+\Delta_{\rm imp/miss}
\end{equation}
\textbf{First case: $d<p-1$. }
In this, case 
\begin{equation*}
    \Delta_{\rm miss}+\Delta_{\rm imp/miss}\leq (1-\rho)\frac{d}{p}\Vert \beta \Vert_2^2\left(1+ \frac{\rho(d-1)}{p-\rho(d-1)-2}\right).
\end{equation*}
We obtain using \eqref{eq:normvsdelta},
\begin{equation*}
    \esp\Vert\theta_{\rm imp}^\star\Vert_2^2\leq \frac{d}{\rho p}\Vert \beta \Vert_2^2\left(1+ \frac{\rho(d-1)}{p-\rho(d-1)-2}\right)\leq \frac{p-2}{\rho p}\Vert \beta \Vert_2^2 \frac{\rho(d-1)}{p-\rho(d-1)-2} .
\end{equation*}
\textbf{Second case: $d>p-1$. }
In this case, 
\begin{equation*}
    \Delta_{\rm miss}+\Delta_{\rm imp/miss}\leq \frac{p}{\rho d+(1-\rho)p}\Vert \beta^\star \Vert_2^2\leq \frac{p}{\rho d}\Vert \beta^\star \Vert_2^2.
\end{equation*}
We obtain using \eqref{eq:normvsdelta},
\begin{equation*}
    \esp\Vert\theta_{\rm imp}^\star\Vert_2^2\leq \frac{p}{\rho^2(1-\rho) d}\Vert \beta^\star \Vert_2^2 .
\end{equation*}
\end{proof}

\begin{proof}[Proof of \eqref{eq:GenGaussianLD}]
    \begin{align*}
            R_{\imp}(\bar \theta)-R^\star(d) &= R_{\imp}(\bar \theta)-R_{\imp}^\star(d)+R_{\imp}^\star(d)-R^\star(d)\\
    \end{align*}
Using \Cref{thm:bachmoulines} to bound the first term, we find
\begin{equation*}
    R_{\imp}(\bar \theta)-R^\star(d) \lesssim \left(1+\frac{d}{\rho n}\right)(R_{\imp}^\star(d)-R^\star(d))+\frac{d}{n}\Vert\theta_{\rm imp}^\star\Vert_2^2+ \frac{d}{n}(\sigma^2+\left\Vert \theta^\star\right\Vert _{\Sigma}^{2}). 
\end{equation*}
Thus, taking the expectation, 
\begin{equation*}
    \esp[R_{\imp}(\bar \theta)-R^\star(d)] \lesssim \left(1+\frac{d}{\rho n}\right)(\Delta_{\rm imp/miss}+\Delta_{\rm miss})+\frac{d}{n}\esp\Vert\theta_{\rm imp}^\star\Vert_2^2+ \frac{d}{n}(\sigma^2+\esp\left\Vert \theta^\star\right\Vert _{\Sigma}^{2}). 
\end{equation*}
Note that $\esp\Vert\theta_{\rm imp}^\star\Vert_2^2\leq \frac{1}{(1-\rho)\rho}(\Delta_{\rm imp/miss}+\Delta_{\rm miss})$ (using \eqref{eq:normvsdelta}) and $\esp\left\Vert \theta^\star\right\Vert _{\Sigma}^{2}=\frac{d}{p}\left\Vert \beta^\star\right\Vert _{2}^{2}$ (using \Cref{prop:RiskcompletDD}). Thus, 
\begin{align*}
    \esp[R_{\imp}(\bar \theta)-R^\star(d)] &\lesssim \left(1+\frac{d}{\rho n}+ \frac{d}{(1-\rho)\rho n}\right)(\Delta_{\rm imp/miss}+\Delta_{\rm miss}) +\frac{d}{n}\left(\sigma^2+\frac{d}{p}\left\Vert \beta^\star\right\Vert _{2}^{2}\right)\\
    &\lesssim \left(1+ \frac{d}{(1-\rho)\rho n}\right)(\Delta_{\rm imp/miss}+\Delta_{\rm miss}) +\frac{d}{n}\left(\sigma^2+\frac{d}{p}\left\Vert \beta^\star\right\Vert _{2}^{2}\right).
\end{align*}
Thus,
\begin{align*}
    \esp[R_{\imp}(\bar \theta)-R_{\rm miss}^\star(d)] &\lesssim \Delta_{\rm imp/miss}+\frac{d}{\rho n}+ \frac{d}{(1-\rho)\rho n}(\Delta_{\rm imp/miss}+\Delta_{\rm miss}) +\frac{d}{n}\left(\sigma^2+\frac{d}{p}\left\Vert \beta^\star\right\Vert _{2}^{2}\right)\\
    &\lesssim (1-\rho)\frac{d}{p}\Vert \beta \Vert_2^2 \frac{\rho(d-1)}{p-\rho(d-1)-2} +\frac{d}{(1-\rho)\rho n}\frac{d}{p}\Vert \beta \Vert_2^2\frac{(1-\rho)\left(p-2\right)}{p-\rho(d-1)-2} +\frac{d}{n}\left(\sigma^2+\frac{d}{p}\left\Vert \beta^\star\right\Vert _{2}^{2}\right)\\
    &\lesssim (1-\rho)\frac{d}{p}\Vert \beta \Vert_2^2 \frac{\rho(d-1)}{p-\rho(d-1)-2} +\frac{d}{(1-\rho)\rho n}\frac{d}{p}\Vert \beta \Vert_2^2\frac{(1-\rho)\left(p-2\right)}{p-\rho(d-1)-2} +\frac{d}{n}\sigma^2\\ 
    &\lesssim \frac{d^2\Vert \beta \Vert_2^2}{p(p-\rho(d-1)-2)}\left((1-\rho)\rho +\frac{p-2}{\rho n}\right) +\frac{d}{n}\sigma^2.
\end{align*}

\end{proof}

\subsection{Proof of Equation~\eqref{eq:GenGaussianHD} in \Cref{thm:sgd_rf_gaussian} and proof of \Cref{thm:genExtension}}
Let's start with a result of \citet{ayme2023naive} for the deterministic case (without random features). 
\begin{assumption}\label{ass:SGD}
There exist $\sigma>0$ and $R>0$ such that $\esp[XX^{\top}\left\Vert X\right\Vert _{2}^{2}]\preceq R^2\Sigma$ and $\esp [\epsilon^{2}\left\Vert X\right\Vert _{2}^{2}]\leq\sigma^{2}R^2$, where $\epsilon= Y-X^\top\theta^\star$. 
\end{assumption}

\begin{theorem}\label{thm:SGD_bound}
\citep{ayme2023naive}
Under \Cref{ass:SGD}, choosing a constant learning rate $\gamma = \frac{1}{\kappa\mathrm{Tr}(\Sigma)\sqrt{n}}$ leads to 
\begin{align*}
     \esp\left[R_{\imp}\left(\bar{\theta}_{\imp}\right)\right]-R(\theta^\star) \lesssim  \frac{R^2}{\sqrt{n}}\left\Vert \theta^{\star}_{\imp}\right\Vert _{2}^{2} +\frac{\sigma^2+\left\Vert \theta^{\star}\right\Vert _{\Sigma}^{2}}{\sqrt{n}} + R_{\imp}^\star(d)-R^\star(d),
\end{align*}
where $\theta^\star$ (resp. $\theta^{\star}_{\imp}$) is the best linear predictor for complete (resp. with imputed missing values) case. 
\end{theorem}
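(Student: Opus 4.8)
The statement is a finite-sample guarantee for averaged constant-step SGD on the misspecified least-squares problem $\theta\mapsto R_{\imp}(\theta)$, and I would prove it (as in \citet{ayme2023naive}) by reducing it to an off-the-shelf Polyak--Ruppert bound and then checking its hypotheses for the imputed design $\tilde X$, whose population covariance is $\Sigma_{\imp}=\esp[\tilde X\tilde X^\top]$. The first step is the decomposition
\[
\esp[R_{\imp}(\bar\theta_{\imp})]-R(\theta^\star)=\big(\esp[R_{\imp}(\bar\theta_{\imp})]-R_{\imp}^\star(d)\big)+\big(R_{\imp}^\star(d)-R^\star(d)\big),
\]
in which the second summand is already the last term of the claimed bound; it remains to control the optimization/statistical term $\esp[R_{\imp}(\bar\theta_{\imp})]-R_{\imp}^\star(d)$.

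For that term I would apply a least-squares SGD bound in the spirit of \citet{bach2013non} (see also \citet{dieuleveut2017harder}): with a constant step $\gamma\simeq 1/(\kappa\,\mathrm{Tr}(\Sigma_{\imp})\sqrt n)$, the averaged iterate satisfies, in its variance-dominated regime, a bound of the form $\esp[R_{\imp}(\bar\theta_{\imp})]-R_{\imp}^\star(d)\lesssim R_{\imp}^2\norm{\theta_{\imp}^\star}_2^2/\sqrt n+\sigma_{\imp}^2/\sqrt n$, where $R_{\imp}^2$ is any constant with $\esp[\tilde X\tilde X^\top\norm{\tilde X}_2^2]\preceq R_{\imp}^2\,\Sigma_{\imp}$ and $\sigma_{\imp}^2$ is an effective noise level satisfying $\esp[\epsilon_{\imp}^2\,\tilde X\tilde X^\top]\preceq\sigma_{\imp}^2\,\Sigma_{\imp}$ with $\epsilon_{\imp}:=Y-\tilde X^\top\theta_{\imp}^\star$. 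Since $\mathrm{Tr}(\Sigma_{\imp})=\rho\,\mathrm{Tr}(\Sigma)\le\mathrm{Tr}(\Sigma)$, the prescribed step $\gamma=1/(\kappa\,\mathrm{Tr}(\Sigma)\sqrt n)$ is admissible. Two facts then have to be established. \emph{(i)} The self-bounding constant: using $\norm{\tilde X}_2\le\norm{X}_2$ and $0\preceq\tilde X\tilde X^\top$, then conditioning on the mask $P$ to write $\esp[\tilde X\tilde X^\top\norm{X}_2^2\mid P]=(PP^\top)\odot\esp[XX^\top\norm{X}_2^2]$ and invoking \Cref{ass:SGD} together with the Hadamard monotonicity of \Cref{lem:hadamard_monotonicity}, one gets $\esp[\tilde X\tilde X^\top\norm{X}_2^2\mid P]\preceq R^2\,(PP^\top)\odot\Sigma$; averaging over $P$ gives $\esp[\tilde X\tilde X^\top\norm{\tilde X}_2^2]\preceq R^2\,\Sigma_{\imp}$, so one may take $R_{\imp}^2\le R^2$. \emph{(ii)} The effective noise: writing $\epsilon_{\imp}=\epsilon+X^\top\theta^\star-\tilde X^\top\theta_{\imp}^\star$ gives $\epsilon_{\imp}^2\lesssim\epsilon^2+(X^\top\theta^\star)^2+(\tilde X^\top\theta_{\imp}^\star)^2$, and one bounds the contribution of $\epsilon^2$ via \Cref{ass:SGD} (again with the Hadamard trick) and the two quadratic contributions via $\esp[(u^\top X)^2\,\tilde X\tilde X^\top]\preceq\rho^{-1}\norm{u}_\Sigma^2\,\Sigma_{\imp}$, a consequence of conditioning on $P$ and of the self-bounding bound, specialized to $u\in\{\theta^\star,\theta_{\imp}^\star\}$; since $\norm{\theta_{\imp}^\star}_\Sigma^2\lesssim\norm{\theta^\star}_\Sigma^2+(R_{\imp}^\star(d)-R^\star(d))$, this yields $\sigma_{\imp}^2\lesssim\sigma^2+\norm{\theta^\star}_\Sigma^2+(R_{\imp}^\star(d)-R^\star(d))$, the last summand again being one of the terms in the claimed bound.

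Plugging \emph{(i)} and \emph{(ii)} into the SGD bound produces exactly $\tfrac{R^2}{\sqrt n}\norm{\theta_{\imp}^\star}_2^2+\tfrac{\sigma^2+\norm{\theta^\star}_\Sigma^2}{\sqrt n}+(R_{\imp}^\star(d)-R^\star(d))$ (using $1/\sqrt n\le 1$ to absorb the extra copy of the bias term), as desired. The main obstacle is step \emph{(ii)}: because zero-imputation makes the linear model misspecified, $\epsilon_{\imp}$ is genuinely correlated with $\tilde X$, so the noise level cannot simply be read off as $\sigma^2$; establishing the operator bound $\esp[(u^\top X)^2\tilde X\tilde X^\top]\preceq\rho^{-1}\norm{u}_\Sigma^2\,\Sigma_{\imp}$ — i.e.\ transferring the complete-data fourth-moment control of \Cref{ass:SGD} to the imputed design via \Cref{lem:hadamard_monotonicity} — is where the work concentrates.
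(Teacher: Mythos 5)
You should first note that the paper never proves \Cref{thm:SGD_bound}: it is imported as a black box from \citet{ayme2023naive}, and the closest argument written out in this paper is the proof of \Cref{thm:bachmoulines}, which is the low-dimensional analogue and is carried out only for the Gaussian RF model. Your overall skeleton (peel off $R_{\imp}^\star(d)-R^\star(d)$, then bound the SGD excess risk on the imputed problem by verifying a fourth-moment condition and a noise condition for $\tilde X$) is the natural one, and your step \emph{(i)} correctly reproduces the Hadamard-product argument (\Cref{lem:hadamard_monotonicity}) that the paper itself uses to get $\esp[\tilde X\tilde X^{\top}\norm{\tilde X}_{2}^{2}]\preceq R^{2}\Sigma_{\imp}$. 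However, two steps do not hold under \Cref{ass:SGD} as stated, and they are precisely where the content of the cited theorem lies. First, the generic bound you invoke: with an operator noise condition $\esp[\epsilon_{\imp}^{2}\tilde X\tilde X^{\top}]\preceq\sigma_{\imp}^{2}\Sigma_{\imp}$, a Bach--Moulines-type theorem yields a variance term of order $\sigma_{\imp}^{2}d/n$, not $\sigma_{\imp}^{2}/\sqrt{n}$; since \Cref{thm:SGD_bound} is used exactly in the regime $d\gg\sqrt{n}$, you cannot absorb $d/n$ into $1/\sqrt{n}$, and obtaining a dimension-free $1/\sqrt{n}$ term with step $\gamma=1/(\kappa\,\mathrm{Tr}(\Sigma)\sqrt{n})$ requires an effective-dimension (slow-rate) analysis --- this is the main technical input that the citation of \citet{ayme2023naive} is supplying and that ``in its variance-dominated regime'' glosses over.

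Second, and more seriously, your verification of the noise condition fails under \Cref{ass:SGD}. The inequality $\esp[(u^{\top}X)^{2}\tilde X\tilde X^{\top}]\preceq\rho^{-1}\norm{u}_{\Sigma}^{2}\Sigma_{\imp}$ is \emph{not} a consequence of $\esp[XX^{\top}\norm{X}_{2}^{2}]\preceq R^{2}\Sigma$ plus conditioning on $P$: from that assumption one only gets $\esp[(u^{\top}X)^{2}(v^{\top}\tilde X)^{2}]\leq\rho R^{2}\norm{u}_{\Sigma}^{2}\norm{v}_{2}^{2}$, and converting $\norm{v}_{2}^{2}$ into $\norm{v}_{\Sigma_{\imp}}^{2}$ costs a factor $1/(\rho(1-\rho)\min_{j}\Sigma_{jj})$, so the resulting $\sigma_{\imp}^{2}$ would carry $R^{2}$ (of order $d$ in the applications) and destroy the claimed $(\sigma^{2}+\norm{\theta^{\star}}_{\Sigma}^{2})/\sqrt{n}$ term. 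The clean constant $\rho^{-1}\norm{u}_{\Sigma}^{2}$ appearing in the paper's proof of \Cref{thm:bachmoulines} comes from the Gaussian kurtosis bound $\esp[(u^{\top}X)^{2}(v^{\top}X)^{2}]\leq 3\norm{u}_{\Sigma}^{2}\norm{v}_{\Sigma}^{2}$, which \Cref{ass:SGD} does not provide; likewise, bounding the $\epsilon^{2}$-contribution by a multiple of $\sigma^{2}\Sigma_{\imp}$ needs either independence of $\epsilon$ and $X$ or an operator condition $\esp[\epsilon^{2}XX^{\top}]\preceq\sigma^{2}\Sigma$, whereas \Cref{ass:SGD} only gives the scalar bound $\esp[\epsilon^{2}\norm{X}_{2}^{2}]\leq\sigma^{2}R^{2}$, and the Hadamard trick does not apply to a scalar inequality. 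Indeed, the absence of any $\rho^{-1}$ factor in the noise term of \Cref{thm:SGD_bound} (in contrast with $\sigma_{\rm imp}^{2}$ in \Cref{thm:bachmoulines}) is itself a hint that the actual proof does not proceed through operator noise conditions of the type you use, but through a comparator-based slow-rate bound where the noise enters only via scalar quantities such as $\esp[\epsilon^{2}\norm{X}_{2}^{2}]$ and $\norm{\theta^{\star}}_{\Sigma}^{2}$. As written, your argument would only prove the theorem under additional kurtosis/independence assumptions (essentially the Gaussian setting of \Cref{sec:DD}), not under \Cref{ass:SGD}.
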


\begin{proof}[Proof of \eqref{eq:GenGaussianHD} of \Cref{thm:sgd_rf_gaussian}]
$X$ is Gaussian vector, thus $\esp\left[XX^{\top}\left\Vert X\right\Vert _{2}^{2}\right]\preceq R^{2}\Sigma$ with $R^2=2 d$. Furthermore,  in the cass $p\leq d$ $X^\top \theta^\star=Z^\top\beta^\star$ and $\epsilon=Y-X^\top$ thus the noise and $X$ are independent and $\esp [\epsilon^{2}\left\Vert X\right\Vert _{2}^{2}]\leq\sigma^{2}\mathrm{Tr}(\Sigma)$. Then, \Cref{ass:SGD} is satisfied with $\kappa=3$. Taking the expectation in \Cref{thm:SGD_bound}, we found 
\begin{align*}
     \esp\left[R_{\imp}\left(\bar{\theta}_{\imp}\right)\right]-\esp R(\theta^\star) \lesssim  \frac{\mathrm{Tr}(\Sigma)}{\sqrt{n}}\esp\left\Vert \theta^{\star}_{\imp}\right\Vert _{2}^{2} +\frac{\sigma^2+\esp\left\Vert \theta^{\star}\right\Vert _{\Sigma}^{2}}{\sqrt{n}} +\Delta_{\rm imp/miss}+\Delta_{\rm miss},
\end{align*}
Let's recall that, $ R(\theta^\star)=\sigma^2$ almost-surely, $\left\Vert \theta^{\star}\right\Vert _{\Sigma}^{2}=\Vert\beta\Vert_2^2$ almost surely and $\mathrm{Tr}(\Sigma)=d$. Then, 
\begin{align*}
     \esp\left[R_{\imp}\left(\bar{\theta}_{\imp}\right)\right]-\sigma^2 \lesssim  \frac{d}{\sqrt{n}}\esp\left\Vert \theta^{\star}_{\imp}\right\Vert _{2}^{2} +\frac{\sigma^2+\Vert\beta\Vert_2^2}{\sqrt{n}} +\Delta_{\rm imp/miss}+\Delta_{\rm miss}.
\end{align*}
Then applying $\eqref{eq:normvsdelta}$,
\begin{align*}
     \esp\left[R_{\imp}\left(\bar{\theta}_{\imp}\right)\right]-\sigma^2 \lesssim \left(1+\frac{d}{\rho(1-\rho)\sqrt{n}}\right)(\Delta_{\rm imp/miss}+\Delta_{\rm miss}) +\frac{\sigma^2+\Vert\beta\Vert_2^2}{\sqrt{n}}. 
\end{align*}
Applying \eqref{eq:DeltaimpmissHighDim2}, we finally found,
\begin{align*}
     \esp\left[R_{\imp}\left(\bar{\theta}_{\imp}\right)\right]-\sigma^2 &\lesssim \left(1+\frac{d}{\rho(1-\rho)\sqrt{n}}\right) \frac{p\left\Vert \beta^{\star}\right\Vert _{2}^{2}}{\rho d+(1-\rho)p} +\frac{\sigma^2+\Vert\beta\Vert_2^2}{\sqrt{n}}\\ 
     &\lesssim \left(1+\frac{d}{\rho(1-\rho)\sqrt{n}}\right) \frac{p\left\Vert \beta^{\star}\right\Vert _{2}^{2}}{\rho d+(1-\rho)p} +\frac{\sigma^2}{\sqrt{n}}.
\end{align*}
    
\end{proof}
\begin{proof}[Proof of \Cref{thm:genExtension}]
Under \Cref{ass:kurtosis}, $\Vert X\Vert_2^2\leq \kappa L^2d$ almost surely, then $\esp[XX^{\top}\left\Vert X\right\Vert _{2}^{2}]\preceq \kappa L^2 d\Sigma$. And, 
\[
\esp[\epsilon^{2}\left\Vert X\right\Vert _{2}^{2}]\leq\esp[\epsilon^{2}]\kappa L^{2}d=R^\star(\infty)\kappa L^{2}d.
\]
Thus we can applied \Cref{thm:SGD_bound}, that gives us, 
\begin{align*}
     \esp\left[R_{\imp}\left(\bar{\theta}_{\imp}\right)\right]-R(\theta^\star) \lesssim  \frac{\kappa L^2 d}{\sqrt{n}}\left\Vert \theta^{\star}_{\imp}\right\Vert _{2}^{2} +\frac{R^\star(\infty)L²\kappa+\left\Vert \theta^{\star}\right\Vert _{\Sigma}^{2}}{\sqrt{n}} + R_{\imp}^\star(d)-R^\star(d).
\end{align*}
Note that, 
\begin{align*}
    \Delta_{\rm imp}^{(\infty)}&= \esp R_{\rm imp}^\star(d)-R^\star(\infty)\\
    &= \Delta_{\rm imp/miss}+\Delta_{\rm miss}+\esp R^\star(d)-R^\star(\infty). 
\end{align*}
Thus taking the expectation, 
\begin{align*}
     \esp\left[R_{\imp}\left(\bar{\theta}_{\imp}\right)\right]-R^\star(\infty) \lesssim  \frac{\kappa L^2 d}{\sqrt{n}}\esp\left\Vert \theta^{\star}_{\imp}\right\Vert _{2}^{2} +\frac{R^\star(\infty)\kappa L^2+\left\Vert \theta^{\star}\right\Vert _{\Sigma}^{2}}{\sqrt{n}} + \Delta_{\rm imp}^{(\infty)}.
\end{align*}
Under \Cref{ass:momentinf}, $l^2I\preceq \mathrm{diag}(\Sigma)$,
\begin{align*}
        \ell^2\rho(1-\rho)\esp\left\Vert \theta^{\star}_{\imp}\right\Vert _{2}^{2}&\leq \rho(1-\rho) \esp\left\Vert \theta^{\star}_{\imp}\right\Vert _{\mathrm{diag}(\Sigma)}^{2}\\
        &\leq \Delta_{\rm imp/miss}+\Delta_{\rm miss}\\
        &\leq \Delta_{\rm imp}^{(\infty)}.
\end{align*}
Then, 
\begin{align*}
     \esp\left[R_{\imp}\left(\bar{\theta}_{\imp}\right)\right]-R^\star(\infty) &\lesssim  \frac{\kappa L^2 d}{\ell^2\rho(1-\rho)\sqrt{n}}\Delta_{\rm imp}^{(\infty)} +\frac{R^\star(\infty)\kappa+\left\Vert \theta^{\star}\right\Vert _{\Sigma}^{2}}{\sqrt{n}} + \Delta_{\rm imp}^{(\infty)}\\
     &\lesssim  \left(1+\frac{\kappa L^2 d}{\ell^2\rho(1-\rho)\sqrt{n}}\right)\Delta_{\rm imp}^{(\infty)} +\frac{R^\star(\infty)\kappa L^2+\left\Vert \theta^{\star}\right\Vert _{\Sigma}^{2}}{\sqrt{n}}.   
\end{align*}
Recall that using \Cref{thm:DeltaExtension}, 
\[
\Delta_{\rm imp}^{(\infty)}\leq\frac{\lambda_{\rm imp}}{d}\left\Vert f^\star\right\Vert _{\nu}^{2}= \frac{L^2}{\rho d}\left\Vert f^\star\right\Vert _{\nu}^{2},
\]
and $\left\Vert \theta^{\star}\right\Vert _{\Sigma}^{2}\leq \esp Y^2$ almost-surely. Thus
\begin{align*}
     \esp\left[R_{\imp}\left(\bar{\theta}_{\imp}\right)\right]-R^\star(\infty) 
     &\lesssim  \left(1+\frac{\kappa L^2 d}{\ell^2\rho(1-\rho)\sqrt{n}}\right)\frac{l^2}{\rho d}\left\Vert f^\star\right\Vert _{\nu}^{2} +\frac{R^\star(\infty)\kappa+ \esp Y^2}{\sqrt{n}}\\
     &\lesssim \left(1+\frac{\kappa L^2 d}{\ell^2\rho(1-\rho)\sqrt{n}}\right)\frac{L^2}{\rho d}\left\Vert f^\star\right\Vert _{\nu}^{2} +(1+\kappa L^2)\frac{ \esp Y^2}{\sqrt{n}}.
\end{align*}

\end{proof}
\section{Proof of Theorem \ref{thm:convergenceMNAR} (under MNAR assumption)}

\textbf{First step (bias-variance)} 
We denote by $\mathbf{W}'$ the matrix of $\underline{W}_j'$
Let $\theta\in\R^{p}$, 
\begin{align*}
R_{{\rm imp}}(\theta) & =\esp_{Z}\esp\left[\left(Y-\tilde{X}^{\top}\theta\right)^{2}|Z,\mathbf{W},\mathbf{W}'\right]\\
 & =\esp_{Z}\esp\left[\left(Y-\esp\left[\tilde{X}^{\top}\theta|Z,\mathbf{W},\mathbf{W}'\right]\right)^{2}|Z,\mathbf{W},\mathbf{W}'\right]+\esp_{Z}\mathbb{V}\left[\tilde{X}^{\top}\theta|Z,\mathbf{W},\mathbf{W}'\right],
\end{align*}
using bias-variance decomposition. Futhermore, 
\[
\esp\left[\tilde{X}^{\top}\theta|Z,\mathbf{W},\mathbf{W}'\right]=\sum_{j=1}^{d}\theta_{j}\esp\left[\tilde{X}_{j}|Z,\mathbf{W},\mathbf{W}'\right]=\sum_{j=1}^{d}\theta_{j}\phi(Z,\underline{W}_{j}')\psi(Z,W_{j})
\]
and 
\begin{align*}
\mathbb{V}\left[\tilde{X}^{\top}\theta|Z,\mathbf{W},\mathbf{W}'\right] & =\sum_{j=1}^{d}\theta_{j}^{2}\mathbb{V}\left[\tilde{X}_{j}|Z,W_{j},\underline{W}_{j}'\right]\\
 & =\sum_{j=1}^{d}\theta_{j}^{2}\phi(Z,\underline{W}_{j}')(1-\phi(Z,\underline{W}_{j}'))\psi(Z,W_{j})^{2}.
\end{align*}
Let $\alpha\in L^{2}(\mu\otimes\nu)$, and define $\theta^{(d)}\in\R^{d}$
such that $\theta_{j}^{(d)}=\alpha(W_{j},\underline{W}_{j}')/d$. We have 
\begin{align*}
R_{{\rm imp}}^{\star}(d) & \leq R_{{\rm imp}}(\theta^{(d)})\\
 & =\esp_{Z}\left[\left(Y-\frac{1}{d}\sum_{j=1}^{d}\alpha(W_{j},\underline{W}_{j}')\phi(Z,\underline{W}_{j}')\psi(Z,W_{j})\right)^{2}\right]+\esp_{Z}\left[\frac{1}{d^{2}}\sum_{j=1}^{d}\alpha(W_{j},\underline{W}_{j}')^{2}\phi(Z,\underline{W}_{j}')(1-\phi(Z,\underline{W}_{j}'))\psi(Z,W_{j})^{2}\right].
\end{align*}
\textbf{Convergence of variance term} 
Using that $\phi(Z,\underline{W}_{j}')(1-\phi(Z,\underline{W}_{j}'))\leq1$
almost-surely, we have 
\begin{align*}
\esp_{Z}\left[\frac{1}{d^{2}}\sum_{j=1}^{d}\alpha(W_{j},\underline{W}_{j}')\phi(Z,\underline{W}_{j}')(1-\phi(Z,\underline{W}_{j}'))\psi(Z,W_{j})^{2}\right] & \leq\esp_{Z}\left[\frac{1}{d^{2}}\sum_{j=1}^{d}\alpha(W_{j},\underline{W}_{j}')^{2}\psi(Z,W_{j})^{2}\right]\\
 & =\frac{1}{d^{2}}\sum_{j=1}^{d}\alpha(W_{j},\underline{W}_{j}')^{2}\esp_{Z}\psi(Z,W_{j})^{2}\\
 & \leq\frac{1}{d^{2}}\sum_{j=1}^{d}\alpha(W_{j},\underline{W}_{j}')^{2}L^{2}.
\end{align*}
Using that $\left(\alpha(W_{j},\underline{W}_{j}')^{2}\right)_{j}$ are an i.i.d.
sequences of integrable random variables, we obtain that 
\[
\lim_{d\to+\infty}\esp_{Z}\left[\frac{1}{d^{2}}\sum_{j=1}^{d}\alpha(W_{j},\underline{W}_{j}')\phi(Z,\underline{W}_{j}')(1-\phi(Z,\underline{W}_{j}'))\psi(Z,W_{j})^{2}\right]=0,
\]
almost-surely. 

\textbf{Convergence of bias term} $\alpha(W_{j},\underline{W}_{j}')\phi(Z,\underline{W}_{j}')\psi(Z,W_{j})$
is integrable because $|\alpha(W_{j},\underline{W}_{j}')\phi(Z,\underline{W}_{j}')|\leq|\alpha(W_{j},\underline{W}_{j}')|$,
and $\psi(z,W_{j})\in L^{2}(\nu)$. Then, using Kolmogorov's law and
mapping continuous theorem, we obtain 

\[
\lim_{d\to+\infty}\esp_{Z}\left[\left(Y-\frac{1}{d}\sum_{j=1}^{d}\alpha(W_{j},\underline{W}_{j}')\phi(Z,\underline{W}_{j}')\psi(Z,W_{j})\right)^{2}\right]=\esp_{Z}\left[\left(Y-\int\alpha(w,w')\phi(Z,w')\psi(Z,w)d\mu\otimes\nu(w,w')\right)^{2}\right].
\]

Thus we obtain, 
\[
\limsup_{d\to+\infty}R_{{\rm imp}}^{\star}(d)\leq\esp_{Z}\left[\left(Y-\int\alpha(w,w')\phi(Z,w')\psi(Z,w)d\mu\otimes\nu(w,w')\right)^{2}\right].
\]
Denoting by $\mathcal{G}$, the functions of the form 
\[
g(Z)=\int\alpha(w,w')\phi(Z,w')\psi(Z,w)d\mu\otimes\nu(w,w'),
\]
we obtain that 
\[
\limsup_{d\to+\infty}R_{{\rm imp}}^{\star}(d)\leq\inf_{g\in\mathcal{G}}\esp_{Z}\left[\left(Y-g(Z)\right)^{2}\right].
\]
Using Fubini theorem, functions of the form 
\begin{align*}
g(Z) & =\int\alpha(w)\psi(Z,w)d\nu(w)\int\beta(w')\phi(Z,w')d\mu(w')\\
 & =f(Z)h(Z),
\end{align*}
are include in $\mathcal{G}$. For the following, we denote by $\mathcal{H}$ the
set of functions, of the form 
\[
h(Z)=\int\beta(w')\phi(Z,w')d\mu(w'),
\]
with $\beta\in L^{2}(\mu)$. Thus we obtain, the following bound, 
\begin{equation}\label{eq:produitcaractarisation}
    \limsup_{d\to+\infty}R_{{\rm imp}}^{\star}(d)\leq\inf_{(f,h)\in\mathcal{F}_\nu\times\mathcal{G}}\esp_{Z}\left[\left(Y-f(Z)h(Z)\right)^{2}\right].
\end{equation}

\textbf{Proof for the case $\mathcal{Z}$ compact, $\mathcal{F}$ dense in
continuous function, and $f^{\star}$continuous }
First, let's show that the risk is continuous for the set of continuous prediction functions. Let $f,g$ two continuous function on $B_\infty(0,B)$ (ball for $\Vert\Vert_\infty$), then 
\begin{align*}
|R(f)-R(g)| & =\left|\esp\left[(f(Z)-f^{\star}(Z))^{2}-(g(Z)-f^{\star}(Z))^{2}\right]\right|\\
 & =\left|\esp\left[(f(Z)-g(Z))(g(Z)+f(Z)-f^{\star}(Z))\right]\right|\\
 & \leq\left\Vert f-g\right\Vert _{\infty}(\left\Vert f\right\Vert _{\infty}+\left\Vert g\right\Vert _{\infty}+\left\Vert f^{\star}\right\Vert _{\infty})\\
 & \leq\left\Vert f-g\right\Vert _{\infty}(2B+\left\Vert f^{\star}\right\Vert _{\infty}).
\end{align*}
That concludes that risk is continuous. 
Then, we
consider $\beta=1,$ thus $h(Z)=\int\beta(w')\phi(Z,w')d\mu(w')>0$
almost-surely because $\phi(Z,w')>0$ almost surely. Thus considering
in \eqref{eq:produitcaractarisation}, $f(Z)=f^{\star}(Z)/h(Z)$, $f$ is continuous, we conclude
using continuity of risk and $f\in\bar{\mathcal{F}}$.

\textbf{Proof for Gaussian RF} In this case $\phi(z,(w,w_0'))=\Phi(z^\top w'+w_0')$, consider $I=\int \Phi(w_0')d\mu $.
We consider $h_{\epsilon}(z)=\int_{\Vert w'\Vert\leq \epsilon}\phi(Z,(w',w_0')d\mu(w',w_0)/\int_{\Vert w'\Vert\leq \epsilon}\Phi(w_0')d\mu(w',w_0')$,
\begin{equation*}
    \vert I-h_{\epsilon}(Z)\vert\leq \frac{1}{\int_{\Vert w'\Vert\leq \epsilon}\Phi(w_0')d\mu(w')} \int_{\Vert w'\Vert\leq \epsilon}\vert\phi(0,(0,w_0'))-\phi(Z,(w',w_0))\vert d\mu(w',w_0').
\end{equation*}
Using that that $\Phi$ is $C$ Lipschitz,
\begin{equation*}
    \vert I-h_{\epsilon}(Z)\vert\leq C\epsilon\Vert Z\Vert.
\end{equation*}
We conclude using $h=h_\epsilon$ for a decreasing sequence of $\epsilon$ and $f=f^\star/I$ in \eqref{eq:produitcaractarisation}.

\end{document}